\newtheorem{prop}{Proposition}[section]
\newtheorem{corollary}[prop]{Corollary}
\newtheorem{lemme}[prop]{Lemma}
\newtheorem{lemma}[prop]{Lemma}
\newtheorem{remark}[prop]{Remark}
\newtheorem{theorem}[prop]{Theorem}
\renewcommand{\geq}{\geqslant}
\def\leq{\leqslant}
\newcommand{\N}{\mathbb{N}}
\newcommand{\Z}{\mathbb{Z}}
\newcommand{\R}{\mathbb{R}}
\def\cal{\mathcal}
\def\1{{\mathbf{1}}}
\def\1{{\mathbf{1}}}
\def\0.5{{\frac{1}{2}}}
\newenvironment{proof}[1]{\begin{trivlist}\item {\it
\bf Proof.}\quad} {\qed\end{trivlist}}
\newcommand{\qed}{\nopagebreak\hspace*{\fill}
{\vrule width6pt height6ptdepth0pt}\par}
\begin{document}
\thispagestyle{empty}

\begin{center}
{\bf\Large Asymptotic behavior of  weighted power variations of  fractional Brownian motion in Brownian time }
\end{center}
\bigskip
\begin{center}
{\bf Raghid Zeineddine\footnote{Research Training Group 2131, Fakult\"at Mathematik, Technische Universit\"at Dortmund, Germany. \\Email: raghid.z@hotmail.com}}
\end{center}

\bigskip
\begin{abstract}
We study the asymptotic behavior of  weighted power variations of  fractional Brownian motion in Brownian time $Z_t:=  X_{Y_t}, \:\: t \geq 0$, where $X$ is a fractional Brownian motion  and $Y$ is an independent Brownian motion. 
\end{abstract}
\textbf{Key words:} Weighted power variations, limit theorem, Malliavin calculus, fractional Brownian motion, fractional Brownian motion in Brownian time.\\
\textbf{MSC 2010:} 60F05; 60G15; 60G22; 60H05; 60H07.

\section{Introduction}
Our aim in this paper is to study the asymptotic behavior of  weighted power variations of the so-called {\it fractional Brownian motion in Brownian time} defined as
\[
Z_t=X_{Y_t}, \quad t\geq 0,
\]
where $X$ is a two-sided fractional Brownian motion, with Hurst parameter $H \in (0,1)$, and $Y$ is a standard (one-sided) Brownian
motion independent of $X$. It is a self-similar process (of order $H/2$) with stationary increments, which is  not Gaussian.
When $H=1/2$, one recovers the celebrated {\it iterated Brownian motion.}

In the present paper we follow, and we are inspired by the previous papers \cite{kh-lewis1}; \cite{NNT}; \cite{NP}; \cite{RZ1}, and our work may be seen a natural follow-up of \cite{NP} and \cite{RZ1}.
  
Let $f:\R \to \R$ be a function belonging to $C_b^\infty$, the class of those functions that are $C^\infty$ and bounded together with their derivatives. Then, for any $t\geq 0$ and any integer $p\geq 1$, the  weighted $p$-variation of $Z$ is defined as 
\begin{eqnarray*}
R_n^{(p)}(t)=\sum_{k=0}^{\lfloor 2^n t\rfloor-1}\frac12\big(f(Z_{k2^{-n}})+f(Z_{(k+1)2^{-n}})\big)(Z_{(k+1)2^{-n}}- Z_{k2^{-n}})^p. 
\end{eqnarray*} 
 After proper normalization we may expect the  convergence (in some sense) to a non-degenerate limit (to be determined) of
\begin{eqnarray}
S_n^{(p)}(t)&=&2^{n\kappa}\sum_{k=0}^{\lfloor 2^n t\rfloor-1}\frac12\big(f(Z_{k2^{-n}})+f(Z_{(k+1)2^{-n}})\big)\big[(Z_{(k+1)2^{-n}}- Z_{k2^{-n}})^p - \notag\\
&& \hspace{7cm}E[(Z_{(k+1)2^{-n}}- Z_{k2^{-n}})^p]\big], \label{S}
\end{eqnarray}
for some $\kappa$ to be discovered. Due to the fact that one cannot separate $X$ from $Y$ inside $Z$ in the definition
of $S_n^{(p)}$, working directly with (\ref{S}) seems to be a  difficult task (see also \cite[ Problem 5.1]{kh-lewis2}).
This is why, following an idea introduced by Khoshnevisan and Lewis \cite{kh-lewis1} in a study of the case $H=1/2$,
we will rather analyze $S_n^{(p)}$ by means of certain stopping times for $Y$. The idea is: by stopping $Y$ as it crosses certain levels, and by sampling $Z$ at these times, one can effectively separate $X$ from $Y$.
To be more specific, let us introduce the following collection of stopping times (with
respect to the natural filtration of $Y$), noted
\begin{equation}
\mathscr{T}_n=\{T_{k,n}: k\geq0\}, \quad n\geq 0, \label{T-N}
\end{equation}
which are in turn expressed in terms of the subsequent hitting
times of a dyadic grid cast on the real axis. More precisely, let
$\mathscr{D}_n= \{j2^{-n/2}:\,j\in\Z\}$, $n\geq 0$, be the dyadic
partition (of $\R$) of order $n/2$. For every $n\geq 0$, the
stopping times $T_{k,n}$, appearing in (\ref{T-N}), are given by
the following recursive definition: $T_{0,n}= 0$, and
\[
T_{k,n}= \inf\big\{s>T_{k-1,n}:\quad
Y(s)\in\mathscr{D}_n\setminus\{Y_{T_{k-1,n}}\}\big\},\quad k\geq 1.
\]
Note that the definition of $T_{k,n}$, and
therefore of $\mathscr{T}_n$, only involves the one-sided Brownian
motion $Y$, and that, for every $n\geq 0$, the discrete stochastic
process
\[
\mathscr{Y}_n=\{Y_{T_{k,n}}:k\geq0\}
\]
defines a simple and symmetric random
walk over $\mathscr{D}_n$.  As shown in
\cite{kh-lewis1}, as $n$ tends to
infinity the collection $\{T_{k,n}:\,1\leq k \leq 2^nt\}$ approximates the
common dyadic partition $\{k2^{-n}:\,1\leq k \leq 2^nt\}$ of order $n$ of the time interval $[0,t]$ (see
\cite[Lemma 2.2]{kh-lewis1} for a precise statement).
Based on this fact, one can introduce the counterpart of (\ref{S}) based on $\mathscr{T}_n$, namely,
\begin{equation*}
\tilde{S}_n^{(p)}(t)=2^{-n\tilde{\kappa}}\sum_{k=0}^{\lfloor 2^n t\rfloor-1}\frac12\big(f(Z_{T_{k,n}})+f(Z_{T_{k+1,n}})\big)\big[\big(2^{\frac{nH}{2}}(Z_{T_{k+1,n}}- Z_{T_{k,n}})\big)^p - \mu_p\big], 
\end{equation*}
for some $\tilde{\kappa}>0$ to be discovered and with $\mu_p:= E[N^p]$, where $N \sim \mathcal{N}(0,1)$. At this stage, it is worthwhile noting that we are dealing with symmetric weighted $p$-variation of $Z$, and symmetry will play an important role in our analysis as we will see in Lemma \ref{lemme-algebric}. 

In the particular case where $H=\frac12$, that is when $Z$ is the {\it iterated Brownian motion}, the asymptotic behavior of $\tilde{S}_n^{(p)}(\cdot) $ has been studied in \cite{NP}. In fact, one can deduce the following two finite dimensional distributions (f.d.d.) convergences in law from \cite[Theorem 1.2]{NP}.
\begin{itemize}
\item[1)] For $f\in C_b^2$ and for any integer $r \geq 1$, we have
\end{itemize}
\begin{eqnarray}
&&\bigg(2^{-\frac{3n}{4}}\sum_{k=0}^{\lfloor 2^n t \rfloor -1} \frac12\big(f(Z_{T_{k,n}})+f(Z_{T_{k+1,n}})\big)\big[\big(2^{\frac{n}{4}}(Z_{T_{k+1,n}}-Z_{T_{k,n}})\big)^{2r} - \mu_{2r}\big]  \bigg)_{ t \geq 0} \notag\\
&&\underset{n\to \infty}{\overset{\rm f.d.d.}{\longrightarrow}} \bigg( \sqrt{\mu_{4r}-\mu_{2r}^2}\int_{-\infty}^{+\infty} f(X_s)L_t^s(Y)dW_s \bigg)_{ t\geq 0},\label{new-NP1}
\end{eqnarray}
where $L_t^s(Y)$ stands for the local time of $Y$ before time $t$ at level $s$, $W$ is a two-sided Brownian motion independent of $(X,Y)$ and  $\int_{-\infty}^{+\infty} f(X_s)L_t^s(Y)dW_s$ is the Wiener-It\^o integral of $f(X_{\cdot})L^{\cdot}_t(Y)$ with respect to $W$.
\begin{itemize}
\item[2)] For $f\in C_b^2$ and for any integer $r \geq 2$,  we have
\end{itemize}
\begin{eqnarray}
&&\bigg( 2^{-\frac{n}{4}}\sum_{k=0}^{\lfloor 2^n t \rfloor -1} \frac12\big(f(Z_{T_{k,n}})+f(Z_{T_{k+1,n}})\big)\big(2^{\frac{n}{4}}(Z_{T_{k+1,n}}-Z_{T_{k,n}})\big)^{2r-1}  \bigg)_{t \geq 0} \notag\\
&&\underset{n\to \infty}{\overset{\rm f.d.d.}{\longrightarrow}} \bigg( \int_0^{Y_t} f(X_s)(\mu_{2r} d^{\circ}X_s + \sqrt{\mu_{4r-2} - \mu_{2r}^2}\,dW_s \bigg)_{t \geq 0}, \label{new-NP2}
\end{eqnarray}
where for all $t \in \R$, $\int_0^t f(X_s)d^{\circ}X_s$ is the Stratonovich integral of $f(X)$ with respect to $X$ defined as the limit in probability of $2^{-\frac{nH}{2}}W_{n}^{(1)}(f,t)$ as $n \to \infty$, with $W_{n}^{(1)}(f,t)$ defined in (\ref{Wn}), $W$ is a two-sided Brownian motion independent of $(X,Y)$ and for $u \in \R$, $\int_0^{u} f(X_s)dW_s$ is the Wiener-It\^o integral of $f(X)$ with respect to $W$ defined in (\ref{integrale}).

A natural follow-up of (\ref{new-NP1}) and (\ref{new-NP2}) is to study the asymptotic behavior of $\tilde{S}_n^{(p)}(\cdot)$ when $H \neq \frac12$. In fact, the following more general result is our main finding in the present paper.

\begin{theorem}\label{thm1} Let $f: \R \to \R$ be a function belonging to $C_b^{\infty}$ and let $W$ denote a two-sided  Brownian motion independent of $(X,Y)$.
\begin{itemize}
\item[(1)]For $H> \frac16$, we have
\begin{equation}
\sum_{k=0}^{\lfloor 2^n t \rfloor -1} \frac12\big(f(Z_{T_{k,n}})+f(Z_{T_{k+1,n}})\big)(Z_{T_{k+1,n}}-Z_{T_{k,n}})\underset{n\to \infty}{\overset{P}{\longrightarrow}} \int_0^{Y_t} f(X_s)d^{\circ}X_s, \label{H >1/6 , r=1}
\end{equation}
where for all $t \in \R$, $\int_0^t f(X_s)d^{\circ}X_s$ is the Stratonovich integral of $f(X)$ with respect to $X$ defined as the limit in probability of $2^{-\frac{nH}{2}}W_{n}^{(1)}(f,t)$ as $n \to \infty$, with $W_{n}^{(1)}(f,t)$ defined in (\ref{Wn}).\\
For $H= \frac16$, we have
\begin{equation}\label{H = 1/6 , r=1}
\sum_{k=0}^{\lfloor 2^n t \rfloor -1} \frac12\big(f(Z_{T_{k,n}})+f(Z_{T_{k+1,n}})\big)(Z_{T_{k+1,n}}-Z_{T_{k,n}})\underset{n\to \infty}{\overset{law}{\longrightarrow}} \int_0^{Y_t} f(X_s)d^{*}X_s,
\end{equation}
where for all $t \in \R$, $\int_0^t f(X_s)d^{*}X_s$ is the Stratonovich integral of $f(X)$ with respect to $X$ defined as the limit in law of $2^{-\frac{nH}{2}}W_{n}^{(1)}(f,t)$ as $n \to \infty$.
\end{itemize}
\begin{itemize}
\item[(2)] For $\frac16 < H < \frac12$ and for any integer $r\geq 2$, we have
\end{itemize}
\begin{eqnarray}
&&\bigg( 2^{-\frac{n}{4}}\sum_{k=0}^{\lfloor 2^n t \rfloor -1} \frac12\big(f(Z_{T_{k,n}})+f(Z_{T_{k+1,n}})\big)\big(2^{\frac{nH}{2}}(Z_{T_{k+1,n}}-Z_{T_{k,n}})\big)^{2r-1}  \bigg)_{t \geq 0} \notag\\
&&\underset{n\to \infty}{\overset{\rm f.d.d.}{\longrightarrow}} \bigg( \beta_{2r-1}\int_0^{Y_t} f(X_s)dW_s \bigg)_{t\geq 0}, \label{fdd thm1}
\end{eqnarray}
where  for $u \in \R$, $\int_0^{u} f(X_s)dW_s$ is the Wiener-It\^o integral of $f(X)$ with respect to $W$ defined in (\ref{integrale}), $\beta_{2r-1} = \sqrt{\sum_{l=2}^r\kappa^2_{r,l}\,\alpha^2_{2l-1}}$,  with $\alpha_{2l-1}$  defined in (\ref{alpha}) and $\kappa_{r,l}$  defined in (\ref{hermite polynomial}).
\begin{itemize}
\item[(3)]  Fix a time $t \geq 0$, for $H>\frac12$ and for any integer $r \geq 1$, we have
\end{itemize}
\begin{eqnarray}
2^{-\frac{nH}{2}}\sum_{k=0}^{\lfloor 2^n t \rfloor -1} \frac12\big(f(Z_{T_{k,n}})+f(Z_{T_{k+1,n}})\big)\big(2^{\frac{nH}{2}}(Z_{T_{k+1,n}}-Z_{T_{k,n}})\big)^{2r-1} \underset{n\to \infty}{\overset{L^2}{\longrightarrow}}  \frac{(2r)!}{r!2^r}\int_0^{Y_t}f(X_s)d^{\circ}X_s,\notag\\ \label{L2 thm1}
\end{eqnarray}
where for all $t \in \R$, $\int_0^t f(X_s)d^{\circ}X_s$ is defined as in (\ref{H >1/6 , r=1}). 
\begin{itemize}
\item[(4)] For $\frac14 < H \leq \frac12$ and for any integer $r\geq 1$,  we have
\end{itemize}
\begin{eqnarray}
&&\bigg(2^{-\frac{3n}{4}}\sum_{k=0}^{\lfloor 2^n t \rfloor -1} \frac12\big(f(Z_{T_{k,n}})+f(Z_{T_{k+1,n}})\big)\big[\big(2^{\frac{nH}{2}}(Z_{T_{k+1,n}}-Z_{T_{k,n}})\big)^{2r} - \mu_{2r}\big]  \bigg)_{ t \geq 0} \notag\\
&&\underset{n\to \infty}{\overset{\rm f.d.d.}{\longrightarrow}} \bigg( \gamma_{2r}\int_{-\infty}^{+\infty} f(X_s)L_t^s(Y)dW_s \bigg)_{ t\geq 0}, \label{fdd thm2}
\end{eqnarray}
where $\int_{-\infty}^{+\infty} f(X_s)L_t^s(Y)dW_s$ is the Wiener-It\^o integral of $f(X_{\cdot})L^{\cdot}_t(Y)$ with respect to $W$, $\gamma_{2r} := \sqrt{\sum_{a=1}^rb_{2r,a}^2\,\alpha^2_{2a}}$,  with $\alpha_{2a}$  defined in (\ref{alpha}) and $b_{2r,a}$  defined in (\ref{hermite polynomial pair}).
\end{theorem}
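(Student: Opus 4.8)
\bigskip
\noindent\textbf{Proof strategy (sketch).}
The plan is to use the stopping times $\mathscr{T}_n$ to disentangle $X$ from $Y$, following the scheme of \cite{kh-lewis1}, \cite{NP} and \cite{RZ1}. Conditionally on the random walk $\mathscr{Y}_n=\{Y_{T_{k,n}}\}$, each increment $Z_{T_{k+1,n}}-Z_{T_{k,n}}=X_{Y_{T_{k+1,n}}}-X_{Y_{T_{k,n}}}$ is, up to sign, an increment of the fractional Brownian motion $X$ over the mesh-$2^{-n/2}$ grid $\mathscr{D}_n$, and $2^{nH/2}$ is precisely the order of such an increment. First I would rewrite each of the four sums, conditionally on $Y$, as a weighted Hermite-type power variation of $X$ along $\mathscr{D}_n$, grouping the terms according to which edge of $\mathscr{D}_n$ the walk $\mathscr{Y}_n$ crosses. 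The signed net number of crossings of a given edge, which is what governs the odd powers, equals $0$ or $\pm1$ (it records only on which side of the edge the endpoint $Y_{T_{\lfloor 2^nt\rfloor,n}}\approx Y_t$ lies), whereas for the even powers the relevant quantity is the total number of crossings of a given edge, which is of order $2^{n/2}L_t^{\cdot}(Y)$. This dichotomy already explains why the odd cases produce ordinary integrals $\int_0^{Y_t}\cdots$ and the even case a local-time integral $\int_{-\infty}^{+\infty}\cdots L_t^s(Y)\cdots$.

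\bigskip
\noindent\textbf{Hermite expansion and the role of symmetry.}
Next I would expand $x^p$ (resp.\ $x^p-\mu_p$ when $p$ is even) in Hermite polynomials with the coefficients of (\ref{hermite polynomial})--(\ref{hermite polynomial pair}), so that, conditionally on $Y$, the $q$-th Hermite term lives in the $q$-th Wiener chaos of $X$. Here the symmetric trapezoidal weight $\frac12(f(Z_{T_{k,n}})+f(Z_{T_{k+1,n}}))$ is essential: Lemma \ref{lemme-algebric} is the algebraic identity that rewrites the symmetrically weighted Hermite sum as a divergence (Skorokhod) integral plus explicitly controllable remainders, and it is this cancellation that allows the first-chaos term to converge to the Stratonovich integral $\int_0^{Y_t}f(X_s)d^{\circ}X_s$ of (\ref{Wn}) rather than to a divergent quantity. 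This is also the one place where being genuinely in the Brownian-time setting, rather than the iterated-Brownian-motion case $H=1/2$ of \cite{NP}, forces extra bookkeeping, since the law of the increments and the constants $\mu_p$ now depend on $H$.

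\bigskip
\noindent\textbf{Which chaos is critical in each case.}
Then, for each pair $(H,p)$ I would identify the single Wiener chaos that is critical after the stated normalization and show the others are negligible. For part (1) only the first chaos is present, the sum is $2^{-nH/2}W_n^{(1)}(f,\cdot)$ up to a vanishing remainder, and the $H=1/6$ borderline produces in addition a term built from the third-order variation of $X$ that converges only in law and accounts for the extra Brownian ingredient hidden in $d^{*}X$. For part (3) with $H>1/2$ the first chaos dominates with scalar factor $\frac{(2r)!}{r!2^r}$ (the coefficient of $H_1$ in $x^{2r-1}$, namely $\mu_{2r}$), and every higher chaos is subcritical against $2^{nH/2}$ for all $H>1/2$. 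For part (2) with $1/6<H<1/2$ the first-chaos contribution, after multiplication by $2^{-n/4}$, is of order $2^{n(H/2-1/4)}=o(1)$ — this is where the hypothesis $H>1/6$ enters, through part (1) — while the chaoses of order $2l-1$, $2\leq l\leq r$, survive. For part (4) with $1/4<H\leq 1/2$ the chaoses of order $2a$, $1\leq a\leq r$, survive, the additional factor $2^{-n/4}$ relative to part (2) being absorbed by the local-time concentration described in the first paragraph.

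\bigskip
\noindent\textbf{The central/stable limit step, and the main obstacle.}
The hard part will be the joint, stable convergence needed to turn the conditional-on-$Y$ limit theorems for $X$ into the unconditional statements. For the critical chaoses of order $\geq 3$ (part (2)) and $\geq 2$ (part (4)) one needs a quantitative central limit theorem — a fourth-moment / Malliavin--Stein bound for weighted Hermite variations of fractional Gaussian noise with smooth $X$-dependent weights, in the spirit of \cite{NNT} — giving, conditionally on $(X,Y)$, asymptotic normality with variances $\alpha_{2l-1}^2$ (resp.\ $\alpha_{2a}^2$) of (\ref{alpha}); simultaneously one needs the objects built from $\mathscr{Y}_n$ (its terminal value, hence $\int_0^{Y_t}$, and its occupation measure, hence $L_t^{\cdot}(Y)$) to converge to those of $Y$, with the replacement of $\{T_{k,n}\}$ by the grid $\{k2^{-n}\}$ controlled as in \cite[Lemma 2.2]{kh-lewis1}. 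Assembling these ingredients into a single stable limit — most plausibly through a characteristic-function / conditioning argument in the spirit of \cite{NP} and \cite{RZ1} — and checking that the $X$-driven fluctuations become asymptotically independent of $(X,Y)$, so that they can be represented by the auxiliary Brownian motion $W$, is where essentially all the work lies. Once this is in place, collecting the mutually orthogonal chaos contributions yields the constants $\beta_{2r-1}=\big(\sum_{l=2}^r\kappa_{r,l}^2\alpha_{2l-1}^2\big)^{1/2}$ and $\gamma_{2r}=\big(\sum_{a=1}^r b_{2r,a}^2\alpha_{2a}^2\big)^{1/2}$, and the four convergences follow.
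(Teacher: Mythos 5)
Your outline reproduces the architecture the paper actually uses for parts (2)--(4): the Khoshnevisan--Lewis crossing decomposition (Lemma \ref{lemme-algebric}), the Hermite expansions (\ref{hermite polynomial}) and (\ref{hermite polynomial pair}), the dichotomy between the signed crossing number $U_{j,n}-D_{j,n}\in\{0,\pm1\}$ for odd powers and $U_{j,n}+D_{j,n}\approx 2^{n/2}L_t^{\cdot}(Y)$ for even powers, the identification of the surviving chaoses after normalization, a Peccati--Tudor joint CLT for those chaoses, and the passage from $Y_{T_{\lfloor 2^nt\rfloor,n}}$ to $Y_t$. So as a roadmap it is faithful. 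But it is only a roadmap: every quantitative step (the coarse-graining decompositions $A,B,C$ and $A,\dots,E$, the covariance estimates of Lemma \ref{tech-lemma}, the moment bounds of Step 3 in Section 5 and of the Proposition in Section 6, the local-time estimates of Proposition \ref{local time}, and the point where the hypothesis $H>\frac14$ in part (4) is actually used, namely in bounding the zeroth-contraction terms such as (\ref{lambda1})) is deferred, and you say so yourself.

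The one place where your plan is not merely incomplete but would not go through as described is part (1). You propose to prove it by showing that ``only the first chaos is present'' and that the remainder vanishes, so that the sum reduces to $2^{-nH/2}W_n^{(1)}(f,\cdot)$ whose convergence you take for granted. In fact the sum in (\ref{H >1/6 , r=1}) \emph{equals} $2^{-nH/2}V_n^{(1)}(f,t)=2^{-nH/2}W_n^{(1)}(f,Y_{T_{\lfloor 2^nt\rfloor,n}})$ with no remainder, and the entire difficulty is the convergence of this first-chaos object for $\frac16\leq H<\frac12$; this is the content of the change-of-variable formulas of \cite{RZ2}, \cite{NNT} and \cite{NRS}, each a substantial result in its own right, and it cannot be obtained by a negligibility argument against higher chaoses. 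The paper's proof of part (1) is correspondingly different in kind from what you sketch: it does no chaos analysis at all, but combines the It\^o-type formula $F(Z_t)-F(0)=\int_0^tf(Z_s)d^{\circ}Z_s$ for $Z$ from \cite[Theorem 2.1]{RZ2} with the formula $F(X_t)-F(0)=\int_0^tf(X_s)d^{\circ}X_s$ for $X$ from \cite[Theorem 4]{NNT} (and its $H=\frac16$ analogue (\ref{N-Swan}) from \cite{NRS}, which is where the extra Brownian term in $d^{*}X$ comes from), and simply identifies $\int_0^tf(Z_s)d^{\circ}Z_s$ with $\int_0^{Y_t}f(X_s)d^{\circ}X_s$ by substituting $Y_t$ into the second formula. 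If you intend to prove part (1) without citing those external theorems, you need to supply their proofs; if you intend to cite them, your sketch should say so, since the same convergence of $2^{-nH/2}W_n^{(1)}$ is also the input for disposing of the first chaos in part (2) (Step 2 of Section 5) and for identifying the limit in part (3).
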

Theorem \ref{thm1} is also a natural follow-up  of \cite[Corollary 1.2]{RZ1} where we have studied the asymptotic behavior of the power variations of the fractional Brownian motion in Brownian time. In fact, taking $f$ equal to 1 in (\ref{L2 thm1}), we deduce the following Corollary. 
\begin{corollary} Assume that $H>\frac12$, for any $t\geq 0$ and any integer $r \geq 1$, we have
\begin{eqnarray*}
2^{-\frac{nH}{2}}\sum_{k=0}^{\lfloor 2^n t \rfloor -1} \big(2^{\frac{nH}{2}}(Z_{T_{k+1,n}}-Z_{T_{k,n}})\big)^{2r-1} \underset{n\to \infty}{\overset{L^2}{\longrightarrow}}  \frac{(2r)!}{r!2^r}\,Z_t,
\end{eqnarray*}
thus, we understand the asymptotic behavior of the {\it signed} power variations of odd order of the fractional Brownian motion in Brownian time, in the case $H>\frac12$, which was missing in the first point in \cite[Corollary 1.2]{RZ1}.
\end{corollary}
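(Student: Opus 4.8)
The plan is to deduce the Corollary directly from part~(3) of Theorem~\ref{thm1} by specializing the weight function to the constant $f\equiv 1$. First I would note that $f\equiv 1$ belongs to $C_b^{\infty}$, so (\ref{L2 thm1}) applies to it; since then $\frac12\big(f(Z_{T_{k,n}})+f(Z_{T_{k+1,n}})\big)=1$ for every $k$, the left-hand side of (\ref{L2 thm1}) becomes verbatim the sequence
\[
2^{-\frac{nH}{2}}\sum_{k=0}^{\lfloor 2^n t \rfloor -1}\big(2^{\frac{nH}{2}}(Z_{T_{k+1,n}}-Z_{T_{k,n}})\big)^{2r-1}
\]
appearing in the Corollary, and Theorem~\ref{thm1}(3) tells us it converges in $L^2$ to $\frac{(2r)!}{r!2^r}\int_0^{Y_t}f(X_s)d^{\circ}X_s=\frac{(2r)!}{r!2^r}\int_0^{Y_t}1\,d^{\circ}X_s$. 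Hence the whole matter reduces to the identification $\int_0^{Y_t}1\,d^{\circ}X_s=Z_t$.

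To establish this identification I would again invoke the theorem with $f\equiv 1$, this time through (\ref{H >1/6 , r=1}), which is available because $H>\frac12>\frac16$. With $f\equiv 1$ its left-hand side is a telescoping sum,
\[
\sum_{k=0}^{\lfloor 2^n t \rfloor -1}\big(Z_{T_{k+1,n}}-Z_{T_{k,n}}\big)=Z_{T_{\lfloor 2^n t\rfloor,n}}-Z_{T_{0,n}}=Z_{T_{\lfloor 2^n t\rfloor,n}},
\]
using $T_{0,n}=0$ and $Z_0=X_{Y_0}=X_0=0$. Since $T_{\lfloor 2^n t\rfloor,n}\to t$ as $n\to\infty$ (Lemma~2.2 of \cite{kh-lewis1}) and the paths of $Z=X\circ Y$ are continuous, this converges in probability to $Z_t$; comparing with (\ref{H >1/6 , r=1}), which says the same sequence converges in probability to $\int_0^{Y_t}1\,d^{\circ}X_s$, and using uniqueness of limits in probability, we get $\int_0^{Y_t}1\,d^{\circ}X_s=Z_t$ almost surely. (The same identity can also be read off directly from the definition of $\int_0^{u}1\,d^{\circ}X_s$ as the limit of the telescoping weighted sums $2^{-\frac{nH}{2}}W_n^{(1)}(1,u)$, which gives $\int_0^{u}1\,d^{\circ}X_s=X_u$ for every $u\in\R$ by path-continuity of $X$, and then one evaluates at $u=Y_t$.) Substituting into the $L^2$ limit of the first paragraph completes the argument; one may additionally remark that $\frac{(2r)!}{r!2^r}=(2r-1)!!=\mu_{2r}=E[N^{2r}]$ with $N\sim\mathcal N(0,1)$.

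I do not expect any genuine obstacle at the level of the Corollary: all the analytic difficulty has already been absorbed into the proof of Theorem~\ref{thm1}(3), in particular the need to track the interplay between the (approximately Gaussian) increments of $X$ sampled along the grid $\mathscr{D}_n$ and the simple symmetric random-walk structure of $\mathscr{Y}_n=\{Y_{T_{k,n}}\}$, together with the fact that the odd exponent $2r-1$ makes the naive moment $E[N^{2r-1}]$ vanish so that the non-trivial limit arises only from this interaction. The sole point one must spell out here is the soft identification of the Stratonovich integral of the constant function in the second paragraph, and that amounts to nothing more than a telescoping sum and the Khoshnevisan--Lewis approximation $T_{\lfloor 2^n t\rfloor,n}\to t$.
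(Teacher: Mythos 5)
Your proposal is correct and follows exactly the paper's route: the Corollary is obtained by taking $f\equiv 1$ in (\ref{L2 thm1}), and the only remaining point is the identification $\int_0^{Y_t}1\,d^{\circ}X_s=X_{Y_t}=Z_t$, which the paper leaves implicit (it follows from the change-of-variable formula (\ref{limit 1'}) with $F(x)=x$) and which you verify by the equivalent telescoping argument.
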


\begin{remark}
\begin{itemize}
\item[1.] For $H= \frac16$, it has been proved in  \cite[(3.17)]{RZ2} that
\end{itemize}
\[
\big(\sum_{k=0}^{\lfloor 2^n t \rfloor -1} \frac12\big(f(Z_{T_{k,n}})+f(Z_{T_{k+1,n}})\big)(Z_{T_{k+1,n}}-Z_{T_{k,n}})^3\big)_{t \geq 0}\underset{n\to \infty}{\overset{f.d.d.}{\longrightarrow}} \big(\kappa_3\int_0^{Y_t}f(X_s)dW_s\big)_{t\geq 0},
\]
with $W$ a standard two-sided Brownian motion independent of the pair $(X,Y)$ and $\kappa_3\simeq 2.322$. Thus, (\ref{fdd thm1}) continues to hold  for $H=\frac16$ and $r=2$.
\begin{itemize}
\item[2.] In the particular case where $H=1/2$ (that is, when $Z$ is the {\it iterated Brownian motion}), we emphasize that the fourth point of Theorem \ref{thm1} allows one to recover (\ref{new-NP1}). In fact, since $H=\frac12$, then, for any integer $a\geq 1$, by (\ref{alpha}) and its related explanation, $\alpha^2_{2a} = (2a)!$. So, using the decomposition (\ref{hermite polynomial pair}) and (\ref{Hp-Hq}), the reader can verify that $\sqrt{\mu_{4r} - \mu_{2r}^2}$ appearing in (\ref{new-NP1}) is equal to $\gamma_{2r}$ appearing in (\ref{fdd thm2}).
\end{itemize}
\begin{itemize}
\item[3.] The limit process in (\ref{new-NP2}) is $\bigg( \int_0^{Y_t} f(X_s)(\mu_{2r} d^{\circ}X_s + \sqrt{\mu_{4r-2} - \mu_{2r}^2}\,dW_s \bigg)_{t \geq 0} $. Observe that $\mu_{2r}=E[N^{2r}]= \frac{(2r)!}{r!2^r}$ and since $H=\frac12$, then, for any integer $l\geq 1$, by (\ref{alpha}) and its related explanation, $\alpha^2_{2l-1} = (2l-1)!$. So, using the decomposition (\ref{hermite polynomial}) and (\ref{Hp-Hq}), the reader can verify that $\sqrt{\mu_{4r-2} - \mu_{2r}^2}$ is equal to $\beta_{2r-1}$ appearing in (\ref{fdd thm1}). We deduce that the limit process in (\ref{new-NP2}) is $\bigg( \frac{(2r)!}{r!2^r}\int_0^{Y_t} f(X_s)d^{\circ}X_s + \beta_{2r-1}\int_0^{Y_t} f(X_s)dW_s \bigg)_{t \geq 0} $. Thus, one can say that,  for any integer $r\geq2$, the limit of the  weighted $(2r-1)$-variation of $Z$  for $H=\frac12$ is mixing between the limit of the  weighted $(2r-1)$-variation of $Z$ for $H> \frac12$  and the limit of the  weighted $(2r-1)$-variation of $Z$ for $\frac16<H<\frac12$.
\end{itemize}
\end{remark}

A brief outline of the paper is as follows. In section 2, we give the preliminaries  to the  proof of Theorem \ref{thm1}. In section 3, we start the preparation to our proof. In section 4, we prove (\ref{H >1/6 , r=1}) and (\ref{H = 1/6 , r=1}). In sections  5, 6 and 7 we prove (\ref{fdd thm1}), (\ref{L2 thm1}) and (\ref{fdd thm2}). Finally, in section 8, we give the proof of a  technical lemma.

\section{Preliminaries}

\subsection{Elements of Malliavin calculus}

 In this section, we gather some elements of Malliavin calculus we shall need in the sequel. The reader in referred to \cite{NP2} for details and any unexplained result.

We continue to denote by  $X = (X_{t})_{t \in \R}$ a two-sided fractional Brownian motion with Hurst parameter $ H \in (0,1).$ That is, $X$ is a zero mean Gaussian process, defined on a complete probability space $(\Omega, \mathscr{A}, P)$, with  covariance function, \[ C_{H}(t,s) = E(X_{t}X_{s})=\frac{1}{2}(|s|^{2H} + |t|^{2H} -|t-s|^{2H}),\text{\:\:\:} s,t \in \R.\]
   We suppose that $\mathscr{A}$ is the $\sigma$-field generated by $X$. For all $n \in \N^*$, we let $\mathscr{E}_n$ be the set of step functions on $[-n,n]$, and $\displaystyle{\mathscr{E}:= \cup_n \mathscr{E}_n}$. Set $\varepsilon_t = \textbf{1}_{[0,t]}$ (resp. $\textbf{1}_{[t,0]}$) if $t \geq 0$ (resp. $t < 0$). Let $\mathscr{H}$ be the Hilbert space defined as the closure of $\mathscr{E}$ with respect to the inner product
   \begin{equation}
    \langle \varepsilon_t, \varepsilon_s \rangle_{\mathscr{H}} = C_{H}(t,s),\quad s,t \in \R. \label{inner product}
   \end{equation}
     The mapping $\varepsilon_t \mapsto X_{t}$ can be extended to an isometry between $\mathscr{H}$ and the Gaussian space $\mathbb{H}_{1}$ associated with $X$. We will denote this isometry by $\varphi \mapsto X(\varphi).$

     Let $\mathscr{F}$ be the set of all smooth cylindrical random variables, i.e. of the form
   \[ F = \phi (X_{t_{1}},...,X_{t_{l}}),\] where $l \in \N^*$, $\phi : \mathbb{R}^{l}\rightarrow \mathbb{R}$ is a $C^{\infty}$-function such that $f$ and its partial derivatives have at most polynomial growth, and  $ t_{1} < . . . <t_{l}$ are some real numbers. The derivative of $F$ with respect to $X$ is the element of $L^{2}(\Omega, \mathscr{H})$ defined by \[ D_{s}F = \sum_{i=1}^{l}\frac{\partial\phi}{\partial x_{i}}(X_{t_{1}}, ...,X_{t_{l}})\varepsilon_{t_{i}}(s), \text{\: \: \:} s \in \R.\]In particular $D_{s}X_{t} = \varepsilon_t(s)$. For any integer $k \geq 1$, we denote by $\mathbb{D}^{k,2}$ the closure of $\mathscr{F}$ with respect to the norm
   \[ \|F\|_{k,2}^{2} = E(F^{2}) + \sum_{j=1}^{k} E[ \|D^{j}F\|_{\mathscr{H}^{\otimes j}}^{2}].\]
   The Malliavin derivative $D$ satisfies the chain rule. If $\varphi : \mathbb{R}^{n} \rightarrow \mathbb{R}$ is $C_{b}^{1}$ and if $F_1,\ldots,F_n$ are in $\mathbb{D}^{1,2}$, then $\varphi(F_{1},...,F_{n}) \in \mathbb{D}^{1,2}$ and we have
   \[ D\varphi(F_{1},...,F_{n}) = \sum_{i=1}^{n} \frac{\partial \varphi}{\partial x_{i}}(F_{1},...,F_{n})DF_{i}.\]
   We have the following Leibniz formula, whose proof is straightforward  by induction on $q$. Let $\varphi, \psi \in C_{b}^{q}$ $(q\geq 1)$, and fix $0 \leq u<v $ and $0\leq s<t .$ Then  $\big( \varphi(X_{s})+ \varphi(X_{t}) \big) \big(\psi(X_{u})+ \psi(X_{v}) \big) \in \mathbb{D}^{q,2}$ and
   \begin{eqnarray}
  && D^{q}\bigg( \big( \varphi(X_{s})+\varphi(X_{t})\big) \big(  \psi(X_{u})+\psi(X_{v})\big)\bigg)\label{Leibnitz1}\\
& =& \sum_{l=0}^{q} \binom{q}{l} \bigg( \varphi^{(l)}(X_{s})\varepsilon_s^{\otimes l}+\varphi^{(l)}(X_{t})\varepsilon_t^{\otimes l}\bigg)\tilde{\otimes}\bigg( \psi^{(q-l)}(X_{u})\varepsilon_u^{\otimes (q-l)}+\psi^{(q-l)}(X_{v})\varepsilon_v^{\otimes (q-l)}\bigg) \notag
   \end{eqnarray}
   where $\tilde{\otimes}$ stands for the symmetric tensor product and $ \varphi^{(l)}$ (resp. $ \psi^{(q-l)}$) means that $\varphi$ is differentiated $l$ times (resp. $\psi$ is differentiated $q-l$ times). A similar statement holds fo $ u<v \leq 0 $ and $ s<t \leq 0$.

   If a random element $u \in L^{2}(\Omega, \mathscr{H})$ belongs to the domain of the divergence operator, that is, if it satisfies
   \[ |E\langle DF,u\rangle_{\mathscr{H}}|\leq c_{u}\sqrt{E(F^{2})} \text{\: for  any\:} F\in \mathscr{F},\] then $I(u)$ is defined by the duality relationship
\[
E \big( FI(u)\big) = E \big( \langle DF,u\rangle_{\mathscr{H}}\big),
\]
for every $F \in \mathbb{D}^{1,2}.$

For every $n\geq 1$, let $\mathbb{H}_{n}$ be the $n$th Wiener chaos of $X$, that is, the closed linear subspace of $ L^{2}(\Omega, \mathscr{A},P)$ generated by the random variables $\lbrace H_{n}(B(h)), h \in \mathscr{H}, \|h\|_{\mathscr{H}}=1 \rbrace,$ where $H_{n}$ is the $n$th Hermite polynomial. Recall that  $H_0=0$, $H_p(x)= (-1)^p \exp(\frac{x^2}{2})\frac{d^p}{dx^p}\exp(-\frac{x^2}{2})$ for $p\geq 1$, and that
\begin{equation}\label{Hp-Hq}
E(H_p(X)H_q(Y))= \left\{
\begin{array}{lcl}
 p!(E[XY])^p & &\mbox{if $p=q$,}\\
 0 & &\mbox{otherwise}\\
  \end{array}
\right.,
\end{equation}
for jointly Gaussian $X,Y$ and integers $p,q \geq 1$.
 The mapping
\begin{equation}
I_{n}(h^{\otimes n}) = H_{n}(B(h))\label{linear-isometry}
\end{equation}
 provides a linear isometry between the symmetric tensor product $\mathscr{H}^{\odot n}$ and $\mathbb{H}_{n}$. For $H =\frac{1}{2}$, $I_{n}$ coincides with the multiple Wiener-It\^{o} integral of order $n$. The following duality formula holds
\begin{eqnarray}\label{adjoint}
  E \big( FI_{n}(h)\big) = E \big( \langle D^{n}F,h\rangle_{\mathscr{H}^{\otimes n}}\big),
  \end{eqnarray}
  for any element $ h\in \mathscr{H}^{\odot n}$ and any random variable $F \in \mathbb{D}^{n,2}.$

  Let $\lbrace e_{k}, k \geq 1\rbrace $ be a complete orthonormal system in $\mathscr{H}.$ Given $f \in \mathscr{H}^{\odot n}$ and $g \in \mathscr{H}^{\odot m},$ for every $r= 0,...,n\wedge m,$ the contraction of $f$ and $g$ of order $r$ is the element of $ \mathscr{H}^{\otimes(n+m-2r)}$ defined by
  \[ f\otimes_{r} g = \sum_{k_{1},...,k_{r} =1}^{\infty} \langle f, e_{k_{1}}\otimes...\otimes e_{k_{r}}\rangle_{\mathscr{H}^{\otimes r}}\otimes \langle g,e_{k_{1}}\otimes...\otimes e_{k_{r}}\rangle_{\mathscr{H}^{\otimes r}}.\]
 Finally, we recall the following product formula: if $f\in \mathscr{H}^{\odot n}$ and $g\in \mathscr{H}^{\odot m}$ then
  \begin{eqnarray}\label{product}
  I_{n}(f)I_{m}(g)= \sum_{r=0}^{n\wedge m} r! \binom{n}{r}\binom{m}{r} I_{n+m-2r}(f\otimes_{r}g).
  \end{eqnarray}
  
  \subsection{Some technical results}
  For all $k\in \Z$ and $n \in \N$, we write 
\begin{eqnarray*}
\delta_{(k+1)2^{-n/2}} = \varepsilon_{(k+1)2^{-n/2}} - \varepsilon_{k2^{-n/2}}.
\end{eqnarray*}
  The following lemma will play a pivotal role in the proof of Theorem \ref{thm1}. The reader can find an original version of this lemma in \cite[Lemma 5, Lemma 6]{NNT}.

\begin{lemma}\label{tech-lemma}
\begin{enumerate}

\item  If $H\leq \frac12$, for all integer $q\geq1$, for all $j \in \N$ and $u\in\R$,
\begin{eqnarray}
 |\langle \varepsilon_u^{\otimes q}, \delta_{(j+1)2^{-n/2}}^{\otimes q} \rangle_{\mathscr{H}^{\otimes q}} | &\leq & 2^{-nqH}.\label{0}
 \end{eqnarray}
 
\item  If $H > \frac12$, for all integer $q\geq 1$, for all $t \in \R_+$ and  $j, j' \in \{0, \ldots, \lfloor 2^{n/2}t\rfloor -1\}$,
\begin{eqnarray}
 |\langle \varepsilon_{j2^{-n/2}}^{\otimes q}, \delta_{(j'+1)2^{-n/2}}^{\otimes q} \rangle_{\mathscr{H}^{\otimes q}} | &\leq & 2^q 2^{-\frac{nq}{2}}t^{(2H-1)q},\label{1}\\
 |\langle \varepsilon_{(j+1)2^{-n/2}}^{\otimes q}, \delta_{(j'+1)2^{-n/2}}^{\otimes q} \rangle_{\mathscr{H}^{\otimes q}} | &\leq & 2^q 2^{-\frac{nq}{2}}t^{(2H-1)q}.\label{1'}
 \end{eqnarray}

\item For all integers $r,n \geq 1$ and $t\in\R_+$, and with $C_{H,r}$ a constant depending  only on $H$  and $r$ (but independent of $t$ and $n$),
\begin{enumerate}

\item if $H< 1-\frac{1}{2r}$,
 \begin{eqnarray}\label{2}
 \sum_{k,l=0}^{\lfloor 2^{n/2} t \rfloor - 1}|\langle \delta_{(k+1)2^{-n/2}} ; \delta_{(l+1)2^{-n/2}}\rangle_\mathscr{H}|^r &\leq& C_{H,r}  \: t\:  2^{n(\frac12-rH)}
 \end{eqnarray}

\item if $H=1 - \frac{1}{2r}$,
\begin{eqnarray}\label{3}
\sum_{k,l=0}^{\lfloor 2^{n/2} t \rfloor - 1}|\langle \delta_{(k+1)2^{-n/2}} ; \delta_{(l+1)2^{-n/2}}\rangle_\mathscr{H}|^r &\leq& C_{H,r}\:2^{n(\frac12-rH)}(t(1+n)  +t^2)
\end{eqnarray}

\item if $H> 1 - \frac{1}{2r}$,
\begin{eqnarray}
\sum_{k,l=0}^{\lfloor 2^{n/2} t \rfloor - 1}|\langle \delta_{(k+1)2^{-n/2}} ; \delta_{(l+1)2^{-n/2}}\rangle_\mathscr{H}|^r &\leq& C_{H,r}\big(t\: 2^{n(\frac12 -rH)} + \:t^{2-(2-2H)r}\:2^{n(1-r)}\big).\notag \\
\label{4}
\end{eqnarray}

\end{enumerate}
\item For $H\in(0,1)$. For all integer $n \geq 1$ and  $t\in\R_+$,
\begin{eqnarray}
 \sum_{k,l=0}^{\lfloor 2^{n/2} t \rfloor - 1}|\langle \varepsilon_{k2^{-n/2}} ; \delta_{(l+1)2^{-n/2}}\rangle_\mathscr{H}| &\leq&  2^{\frac{n}{2} +1} t^{2H + 1}, \label{5}\\
 \sum_{k,l=0}^{\lfloor 2^{n/2} t \rfloor - 1}|\langle \varepsilon_{(k+1)2^{-n/2}} ; \delta_{(l+1)2^{-n/2}}\rangle_\mathscr{H}| &\leq&  2^{\frac{n}{2}+1} t^{2H + 1}.\label{6}
\end{eqnarray}
\end{enumerate}
\end{lemma}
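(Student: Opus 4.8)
The plan is to reduce every estimate to the scalar covariances $\langle\varepsilon_a,\delta_{(l+1)2^{-n/2}}\rangle_{\mathscr{H}}=E[X_a(X_{(l+1)2^{-n/2}}-X_{l2^{-n/2}})]$ and to the stationary sequence $\rho(m):=\tfrac12\big(|m+1|^{2H}+|m-1|^{2H}-2|m|^{2H}\big)$, and then to feed these into elementary analytic inequalities. For items~1 and~2 the first remark is that $\langle g^{\otimes q},h^{\otimes q}\rangle_{\mathscr{H}^{\otimes q}}=\langle g,h\rangle_{\mathscr{H}}^{q}$, so it suffices to estimate the scalar inner product and raise it to the power $q$. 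For item~1 ($H\le\tfrac12$) I would write $\langle\varepsilon_u,\delta_{(j+1)2^{-n/2}}\rangle_{\mathscr{H}}=C_H(u,(j+1)2^{-n/2})-C_H(u,j2^{-n/2})$ from the covariance formula; the $|u|^{2H}$ terms cancel, leaving two second differences of $x\mapsto|x|^{2H}$ over an interval of length $2^{-n/2}$, and since $\big||a|^{2H}-|b|^{2H}\big|\le|a-b|^{2H}$ when $2H\le1$, each contributes at most $\tfrac12\,2^{-nH}$, which gives~(\ref{0}). For item~2 ($H>\tfrac12$) I would use $C_H(s,t)=H(2H-1)\int_0^s\!\int_0^t|u-v|^{2H-2}\,du\,dv$ for $s,t\ge0$, so that $\langle\varepsilon_{j2^{-n/2}},\delta_{(j'+1)2^{-n/2}}\rangle_{\mathscr{H}}=H(2H-1)\int_0^{j2^{-n/2}}\!\!\int_{j'2^{-n/2}}^{(j'+1)2^{-n/2}}|u-v|^{2H-2}\,dv\,du\ge0$; bounding the inner $u$-integral by $\tfrac{2t^{2H-1}}{2H-1}$ uniformly (all relevant times lie in $[0,t]$ because $j,j'\le\lfloor2^{n/2}t\rfloor-1$) and noting the outer integration is over a set of length $2^{-n/2}$ gives $0\le\langle\varepsilon_{j2^{-n/2}},\delta_{(j'+1)2^{-n/2}}\rangle_{\mathscr{H}}\le 2H\,2^{-n/2}t^{2H-1}\le 2\cdot2^{-n/2}t^{2H-1}$; raising to the power $q$ yields~(\ref{1}), and verbatim with $\varepsilon_{(j+1)2^{-n/2}}$ in place of $\varepsilon_{j2^{-n/2}}$ one gets~(\ref{1'}).

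For item~3 I would first use self-similarity and stationarity of the increments of $X$ to write $\langle\delta_{(k+1)2^{-n/2}},\delta_{(l+1)2^{-n/2}}\rangle_{\mathscr{H}}=2^{-nH}\rho(k-l)$, together with the Taylor/mean-value bound $|\rho(m)|\le c_H(1+|m|)^{2H-2}$ obtained by estimating the second difference of $x\mapsto|x|^{2H}$ by its second derivative. Writing $M:=\lfloor2^{n/2}t\rfloor$ and grouping the double sum according to $m=k-l$,
\[
\sum_{k,l=0}^{M-1}\big|\langle\delta_{(k+1)2^{-n/2}},\delta_{(l+1)2^{-n/2}}\rangle_{\mathscr{H}}\big|^{r}=2^{-nrH}\sum_{|m|<M}(M-|m|)\,|\rho(m)|^{r}\le 2^{-nrH}\,M\sum_{|m|<M}|\rho(m)|^{r}.
\]
The three regimes then come from the elementary trichotomy for $\sum_{m}(1+|m|)^{(2H-2)r}$: it converges if $H<1-\tfrac1{2r}$, so the inner sum is $\le C_{H,r}$ and~(\ref{2}) follows after using $M\le2^{n/2}t$; it is $O(1+\log M)$ if $H=1-\tfrac1{2r}$, and since $\log M\le\tfrac{n}{2}\log2+\log t\le n+t$ whenever $M\ge1$, the inner sum is $O(1+n+t)$ and~(\ref{3}) follows; and $\sum_{1\le|m|<M}|m|^{(2H-2)r}=O\big(M^{(2H-2)r+1}\big)$ if $H>1-\tfrac1{2r}$, so the whole left-hand side is $\le C_{H,r}\,2^{-nrH}\big(M+M^{2-(2-2H)r}\big)$, which after inserting $M\le2^{n/2}t$ and simplifying the exponent via $-nrH+\tfrac{n}{2}\big(2-(2-2H)r\big)=n(1-r)$ produces exactly the two terms $t\,2^{n(\frac12-rH)}$ and $t^{2-(2-2H)r}\,2^{n(1-r)}$ of~(\ref{4}). (When $M=0$ all bounds are trivial.)

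For item~4 the termwise use of~(\ref{0}) is too lossy, so I would differentiate the covariance: $\partial_sC_H(a,s)=Hs^{2H-1}-H|a-s|^{2H-1}\,\mathrm{sgn}(s-a)$ for $s,a>0$, whence, for $a\in\{k2^{-n/2},(k+1)2^{-n/2}\}$,
\[
\big|\langle\varepsilon_a,\delta_{(l+1)2^{-n/2}}\rangle_{\mathscr{H}}\big|\le H\int_{l2^{-n/2}}^{(l+1)2^{-n/2}}\big(s^{2H-1}+|a-s|^{2H-1}\big)\,ds.
\]
Summing over $l$ first telescopes the intervals into $[0,M2^{-n/2}]$, and since $a,M2^{-n/2}\in[0,t]$ one gets $\int_0^{M2^{-n/2}}\big(s^{2H-1}+|a-s|^{2H-1}\big)\,ds\le\tfrac1{2H}\big((M2^{-n/2})^{2H}+2t^{2H}\big)\le\tfrac{3}{2H}t^{2H}$; summing the remaining $M\le2^{n/2}t$ values of $k$ gives a total $\le\tfrac32\,2^{n/2}t^{2H+1}\le 2^{n/2+1}t^{2H+1}$, which is~(\ref{5}), and the same argument gives~(\ref{6}). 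The step I expect to be the main obstacle is the bookkeeping in item~3: one must carry the exact powers of $2^{n}$ and of $t$ through all three regimes without losing a factor — in particular producing the correction terms $t(1+n)+t^{2}$ in the critical case and $t^{2-(2-2H)r}$ in the summable-second-difference case — while making sure the crude replacements $M\le2^{n/2}t$ and $\log M\lesssim n+t$ are applied in the right places; item~4's ``sum over $l$ first, then integrate'' trick is the other point requiring a little care, since a purely term-by-term estimate does not reach the sharp exponent $t^{2H+1}$.
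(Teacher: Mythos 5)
Your proof is correct and, for items 1 and 3, follows the paper's own route almost verbatim: reduce to the scalar inner product, pass to $\rho(k-l)$ via self-similarity/stationarity, regroup the double sum along $m=k-l$, and split according to the summability of $|\rho|^{r}$ (the paper uses the asymptotic $\rho(p)\sim H(2H-1)p^{2H-2}$ where you use the uniform bound $|\rho(m)|\le c_H(1+|m|)^{2H-2}$, which is immaterial). Where you genuinely diverge is in items 2 and 4. For (\ref{1})--(\ref{1'}) the paper expands the covariance explicitly and applies the mean value theorem to $x\mapsto |x|^{2H}$, getting $\big||b|^{2H}-|a|^{2H}\big|\le 2(|a|\vee|b|)^{2H-1}|b-a|$ and hence the same factor $2\cdot 2^{-n/2}t^{2H-1}$ per term; your use of the representation $C_H(s,t)=H(2H-1)\int_0^s\int_0^t|u-v|^{2H-2}\,du\,dv$ is a clean substitute that additionally yields positivity of the inner product for free, at the cost of invoking $H>\tfrac12$ structurally rather than elementarily. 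For (\ref{5})--(\ref{6}) the paper performs a discrete telescoping of $\sum_l(|l+1|^{2H}-|l|^{2H})$ plus a change of variable and Fubini for the $|k-l|$ part, whereas you telescope continuously by writing each term as $\int\partial_sC_H(a,s)\,ds$ and merging the intervals; the two arguments are the discrete and continuous versions of the same idea and land on the identical constant $\tfrac32\,2^{n/2}t^{2H+1}\le 2^{n/2+1}t^{2H+1}$. Neither variant gains or loses anything quantitatively; the integral-representation versions are perhaps slightly slicker but assume more about the fBm covariance, while the paper's versions use only elementary inequalities on $|x|^{2H}$.
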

\begin{proof}{}
 The proof, which is quite long and technical, is postponed in Section 8.
 \end{proof}

It has been mentioned in \cite{kh-lewis1} that  $\{\|Y_{T_{\lfloor 2^n t\rfloor,n}}\|_{4} : n \geq 0\}$ is a bounded sequence. More generally, we have the following result. 
\begin{lemma}\label{bounded sequence}  For any integer $k \geq 1$,
$\{\|Y_{T_{\lfloor 2^n t\rfloor,n}}\|_{2k} : n \geq 0\}$ is a bounded sequence.
 
\end{lemma}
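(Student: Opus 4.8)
The plan is to recall the probabilistic structure of the stopping times $T_{k,n}$: as noted in the excerpt, $\mathscr{Y}_n = \{Y_{T_{k,n}} : k \geq 0\}$ is a simple symmetric random walk on the grid $\mathscr{D}_n = 2^{-n/2}\Z$, so that $Y_{T_{k,n}} = 2^{-n/2} S_k$ where $(S_k)_{k\geq 0}$ is a standard simple symmetric random walk on $\Z$. Thus the quantity to control is $2^{-kn/2}\,\esp\big[|S_{\lfloor 2^n t\rfloor}|^{2k}\big]$ after replacing the random index $\lfloor 2^n t\rfloor$ — but here one must be careful, because $\lfloor 2^n t\rfloor$ is a deterministic index, not the counting variable attached to the stopping times. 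So actually $Y_{T_{\lfloor 2^n t\rfloor, n}} = 2^{-n/2} S_{\lfloor 2^n t\rfloor}$ with $S$ a simple symmetric random walk, and the claim reduces to showing $\sup_n 2^{-nk}\,\esp\big[S_{\lfloor 2^n t\rfloor}^{2k}\big] < \infty$ for each fixed $k$ and $t$.

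The key step is then a moment estimate for simple random walk: for a standard SSRW $(S_m)$ one has $\esp[S_m^{2k}] \leq C_k\, m^k$ for all $m \geq 1$, with $C_k$ depending only on $k$. This is classical — it follows, for instance, from the Marcinkiewicz–Zygmund (or Rosenthal) inequality applied to the i.i.d.\ increments $\xi_i = \pm 1$, since $\esp[S_m^{2k}] \leq B_k\, m^{k}\,\esp[|\xi_1|^{2k}] = B_k\, m^k$; alternatively one can argue by induction on $k$ using $\esp[S_m^{2k}] = \sum_j \binom{m}{j}$-type combinatorial bounds, or simply note $S_m/\sqrt m$ converges in distribution to a Gaussian with all moments converging (uniform integrability of $(S_m/\sqrt m)^{2k}$ again coming from the MZ bound). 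Plugging $m = \lfloor 2^n t\rfloor \leq 2^n t$ gives $\esp\big[Y_{T_{\lfloor 2^n t\rfloor,n}}^{2k}\big] = 2^{-nk}\,\esp[S_{\lfloor 2^n t\rfloor}^{2k}] \leq 2^{-nk}\, C_k\, (2^n t)^k = C_k\, t^k$, a bound independent of $n$, whence the sequence $\{\|Y_{T_{\lfloor 2^n t\rfloor,n}}\|_{2k} : n\geq 0\}$ is bounded by $(C_k t^k)^{1/(2k)} = C_k^{1/(2k)}\sqrt t$.

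The only genuine obstacle is making precise the identification $Y_{T_{\lfloor 2^n t\rfloor,n}} = 2^{-n/2} S_{\lfloor 2^n t\rfloor}$ in law and confirming that $\lfloor 2^n t\rfloor$ really is the relevant (deterministic) number of random-walk steps. This is exactly the statement, recalled from \cite{kh-lewis1}, that $\mathscr{Y}_n$ is a simple symmetric random walk over $\mathscr{D}_n$; so the distribution of $Y_{T_{m,n}}$ is that of $2^{-n/2}$ times a sum of $m$ i.i.d.\ Rademacher variables, and one applies this with $m = \lfloor 2^n t\rfloor$. Everything else is the routine random-walk moment bound above. I would write the argument in essentially two lines once the reduction to SSRW is stated, citing the Burkholder/Marcinkiewicz–Zygmund inequality (or an elementary induction) for $\esp[S_m^{2k}] \leq C_k m^k$.
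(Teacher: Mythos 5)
Your proof is correct and follows essentially the same route as the paper: both reduce the claim to the fact that $Y_{T_{\lfloor 2^n t\rfloor,n}}$ is $2^{-n/2}$ times the position after $\lfloor 2^n t\rfloor$ steps of a simple symmetric random walk, and then bound the $2k$-th moment by $C_k\,(2^n t)^k\,2^{-nk}=C_k t^k$. The only difference is that the paper proves the moment estimate $E[S_m^{2k}]\leq C_k m^k$ by hand, expanding $\big(\sum_{l}(Y_{T_{l+1,n}}-Y_{T_{l,n}})\big)^{2k}$ and counting the contributions with all-even exponents, whereas you invoke the Marcinkiewicz--Zygmund (Burkholder/Rosenthal) inequality for the Rademacher increments; both are valid.
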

\begin{proof}{}
Recall from the introduction that $\{Y_{T_{k,n}} : k \geq 0\}$ is a simple and symmetric random walk on  $\mathscr{D}_n$, and observe that $Y_{T_{\lfloor 2^n t\rfloor,n}} = \sum_{l=0}^{\lfloor 2^n t\rfloor -1} (Y_{T_{l+1,n}} - Y_{T_{l,n}})$. So,  we have
\begin{eqnarray}
&& E\big[\big(Y_{T_{\lfloor 2^n t\rfloor,n}}\big)^{2k}\big] = \sum_{l_1, \ldots , l_{2k} =0}^{\lfloor 2^n t\rfloor -1}E\big[(Y_{T_{l_1+1,n}} - Y_{T_{l_1,n}})\times \ldots \times (Y_{T_{l_{2k}+1,n}} - Y_{T_{l_{2k},n}})\big]\notag\\
&=& \sum_{m=1}^k \sum_{a_1 + \ldots + a_m = 2k} C_{a_1, \ldots ,a_m}\sum_{\underset{ l_i \neq l_j \text{ for }i\neq j}{l_1, \ldots, l_m =0}}^{\lfloor 2^n t\rfloor -1}E\big[(Y_{T_{l_1+1,n}} - Y_{T_{l_1,n}})^{a_1}\big]\times \ldots \times E\big[(Y_{T_{l_m+1,n}} - Y_{T_{l_m,n}})^{a_m}\big],\notag\\ \label{lemma bounded}
\end{eqnarray}
where $\forall i\in\{1, \ldots ,m\}$ $a_i$ is an even integer, $ \forall m \in \{1, \ldots, k\}$ $C_{a_1,\ldots ,a_m} \geq 0$, is some combinatorial constant whose explicit value is immaterial here. Now observe that the quantity in (\ref{lemma bounded}) is equal to
\begin{eqnarray*}
&&\sum_{m=1}^k \sum_{a_1 + \ldots + a_m = 2k} C_{a_1, \ldots ,a_m} \lfloor 2^n t\rfloor \big(\lfloor 2^n t\rfloor -1\big)\times \ldots \times \big(\lfloor 2^n t\rfloor -m +1\big)2^{-\frac{n}{2}(a_1 + \ldots + a_m)}\\
&=&\sum_{m=1}^k \sum_{a_1 + \ldots + a_m = 2k} C_{a_1, \ldots ,a_m} \lfloor 2^n t\rfloor \big(\lfloor 2^n t\rfloor -1\big)\times  \ldots \times \big(\lfloor 2^n t\rfloor -m +1\big)2^{-nk},
\end{eqnarray*}
so, since $1\leq m \leq k$, we deduce that $\big\{ E\big[\big(Y_{T_{\lfloor 2^n t\rfloor,n}}\big)^{2k}\big] : n\geq 0 \big\}$ is a bounded sequence, which proves the lemma.  
\end{proof}
 Also, in order to prove the fourth point of Theorem \ref{thm1} we will need estimates on the local time of $Y$
taken from  \cite{kh-lewis1}, that we collect in the following statement.
\begin{prop}\label{local time}
\begin{enumerate}
\item For every $x\in\R$, $p \in \N^{*}$ and $t > 0$, we have
\[
E\big[( L_t^{x}(Y))^p\big] \leq 2\,E\big[(L_1^0(Y))^p\big]\,t^{p/2}\,
{\rm exp}\left(-\frac{x^2}{2t}\right).
\]
\item There exists a positive constant $\mu$ such that, for every $a,b\in\R$ with $ab\geq 0$
and $t >  0$,
\[
E\big[|L_t^b(Y)-L_t^a(Y)|^2\big]^{1/2}\leq\mu\,\sqrt{|b-a|}\,t^{1/4}\,
{\rm exp}\left(-\frac{a^2}{4t}\right).
\]
\item There exists a positive random variable $K\in L^8$ such that, for every $j\in\Z$, every $n\geq 0$
and every $t > 0$, one has that
\[
|{\cal L}_{j,n}(t)-L_t^{j2^{-n/2}}(Y)|\leq 2Kn2^{-n/4}\sqrt{L_t^{j2^{-n/2}}(Y)},
\]
where ${\cal L}_{j,n}(t)=2^{-n/2}(U_{j,n}(t)+D_{j,n}(t))$.
\end{enumerate}
\end{prop}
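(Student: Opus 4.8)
The three estimates are classical facts about the local time of the one-dimensional Brownian motion $Y$, and in the write-up it is legitimate to simply quote \cite{kh-lewis1}; I indicate below how one would establish them. The plan rests on two ingredients used repeatedly: the Brownian scaling of local time, namely that $\big(L_{ct}^{\sqrt{c}\,x}(Y)\big)_{x\in\R,\,t\geq0}$ has the same law as $\big(\sqrt{c}\,L_t^x(Y)\big)_{x\in\R,\,t\geq0}$ for every $c>0$; and the strong Markov property applied at the first passage time $\tau_a:=\inf\{s>0:\,Y_s=a\}$, together with the elementary fact that $L_s^a(Y)=0$ for all $s<\tau_a$. Parts (1) and (2) follow by short arguments of this kind; part (3), the quantitative comparison between the rescaled crossing counts $\mathcal L_{j,n}(t)$ and $L_t^{j2^{-n/2}}(Y)$, is the only genuinely delicate point, and is where essentially all the work concentrates.

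For part (1), scaling with $c=1/t$ gives $E[(L_t^x(Y))^p]=t^{p/2}\,E[(L_1^{x/\sqrt t}(Y))^p]$, so it suffices to prove $E[(L_1^a(Y))^p]\leq 2\,E[(L_1^0(Y))^p]\,e^{-a^2/2}$ for every $a\in\R$. On $\{\tau_a\geq1\}$ one has $L_1^a(Y)=0$; on $\{\tau_a<1\}$, the strong Markov property at $\tau_a$ gives $L_1^a(Y)=L_{1-\tau_a}^0(\widetilde Y)$ with $\widetilde Y$ a Brownian motion independent of $\tau_a$, and conditioning on $\tau_a$ and applying scaling once more yields $E[(L_1^a(Y))^p]=E\big[(1-\tau_a)^{p/2}\,\1_{\{\tau_a<1\}}\big]\,E[(L_1^0(Y))^p]$. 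Finally $E\big[(1-\tau_a)^{p/2}\,\1_{\{\tau_a<1\}}\big]\leq P(\tau_a<1)=P(|Y_1|\geq|a|)\leq e^{-a^2/2}$ by the reflection principle and the Gaussian tail bound, so the constant $2$ is ample slack; undoing the scaling gives the claim. Part (2) is handled the same way: scaling with $c=1/t$ reduces it to $E[|L_1^b(Y)-L_1^a(Y)|^2]^{1/2}\leq\mu\,\sqrt{|b-a|}\,e^{-a^2/4}$ when $ab\geq0$, and by the symmetry $Y\mapsto-Y$ we may assume $0\leq a\leq b$. On $\{\tau_a\geq1\}$, $L_1^b(Y)=L_1^a(Y)=0$; on $\{\tau_a<1\}$, the strong Markov property at $\tau_a$ gives $L_1^b(Y)-L_1^a(Y)=L_{1-\tau_a}^{b-a}(\widetilde Y)-L_{1-\tau_a}^0(\widetilde Y)$ with $\widetilde Y$ independent of $\tau_a$. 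Conditioning on $\tau_a$, rescaling, and invoking the classical $L^2$ increment estimate $E[(L_1^u(Y)-L_1^0(Y))^2]\leq C\,|u|$ (a special case of $E[(L_1^u(Y)-L_1^v(Y))^{2k}]\leq C_k\,|u-v|^k$), one gets $E[|L_1^b(Y)-L_1^a(Y)|^2]\leq C(b-a)\,E\big[(1-\tau_a)^{1/2}\,\1_{\{\tau_a<1\}}\big]\leq C(b-a)\,e^{-a^2/2}$ by the tail bound from part (1); taking square roots gives $\mu=\sqrt C$.

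The hard part is (3). Here $\mathcal L_{j,n}(t)=2^{-n/2}(U_{j,n}(t)+D_{j,n}(t))$, with $U_{j,n}(t)$ and $D_{j,n}(t)$ the numbers of up- and down-crossings of the level $x=j2^{-n/2}$ by the walk $\mathscr Y_n$ up to the index $\lfloor 2^n t\rfloor$, is the discrete surrogate for $L_t^{x}(Y)$ furnished by Lévy's crossing representation of Brownian local time, and one must make the approximation quantitative. The approach is to expand $\mathcal L_{j,n}(t)-L_t^{x}(Y)$ as a sum, indexed by the successive crossings of the level $x$ on the grid $\mathscr D_n$, of centered contributions (one per excursion of $Y-x$ across a window of width $2^{-n/2}$), plus a boundary term coming from the incomplete excursion straddling time $t$; the straddling term is estimated directly, while a Burkholder--Davis--Gundy / maximal inequality bounds the martingale part by a constant times the square root of the number of crossings, and that number is itself of order $2^{n/2}L_t^{x}(Y)$ — which is exactly what produces the factor $2^{-n/4}\sqrt{L_t^{x}(Y)}$. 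The additional factor $n$, and above all the existence of a single random constant $K\in L^8$ valid simultaneously for every $j\in\Z$, every $n\geq0$ and every $t>0$, come from a Borel--Cantelli / chaining argument: one needs moment bounds of sufficiently high order (eighth is enough) for the normalized errors that are summable over the grid, the summability in $j$ being supplied by the Gaussian-in-$j$ decay of part (1) and the summability in $n$ by the geometric gain $2^{-n/4}$ against the polynomial loss. This uniform eighth-moment bookkeeping, carried out uniformly in $t$, is the laborious step, and in the present paper it is cleanest to invoke \cite[Section 2]{kh-lewis1} for it.
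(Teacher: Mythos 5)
The paper gives no proof of this proposition at all — it is collected verbatim from Khoshnevisan–Lewis \cite{kh-lewis1} — so your decision to quote that reference is exactly what the paper does, and your supplementary sketches of (1) and (2) via Brownian scaling plus the strong Markov property at the first passage time are correct (note only that the bound $\exp(-a^2/(4t))$ in (2) singles out $a$ as the level nearer the origin, which your reduction to $0\leq a\leq b$ implicitly assumes and which is how the estimate is meant to be read). Your outline of (3) is likewise consistent with the crossing-representation argument carried out in the cited source, so the proposal matches the paper's treatment.
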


\subsection{Notation}
  
Throughout all the forthcoming proofs, we shall use the following notation.
 For all $t\in \R$ and $n \in \N$, we define $X^{(n)}_t := 2^{\frac{nH}{2}}X_{t2^{-\frac{n}{2}}}$.
For all $k \in \Z$ and $H \in (0,1)$, we write 
\begin{equation}\label{rho}
\rho(k) = \frac{1}{2}(|k+1|^{2H} + |k-1|^{2H} -2|k|^{2H}),
\end{equation}
it is clear  that $\rho(-k)=\rho(k)$. Observe that, by (\ref{inner product}), we have
\begin{eqnarray}
|\langle \delta_{(k+1)2^{-n/2}} ; \delta_{(l+1)2^{-n/2}}\rangle_\mathscr{H}| &=& \big|E[(X_{(k+1)2^{-n/2}}-X_{k2^{-n/2}})(X_{(l+1)2^{-n/2}}-X_{l2^{-n/2}})] \big|\notag\\
& =& \big|2^{-nH-1}(|k-l+1|^{2H} + |k-l-1|^{2H} -2|k-l|^{2H})\big|\notag\\
& =& 2^{-nH}|\rho(k-l)|. \label{inner product 2}
\end{eqnarray}

If $H\leq \frac12$, for all $r \in \N^*$, we define 
\begin{equation}\label{alpha}
\alpha_{r}:=\sqrt{r!\sum_{a\in\Z}\rho(a)^{r}}.
\end{equation}
Note that
$\sum_{a\in\Z}|\rho(a)|^{r} < \infty$ if and only if $H < 1- 1/(2r)$, which is satisfied for all $r \geq 1 $ if we suppose that $H \leq 1/2$ (in the case $H=1/2$, we have $\rho(0)=1$ and $\rho(a) =0$ for all $a \neq 0$. So, for any $r \in \N^*$, we have $\sum_{a\in \Z}|\rho(a)|^r=1$).

For simplicity, throughout the paper we remove the subscript $\mathscr{H}$ in the inner product defined in (\ref{inner product}), that is, we write $\langle\:\: ;\:\: \rangle$ instead of $\langle\:\: ;\:\: \rangle_\mathscr{H}$.

For any sufficiently smooth function $f : \R \to \R$, the notation $\partial^{l}f$ means that $f$ is differentiated $l$ times. We denote for any $j \in \Z$ , $\Delta_{j,n} f(X):=  \frac12(f(X_{j2^{-n/2}})+ f(X_{(j+1)2^{-n/2}})).$

In the proofs contained in this paper, $C$ shall denote a positive, finite constant that may change value from line to line.

\section{Preparation to the proof of Theorem \ref{thm1}}

\subsection{A key algebraic lemma}
For each
integer $n\geq 1$, $k\in\Z$ and real number $t\geq 0$, let $U_{j,n}(t)$ (resp.
$D_{j,n}(t)$) denote the number of \textit{upcrossings} (resp.
\textit{downcrossings}) of the interval
$[j2^{-n/2},(j+1)2^{-n/2}]$ within the first $\lfloor 2^n
t\rfloor$ steps of the random walk $\{Y_{T_{k,n}}\}_{k\geq 0}$, that is,
\begin{eqnarray}
U_{j,n}(t)=\sharp\big\{k=0,\ldots,\lfloor 2^nt\rfloor -1 :&&
\notag
\\ Y_{T_{k,n}}\!\!\!\!&=&\!\!\!\!j2^{-n/2}\mbox{ and }Y_{T_{k+1,n}}=(j+1)2^{-n/2}
\big\}; \notag\\
D_{j,n}(t)=\sharp\big\{k=0,\ldots,\lfloor 2^nt\rfloor -1:&&
\notag
\\ Y_{T_{k,n}}\!\!\!\!&=&\!\!\!\!(j+1)2^{-n/2}\mbox{ and }Y_{T_{k+1,n}}=j2^{-n/2}
\big\}.\notag
\end{eqnarray}
The following lemma taken from \cite[Lemma 2.4]{kh-lewis1} is going to be the key when
studying the asymptotic behavior of the weighted power variation $V_n^{(r)}(f,t)$ of order $r\geq 1$, defined as:
\begin{equation}
V_n^{(r)}(f,t)=\sum_{k=0}^{\lfloor 2^n t \rfloor -1} \frac12\big(f(Z_{T_{k,n}})+f(Z_{T_{k+1,n}})\big)\big[\big(2^{\frac{nH}{2}}(Z_{T_{k+1,n}}-Z_{T_{k,n}})\big)^r - \mu_r\big],\quad t\geq 0, \label{Vn}
\end{equation}
where $\mu_r:= E[N^r]$, with $N \sim \mathcal{N}(0,1)$.
Its main feature is to separate $X$ from $Y$, thus providing a representation of
$V_n^{(r)}(f,t)$ which is amenable to analysis.

\begin{lemme}\label{lemme-algebric}
Fix $f\in C^\infty_b$, $t\geq 0$ and $r\in \N^{*}$.
Then
\begin{eqnarray}
&&V_n^{(r)}(f,t) = \sum_{j\in\Z}
\frac12\left(
f(X_{j2^{-\frac{n}{2}}}) + f(X_{(j+1)2^{-\frac{n}{2}}})\right) \big[\big(2^{\frac{nH}{2}}(X_{(j+1)2^{-\frac{n}2}}-
X_{j2^{-\frac{n}{2}}})\big)^r - \mu_r \big] \notag\\
&& \hspace{6cm}\times\big(U_{j,n}(t)+ (-1)^rD_{j,n}(t)\big).
\end{eqnarray}
\end{lemme}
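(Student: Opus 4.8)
The plan is to exploit the conditional independence structure: conditionally on the Brownian motion $Y$, the sampled process $\{Z_{T_{k,n}}\}_k = \{X_{Y_{T_{k,n}}}\}_k$ is obtained by reading the fixed fractional Brownian motion $X$ along the trajectory of the random walk $\mathscr{Y}_n = \{Y_{T_{k,n}}\}_k$ on the grid $\mathscr{D}_n$. So the idea is to reorganize the sum defining $V_n^{(r)}(f,t)$ in \eqref{Vn}, which is indexed by the \emph{time} steps $k = 0,\ldots,\lfloor 2^n t\rfloor - 1$ of the walk, into a sum indexed by the \emph{spatial} edges $[j2^{-n/2},(j+1)2^{-n/2}]$, $j\in\Z$, of the dyadic grid $\mathscr{D}_n$.

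First I would observe that, because $\mathscr{Y}_n$ is a simple symmetric nearest-neighbour walk on $\mathscr{D}_n$, each step $k$ corresponds to traversing exactly one edge of the grid, either upward (from $j2^{-n/2}$ to $(j+1)2^{-n/2}$) or downward (from $(j+1)2^{-n/2}$ to $j2^{-n/2}$) for a unique $j = j(k)$. For such a step the increment $Z_{T_{k+1,n}} - Z_{T_{k,n}}$ equals $X_{(j+1)2^{-n/2}} - X_{j2^{-n/2}}$ if the step is an upcrossing of edge $j$, and equals $X_{j2^{-n/2}} - X_{(j+1)2^{-n/2}} = -(X_{(j+1)2^{-n/2}} - X_{j2^{-n/2}})$ if it is a downcrossing. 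Correspondingly the weight $\frac12(f(Z_{T_{k,n}}) + f(Z_{T_{k+1,n}}))$ equals $\frac12(f(X_{j2^{-n/2}}) + f(X_{(j+1)2^{-n/2}}))$ in either case, since $f$ is evaluated at the two endpoints of the edge regardless of direction. Therefore the $k$-th summand in \eqref{Vn} equals
\[
\tfrac12\big(f(X_{j2^{-n/2}}) + f(X_{(j+1)2^{-n/2}})\big)\big[\big(2^{nH/2}(X_{(j+1)2^{-n/2}} - X_{j2^{-n/2}})\big)^r - \mu_r\big]
\]
when the step is an upcrossing of edge $j$, and the same expression multiplied by $(-1)^r$ when it is a downcrossing (the factor $(-1)^r$ coming only from the $r$-th power of the increment, since $\mu_r$ and the weight are unchanged — this is precisely where the symmetry of the weight, emphasized before Lemma \ref{lemme-algebric}, is used, because an asymmetric weight would not match up between up- and downcrossings).

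Then I would group the sum over $k$ according to which edge $j$ is traversed: among the $\lfloor 2^n t\rfloor$ steps, exactly $U_{j,n}(t)$ are upcrossings of edge $j$ and exactly $D_{j,n}(t)$ are downcrossings of edge $j$, by the very definition of $U_{j,n}$ and $D_{j,n}$. Summing the contributions gives, for each fixed $j$, a factor $U_{j,n}(t) + (-1)^r D_{j,n}(t)$ multiplying the edge expression above, and summing over $j\in\Z$ yields the claimed formula (the sum over $j$ is really finite, since only edges visited by the walk up to step $\lfloor 2^n t\rfloor$ contribute). The only genuine point requiring care — and the main obstacle, though a minor one — is the bookkeeping that the decomposition of $\{0,\ldots,\lfloor 2^n t\rfloor - 1\}$ into up/down crossings of the various edges is an exact partition: every step is a nearest-neighbour move hence crosses exactly one edge in exactly one direction, so no step is counted twice and none is omitted. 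This is immediate from the structure of $\mathscr{Y}_n$ as a simple random walk on $\mathscr{D}_n$, and once it is noted the identity follows by simply rearranging a finite sum. (This is exactly \cite[Lemma 2.4]{kh-lewis1}, adapted here to the $r$-th power weighted variation with symmetric weights.)
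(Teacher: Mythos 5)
Your proposal is correct, and it is essentially the intended argument: the paper does not prove Lemma \ref{lemme-algebric} at all but imports it from \cite[Lemma 2.4]{kh-lewis1}, where exactly this up/down-crossing bookkeeping is carried out, and your reduction of the time-indexed sum (\ref{Vn}) to a spatial sum over edges weighted by $U_{j,n}(t)+(-1)^rD_{j,n}(t)$ is the right reconstruction. One small imprecision is worth fixing: a downcrossing of the edge $j$ contributes
$\frac12\big(f(X_{j2^{-n/2}})+f(X_{(j+1)2^{-n/2}})\big)\big[(-1)^r\big(2^{nH/2}(X_{(j+1)2^{-n/2}}-X_{j2^{-n/2}})\big)^r-\mu_r\big]$,
which is \emph{not} literally $(-1)^r$ times the upcrossing summand (as you assert) unless $(-1)^r\mu_r=\mu_r$. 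That identity does hold in every case --- for $r$ even it is trivial, and for $r$ odd one has $\mu_r=E[N^r]=0$ --- but this one-line check is a genuine (if tiny) step and should be stated explicitly rather than subsumed under ``$\mu_r$ and the weight are unchanged,'' since as written your parenthetical is internally inconsistent (if $\mu_r$ really were carried along unchanged, the bracket would be $[(-1)^rx^r-\mu_r]$, not $(-1)^r[x^r-\mu_r]$). With that remark added, your identification of the symmetric weight as the reason the up- and downcrossing weights coincide, and your observation that the steps of the walk partition exactly into up- and downcrossings of the edges, complete the proof.
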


\subsection{Transforming the weighted power variations of odd order}

By \cite[Lemma 2.5]{kh-lewis1}, one has
\[
U_{j,n}(t) - D_{j,n}(t)= \left\{
\begin{array}{lcl}
 1_{\{0\leq j< j^*(n,t)\}} & &\mbox{if $j^*(n,t) > 0$}\\
 0 & &\mbox{if $j^{*}(n,t) = 0$}\\
  -1_{\{j^*(n,t)\leq j<0\}} & &\mbox{if $j^*(n,t)< 0$}
  \end{array}
\right.,
\]
where $j^*(n,t)=2^{n/2}Y_{T_{\lfloor 2^n t\rfloor,n}}$.
As a consequence, $V_n^{(2r-1)}(f,t)$ is equal to
\begin{eqnarray*}
\left\{
\begin{array}{lcl}
\sum_{j=0}^{j^*(n,t)-1}\frac12\big(f(X^+_{j2^{-n/2}}) + f(X^+_{(j+1)2^{-n/2}})\big) \big(X^{n,+}_{j+1}- X^{n,+}_j\big)^{2r-1} & &\mbox{if $j^*(n,t) > 0$}\\
  0 &&\mbox{if $j^{*}(n,t) = 0$}\\
\sum_{j=0}^{|j^*(n,t)|-1} \frac12\big(f(X^-_{j2^{-n/2}}) + f(X^-_{(j+1)2^{-n/2}})\big) \big(X^{n,-}_{j+1}- X^{n,-}_j\big)^{2r-1} & &\mbox{if $j^*(n,t) < 0$}
  \end{array}
\right.,
\end{eqnarray*}
where $X^+_t := X_t$ for $t\geq 0$, $X^-_{-t} :=X_t$ for $t<0$, $X^{n,+}_{t} := 2^{\frac{nH}{2}}X^+_{2^{-\frac{n}{2}}t}$ for $ t \geq 0$ and $X^{n,-}_{-t} := 2^{\frac{nH}{2}}X^-_{2^{-\frac{n}{2}}(-t)}$ for $t < 0$.

Let us now
introduce the following sequence of processes $W_{\pm,n}^{(2r-1)}$, in which $H_p$ stands for the $p$th Hermite polynomial ($H_1(x) =x$, $H_2(x) =x^2 -1$, etc.):
\begin{equation}
W_{\pm,n}^{(2r-1)}(f,t)=
 \sum_{j=0}^{\lfloor 2^{n/2}t\rfloor -1} \frac12\big(f(X^\pm_{j2^{-\frac{n}{2}}}) + f(X^\pm_{(j+1)2^{-\frac{n}{2}}})\big) H_{2r-1}(X^{n,\pm}_{j+1}- X^{n,\pm}_{j}),\quad t \geq 0 \label{Wn}
 \end{equation}
 \begin{equation}
 W_{n}^{(2r-1)}(f,t):=\left \{ \begin{array}{lc}
                      W_{+,n}^{(2r-1)}(f,t) &\text{if $t \geq 0$}\notag\\
                      W_{-,n}^{(2r-1)}(f,-t) &\text{if $t < 0$}
                      \end{array}
                      \right. .
\end{equation}
We then have, using the decomposition
\begin{equation}
x^{2r-1}=\sum_{i=1}^{r}\kappa_{r,i}H_{2i-1}(x), \label{hermite polynomial}
\end{equation}
 \big(with $\kappa_{r,r}=1$, and $\kappa_{r,1}=\frac{(2r)!}{r!2^r}=E[N^{2r}]$, with $N\sim \mathcal{N}(0,1)$. If interested, the reader can find the explicit value of $\kappa_{r,i}$, for $1<i<r$, e.g., in \cite[Corollary 1.2]{RZ1}\big),
\begin{eqnarray}
V_n^{(2r-1)}(f,t)
= \sum_{i=1}^{r}\kappa_{r,i}W_{n}^{(2i-1)}(f,Y_{T_{\lfloor 2^n t\rfloor,n}}).
\label{transforming}
\end{eqnarray}

\section{Proofs of (\ref{H >1/6 , r=1}) and (\ref{H = 1/6 , r=1})}

\subsection{Proof of (\ref{H >1/6 , r=1})}
In \cite[Theorem 2.1]{RZ2}, we have proved that for $H > \frac16$ and  $f \in C_b^{\infty}$, the following change-of-variable formula holds true
\begin{equation}
F(Z_t) -F(0) = \int_0^t f(Z_s)d^\circ Z_s, \quad t\geq 0 \label{N-Z H> 1/6}
\end{equation}
where $F$ is a primitive of $f$ and $\int_0^t f(Z_s)d^\circ Z_s$ is the limit in probability of $2^{-\frac{nH}{2}}V_n^{(1)}(f,t)$ as $n \to \infty$, with $V_n^{(1)}(f,t)$ defined in (\ref{Vn}). On the other hand, it has been proved in \cite[Theorem 4]{NNT} (see also \cite[Theorem 1.3]{RZ3} for an extension of this formula to the bi-dimensional case) that for  all $t\in \R$, the following change-of-variable formula holds true for $H > \frac16$
\begin{equation}
F(X_t)-F(0) = \int_0^t f(X_s)d^{\circ}X_s, \label{N-Tudor}
\end{equation}
where  $\int_0^t f(X_s)d^{\circ}X_s$ is the Stratonovich integral of $f(X)$ with respect to $X$ defined as the limit in probability of $2^{-\frac{nH}{2}}W_{n}^{(1)}(f,t)$ as $n \to \infty$, with $W_{n}^{(1)}(f,t)$ defined in (\ref{Wn}). 
Thanks to (\ref{N-Tudor}), we deduce that \[ F(Z_t) -F(0) = \int_0^{Y_t} f(X_s)d^{\circ}X_s, \quad t\geq0 \] by combining this last equality with (\ref{N-Z H> 1/6}), we get $\int_0^t f(Z_s)d^\circ Z_s = \int_0^{Y_t} f(X_s)d^{\circ}X_s$. So, we deduce that, for $H>\frac16$,
\begin{equation*}
\sum_{k=0}^{\lfloor 2^n t \rfloor -1} \frac12\big(f(Z_{T_{k,n}})+f(Z_{T_{k+1,n}})\big)(Z_{T_{k+1,n}}-Z_{T_{k,n}})\underset{n\to \infty}{\overset{P}{\longrightarrow}} \int_0^{Y_t} f(X_s)d^{\circ}X_s, 
\end{equation*}
thus (\ref{H >1/6 , r=1}) holds true.

\subsection{Proof of (\ref{H = 1/6 , r=1})}
In \cite[Theorem 2.1]{RZ2}, we have proved that for $H = \frac16$ and  $f \in C_b^{\infty}$, the following change-of-variable formula holds true
\begin{eqnarray}
 F(Z_t) - F(0) + \frac{\kappa_3}{12}\int_0^{Y_t}f''(X_s)dW_s  &\overset{\rm (law)}{=}& \int_0^{t}f(Z_s)d^{\circ}Z_s, \quad t\geq 0 \label{N-Z H=1/6}
\end{eqnarray}
where $F$ is a primitive of $f$, $W$ is a standard two-sided Brownian motion independent of the pair $(X,Y)$, $\kappa_3\simeq 2.322$ and $\int_0^{t}f(Z_s)d^{\circ}Z_s$ is the limit in law of $2^{-\frac{nH}{2}}V_n^{(1)}(f,t)$ as $n\to \infty$, with $V_n^{(1)}(f,t)$ defined in (\ref{Vn}). On the other hand, it has been proved in (2.19) in \cite{NRS} that for  all $t\in \R$, the following change-of-variable formula holds true for $H = \frac16$
\begin{equation}
F(X_t)-F(0) + \frac{\kappa_3}{12}\int_0^{t}f''(X_s)dW_s=\int_0^tf(X_s)d^{*}X_s, \label{N-Swan}
\end{equation}
where  $\kappa_3$ and $W$ are the same as in (\ref{N-Z H=1/6}), $\int_0^t f(X_s)d^{*}X_s$ is the Stratonovich integral of $f(X)$ with respect to $X$ defined as the limit in law of $2^{-\frac{nH}{2}}W_{n}^{(1)}(f,t)$ as $n \to \infty$, with $W_{n}^{(1)}(f,t)$ defined in (\ref{Wn}). 
Thanks to (\ref{N-Swan}), we deduce that \[ F(Z_t)-F(0) + \frac{\kappa_3}{12}\int_0^{Y_t}f''(X_s)dW_s=\int_0^{Y_t}f(X_s)d^* X_s, \quad t\geq0. \] By combining this last equality with (\ref{N-Z H=1/6}), we get $\int_0^t f(Z_s)d^\circ Z_s \overset{law}{=} \int_0^{Y_t} f(X_s)d^{*}X_s$. So, we deduce that, for $H=\frac16$,
\[
\sum_{k=0}^{\lfloor 2^n t \rfloor -1} \frac12\big(f(Z_{T_{k,n}})+f(Z_{T_{k+1,n}})\big)(Z_{T_{k+1,n}}-Z_{T_{k,n}})\underset{n\to \infty}{\overset{law}{\longrightarrow}} \int_0^{Y_t} f(X_s)d^{*}X_s,
\]
thus (\ref{H = 1/6 , r=1}) holds true.

\section{Proof of (\ref{fdd thm1})}
 Thanks to (\ref{Vn}) and  (\ref{transforming}), for any integer $r\geq 2$, we have
\begin{eqnarray} 
2^{-n/4}V_n^{(2r-1)}(f,t)&=& 2^{-n/4} \sum_{k=0}^{\lfloor 2^n t\rfloor-1}\frac12\big(f(Z_{T_{k,n}})+f(Z_{T_{k+1,n}})\big) (2^{\frac{nH}{2}}(Z_{T_{k+1,n}}- Z_{T_{k,n}}))^{2r-1}\notag \\
&=& 2^{-n/4}\sum_{l=1}^{r}\kappa_{r,l}W_{n}^{(2l-1)}(f,Y_{T_{\lfloor 2^n t\rfloor,n}}) \label{decomposition1 thm1}
\end{eqnarray}
The proof of (\ref{fdd thm1}) will be done in several steps.

\subsection{\underline{Step 1}: Limit of $2^{-n/4}\sum_{l=2}^{r}\kappa_{r,l}W_{n}^{(2l-1)}(f,t)$ }
Observe that, by (\ref{hermite polynomial}), we have
\begin{eqnarray*}
 &&\sum_{j=0}^{\lfloor 2^{n/2}t\rfloor -1} \frac12\big(f(X^\pm_{j2^{-\frac{n}{2}}}) + f(X^\pm_{(j+1)2^{-\frac{n}{2}}})\big) (X^{n,\pm}_{j+1}- X^{n,\pm}_{j})^{2r-1} = \sum_{l=1}^{r} \kappa_{r,l} W_{\pm,n}^{(2l-1)}(f,t).
 \end{eqnarray*}
We have the following proposition:
\begin{prop}\label{principal proposition} If $H \in ( \frac16, \frac12)$, if $r\geq 2$  then,
for any $f \in C_b^{\infty}$, 
\begin{equation}\label{principal conv2} 
\bigg(X_x, 2^{-\frac{n}{4}}\sum_{l=2}^{r} \kappa_{r,l} W_{\pm,n}^{(2l-1)}(f,t)\bigg)_{x\in \R,\, t \geq 0} \underset{n\to \infty}{\overset{f.d.d.}{\longrightarrow}} \bigg(X_x, \beta_{2r-1}\int_0^{t}f(X^\pm_s)dW^\pm_s \bigg)_{x\in \R,\, t \geq 0},
\end{equation}
where $\beta_{2r-1} = \sqrt{\sum_{l=2}^r\kappa^2_{r,l}\,\alpha^2_{2l-1}}$,  $\alpha_{2l-1}$ is given by (\ref{alpha}), $W^{+}_t=W_t$ if $t>0$ and
$W^{-}_t=W_{-t}$ if $t<0$, with $W$ a two-sided Brownian motion independent of $(X,Y)$, and where $\int_0^t f(X^{\pm}_s)dW^{\pm}_s$ must be  understood in the Wiener-It\^o sense.
\end{prop}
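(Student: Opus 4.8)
The plan is to establish the joint f.d.d. convergence in Proposition~\ref{principal proposition} via the standard machinery of Malliavin calculus combined with a chaos-expansion/Breuer--Major-type argument, treating the two one-sided pieces $W_{+,n}$ and $W_{-,n}$ separately (by symmetry it suffices to argue for $W_{+,n}$, which I abbreviate $W_n$ below, on $t\ge 0$). First I would reduce the statement to a single building block: for each fixed $l\in\{2,\dots,r\}$ write, using the Leibniz-type decomposition and the isometry $I_{2l-1}$,
\[
2^{-\frac n4}W_{\pm,n}^{(2l-1)}(f,t)=I_{2l-1}\!\left(2^{-\frac n4}\sum_{j=0}^{\lfloor 2^{n/2}t\rfloor-1}\Delta_{j,n}f(X)\,\delta_{(j+1)2^{-n/2}}^{\otimes(2l-1)}\right)+\text{(lower-order correction terms)},
\]
where the correction terms come from the derivatives of $f$ hitting the increments in the Leibniz formula (\ref{Leibnitz1}); these corrections I expect to vanish in $L^2$ as $n\to\infty$ using the bounds (\ref{0}) of Lemma~\ref{tech-lemma} to control $|\langle\varepsilon_u^{\otimes q},\delta^{\otimes q}\rangle|\le 2^{-nqH}$. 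Then I would identify the dominant term $\Phi_n^{(l)}(f,t):=I_{2l-1}(\cdot)$ as the object whose limit must be computed.

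The heart of the proof is then to show that the vector $(X_{x_1},\dots,X_{x_d},\Phi_n^{(2)}(f,t_1),\dots)$ converges in law to $(X_{x_1},\dots,\beta_{2r-1}\int_0^{t}f(X_s)\,dW_s$-components$)$. I would do this by a characteristic-function / small-blocks argument conditionally on $X$: since $W$ in the limit is independent of $X$, the natural route is to prove that conditionally on the $\sigma$-field generated by $X$ the relevant sums converge to a Gaussian (with $X$-measurable variance), and to identify that conditional variance. Concretely, I would (i) compute $\lim_n \mathrm{Var}$ of each $\Phi_n^{(l)}(f,t)$ and the asymptotic covariance between different $l$'s and different times, showing the cross terms between distinct odd orders $2l-1\ne 2l'-1$ vanish (orthogonality of Hermite ranks, together with (\ref{2}) applied with $r=l$), while the diagonal term produces $\kappa_{r,l}^2\alpha_{2l-1}^2\int_0^t f(X_s)^2\,ds$ via (\ref{2}): the sum $\sum_{k,l}|\langle\delta_{(k+1)},\delta_{(l+1)}\rangle|^{2l-1}2^{-n/2}\to t\sum_{a}\rho(a)^{2l-1}$ after the normalization, which is exactly where $\alpha_{2l-1}^2=(2l-1)!\sum_a\rho(a)^{2l-1}$ and the prefactor $2^{-n/4}$ (squared: $2^{-n/2}$) enter, and the weight $\Delta_{j,n}f(X)^2$ Riemann-sums to $\int f(X_s)^2 ds$; (ii) verify an asymptotic-Gaussianity criterion — either the fourth-moment/contraction condition (show $\|\Phi_n^{(l)}\otimes_p\Phi_n^{(l)}\|\to 0$ for $0<p<2l-1$ using (\ref{2})--(\ref{4})) or, more robustly given the weights $\Delta_{j,n}f(X)$, a direct conditional-on-$X$ CLT via a martingale/blocking decomposition as in \cite{NNT},\cite{NP}; and (iii) match the limit: a Gaussian with conditional variance $\beta_{2r-1}^2\int_0^t f(X_s)^2 ds$ is exactly $\beta_{2r-1}\int_0^t f(X_s)\,dW_s$ in the Wiener--It\^o sense, which also handles the joint statement with $(X_x)_{x\in\R}$ since that limit is $X$-measurable-coefficient-times-independent-Gaussian.

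I would organize the write-up as: Step~1a, the Leibniz reduction and negligibility of correction terms (uses (\ref{0})); Step~1b, computation of asymptotic variances/covariances (uses (\ref{2}) with constant $C_{H,r}$, valid since $H<\frac12<1-\frac1{2r}$ for all $r$); Step~1c, the conditional CLT and identification of the limit; Step~1d, assembling the joint convergence with $X_x$ by noting that conditioning on $X$ freezes the weights and the target integrand. For the joint-with-$X$ part I would invoke the general principle (used repeatedly in \cite{NNT,NP,RZ1}) that if $G_n\to G$ stably with respect to the $\sigma$-field generated by $X$, then $(X,G_n)\to(X,G)$ in law; stable convergence conditionally on $X$ is what the conditional-CLT in Step~1c actually delivers.

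The main obstacle I anticipate is Step~1c: establishing the conditional central limit theorem in the presence of the random, $X$-dependent weights $\Delta_{j,n}f(X)$, which are \emph{not} independent of the Hermite-type increments $H_{2l-1}(X^{n,+}_{j+1}-X^{n,+}_j)$. The clean fourth-moment theorem does not apply verbatim because of these weights, so one must either linearize the weights (replace $\Delta_{j,n}f(X)$ by a block-constant approximation $f(X_{\lfloor j/m_n\rfloor 2^{-n/2}m_n})$ and control the error via the Malliavin derivative and (\ref{0})) and then apply a chaos CLT on each block, or run a direct martingale-CLT on the filtration of $X$. The bookkeeping of the cross-order and cross-time covariances, and checking that the block-approximation errors are negligible uniformly — that is where the real work lies; the rest is routine application of Lemma~\ref{tech-lemma} and the product formula (\ref{product}).
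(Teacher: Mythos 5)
Your proposal is correct and follows essentially the same route as the paper: the paper also freezes the weight $f$ on a coarse dyadic grid of mesh $2^{-m/2}$ (your ``block-constant approximation''), applies the multidimensional CLT for the unweighted Hermite block sums (imported from \cite{RZ1} rather than re-derived via contraction conditions), obtains the joint law with $(X_x)$ from the Peccati--Tudor theorem using precisely the orthogonality $E[X_xH_{2l-1}(\cdot)]=0$ for $l\geq 2$ (which plays the role of your stable/conditional-CLT step), lets $n\to\infty$ then $m\to\infty$ to identify the Wiener--It\^o integral, and kills the weight-replacement and tail errors in $L^2$ using the duality and product formulas together with Lemma \ref{tech-lemma}. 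The only inessential difference is your preliminary chaos-decomposition of the weighted sum into a dominant ``$I_{2l-1}$'' term, which (having a random kernel) is really a Skorokhod integral and buys nothing; the paper skips it and works directly with the blocked sums.
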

\begin{proof}{}
For all $t\geq 0$, we define $F_{\pm,n}^{(2r-1)}(f,t):= 2^{-\frac{n}{4}}\sum_{l=2}^{r} \kappa_{r,l} W_{\pm,n}^{(2l-1)}(f,t)$. In what follows we may study separately the finite dimensional distributions convergence in law  of $\big(X, F_{+,n}^{(2r-1)}(f, \cdot),$ $ F_{-,n}^{(2r-1)}(f,\cdot)\big)$ when $n$ is even and when $n$ is odd.
For the sake of simplicity, we will only consider the even case, 
the analysis when $n$ is odd being {\sl mutatis mutandis} the same. So,
assume that $n$ is even and let $m$ be another even integer such that $n \geq m \geq 0$. 
We shall apply a coarse gaining argument. We have
 \begin{eqnarray}
  F_{\pm,n}^{(2r-1)}(f, t)&=&  2^{-n/4} \sum_{i=1}^{\lfloor 2^{m/2}t\rfloor} \sum_{j=(i-1)2^{\frac{n-m}{2}}}^{i2^{\frac{n-m}{2}}-1}\frac12\left(f(X^\pm_{j2^{-n/2}}) + f(X^\pm_{(j+1)2^{-n/2}})\right)\label{analyse1}\\
 && \hskip6cm \times \left(\sum_{l=2}^r\kappa_{r,l} H_{2l-1}\big(X^{n,\pm}_{j+1}- X^{n,\pm}_j\big)\right)\notag\\
 &&+ 2^{-n/4}\sum_{j= \lfloor 2^{m/2}t\rfloor 2^{\frac{n-m}{2}}}^{\lfloor 2^{n/2}t\rfloor -1}\frac12\left(f(X^\pm_{j2^{-n/2}}) + f(X^\pm_{(j+1)2^{-n/2}})\right)\label{analyse2}\\
 && \hskip6cm \times \left(\sum_{l=2}^r\kappa_{r,l} H_{2l-1}\big(X^{n,\pm}_{j+1}- X^{n,\pm}_j\big)\right).\notag
 \end{eqnarray}
Observe that $2^{\frac{n-m}{2}}$ is an integer precisely because we have assumed that $n$ and $m$ are even numbers. We have 
 \[
 (\ref{analyse1})= A^\pm_{n,m}(t) + B^\pm_{n,m}(t) + C^\pm_{n,m}(t),
 \]
 where
 \begin{eqnarray*}
 && A^{\pm}_{n,m}(t)= 2^{-n/4} \sum_{i=1}^{\lfloor 2^{m/2}t\rfloor}\frac12\left(f(X^{\pm}_{i2^{-m/2}}) + f(X^{\pm}_{(i+1)2^{-m/2}})\right)
 \sum_{j=(i-1)2^{\frac{n-m}{2}}}^{i2^{\frac{n-m}{2}}-1}\\
 && \hskip7cm \bigg(\sum_{l=2}^r\kappa_{r,l} H_{2l-1}\big(X^{n,\pm}_{j+1}- X^{n,\pm}_j\big)\bigg)
 \end{eqnarray*}
 \begin{eqnarray*}
 && B^{\pm}_{n,m}(t)= 2^{-n/4} \sum_{i=1}^{\lfloor 2^{m/2}t\rfloor}\sum_{j=(i-1)2^{\frac{n-m}{2}}}^{i2^{\frac{n-m}{2}}-1}
 \frac12\left(f(X^{\pm}_{(j+1)2^{-n/2}}) - f(X^{\pm}_{(i+1)2^{-m/2}})\right)\\
 &&\hskip6cm \times \bigg(\sum_{l=2}^r\kappa_{r,l} H_{2l-1}\big(X^{n,\pm}_{j+1}- X^{n,\pm}_j\big)\bigg)
 \end{eqnarray*}
 \begin{eqnarray*}
 && C^{\pm}_{n,m}(t)= 2^{-n/4} \sum_{i=1}^{\lfloor 2^{m/2}t\rfloor}\sum_{j=(i-1)2^{\frac{n-m}{2}}}^{i2^{\frac{n-m}{2}}-1}        
 \frac12\left(f(X^{\pm}_{j2^{-n/2}}) - f(X^{\pm}_{i2^{-m/2}})\right)\\
 &&\hskip6cm \times \bigg(\sum_{l=2}^r\kappa_{r,l} H_{2l-1}\big(X^{n,\pm}_{j+1}- X^{n,\pm}_j\big)\bigg)
 \end{eqnarray*}
 Here is a sketch of what remains to be done in order to complete the proof of (\ref{principal conv2}). 
 Firstly, we will prove {\bf (a)} the f.d.d. convergence in law of  $(X, A^+_{n,m}, A^-_{n,m})$  to $(X, \beta_{2r-1}\int_0^{\cdot}f(X^+_s)dW^+_s,$ $ \beta_{2r-1}\int_0^{\cdot}f(X^-_s)dW^-_s)$  as $n\to\infty$ and then $m\to\infty$. Secondly, we will show that {\bf (b)} $B^{\pm}_{n,m}(t)$ converges to $0$ in $L^2(\Omega)$ as $n\to\infty$ and then $m\to\infty$. By applying the same techniques, we would also obtain that the same holds with $C^{\pm}_{n,m}(t)$. Thirdly, we will prove that {\bf (c)} (\ref{analyse2}) converges to $0$ in $L^2(\Omega)$ as $n\to\infty$ and then $m\to\infty$.
 Once this has been done, one can easily deduce the f.d.d. convergence in law of $(X, F_{+,n}^{(2r-1)}(f, \cdot), F_{-,n}^{(2r-1)}(f,\cdot))$ to $(X, \beta_{2r-1}\int_0^{\cdot}f(X^+_s)dW^+_s, \beta_{2r-1}\int_0^{\cdot}f(X^-_s)dW^-_s)$ as $n\to\infty$, which is equivalent to (\ref{principal conv2}).\\

{\bf  (a) Finite dimensional distributions convergence in law of  $(X, A^+_{n,m}, A^-_{n,m})$}\\
Fix $m$. Showing the f.d.d. convergence in law of $(X, A^+_{n,m}, A^-_{n,m})$ as $n\to\infty$ can be easily reduced to checking the f.d.d. convergence in law
of the following random-vector valued process:
\begin{eqnarray*}
\bigg( X_x: x\in \R,\, 2^{-n/4}\sum_{j=(i-1)2^{\frac{n-m}{2}}}^{i2^{\frac{n-m}{2}}-1}H_{2l-1}\big(X^{n,+}_{j+1}- X^{n,+}_j\big) \:, \: 2^{-n/4}\sum_{j=(i-1)2^{\frac{n-m}{2}}}^{i2^{\frac{n-m}{2}}-1}H_{2l-1}\big(X^{n,-}_{j+1}- X^{n,-}_j\big) :\\ 2 \leq l \leq r, \: 1 \leq i \leq \lfloor 2^{m/2}t \rfloor \bigg).       
\end{eqnarray*}
Thanks to  $(3.27)$ in \cite{RZ1} (see also (3.4) in \cite{RZ1} and page 1073 in \cite{NNT}), we have
\begin{eqnarray*}
  &&\bigg(2^{-n/4}\sum_{j=(i-1)2^{\frac{n-m}{2}}}^{i2^{\frac{n-m}{2}}-1} H_{2l-1}(X_{j+1}^{n,+} - X_j^{n,+}), 2^{-n/4}\sum_{j=(i-1)2^{\frac{n-m}{2}}}^{i2^{\frac{n-m}{2}}-1} H_{2l-1}(X_{j+1}^{n,-} - X_j^{n,-}):\\
  &&\hskip6cm \, 2 \leq l \leq   r ,\: 1 \leq i \leq \lfloor 2^{m/2}t \rfloor \bigg) \underset{n\to \infty}{\overset{ \rm law}{\longrightarrow}}
  \end{eqnarray*} 
  \begin{eqnarray*}
  && \bigg( \alpha_{2l-1}(B_{(i+1)2^{-m/2}}^{l,+} - B_{i2^{-m/2}}^{l,+})\: , \: \alpha_{2l-1}(B_{(i+1)2^{-m/2}}^{l,-} - B_{i2^{-m/2}}^{l,-}):\\
  && \hskip6cm \, 2 \leq l \leq r , 1 \leq i \leq \lfloor 2^{m/2}t \rfloor \bigg)
  \end{eqnarray*}
 where $(B^{(2)}, \ldots , B^{(r)})$ is a $(r-1 )$-dimensional two-sided Brownian motion and $\alpha_{2l-1}$ is defined in (\ref{alpha}),  for all $t \geq 0$, $B_t^{r,+} := B^{(r)}_t$, $B_t^{r,-}:= B^{(r)}_{-t}$. \\
  Since $E[ X_x H_{2r-1}(X_{j+1}^{n,\pm} - X_j^{n,\pm})] = 0$ when $r \geq 2$ (Hermite polynomials of different orders are orthogonal), Peccati-Tudor Theorem (see, e.g., \cite[Theorem $6.2.3$]{NP2}) applies and yields
  \begin{equation*}
   \bigg( X_x, 2^{-n/4}\sum_{j=(i-1)2^{\frac{n-m}{2}}}^{i2^{\frac{n-m}{2}}-1}H_{2l-1}\big(X^{n,+}_{j+1}- X^{n,+}_j\big), 2^{-n/4}\sum_{j=(i-1)2^{\frac{n-m}{2}}}^{i2^{\frac{n-m}{2}}-1}H_{2l-1}\big(X^{n,+}_{j+1}- X^{n,+}_j\big)  : 
  \end{equation*}
  \[
   \hskip5cm 2 \leq l \leq  r, \: 1 \leq i \leq \lfloor 2^{m/2}t \rfloor \bigg)_{x \geq 0}   \underset{n\to \infty}{\overset{\rm f.d.d.}{\longrightarrow}} 
   \]
 \begin{eqnarray*}
  &&\bigg( X_x, \alpha_{2l-1}(B_{(i+1)2^{-m/2}}^{l,+} - B_{i2^{-m/2}}^{l,+}), \alpha_{2l-1}(B_{(i+1)2^{-m/2}}^{l,-} - B_{i2^{-m/2}}^{l,-}) :\\
  && \hskip7cm 2 \leq l \leq  r , 1 \leq i \leq \lfloor 2^{m/2}t \rfloor \bigg)_{x \geq 0},
  \end{eqnarray*}
with $(B^{(2)}, \ldots , B^{(r-1)})$ is  independent of $X$ (and independent of $Y$ as well). 
We then have, as $n\to\infty$ and $m$ is fixed,
\begin{eqnarray*}
&&(X, A^+_{n,m}, A^-_{n,m}) \overset{\rm f.d.d.}{\longrightarrow}\\
&& \bigg(X, \beta_{2r-1}\sum_{i=1}^{\lfloor 2^{m/2}.\rfloor}\frac12\big(f(X^+_{i2^{-m/2}}) + f(X^+_{(i+1)2^{-m/2}})\big)( W^+_{(i+1)2^{-m/2}} - W^+_{i2^{-m/2}}) ,\\
&& \hskip2cm \beta_{2r-1}\sum_{i=1}^{\lfloor 2^{m/2}.\rfloor}\frac12\big(f(X^-_{i2^{-m/2}}) + f(X^-_{(i+1)2^{-m/2}})\big)( W^-_{(i+1)2^{-m/2}} - W^-_{i2^{-m/2}})\bigg),
\end{eqnarray*}
with $\beta_{2r-1} := \sqrt{\sum_{l=2}^r\kappa^2_{r,l}\,\alpha^2_{2l-1}}$ and $W$ is a two-sided Brownian motion independent of $X$ (and independent of $Y$ as well).
One can write
\[\sum_{i=1}^{\lfloor 2^{m/2}t\rfloor}\frac12\big(f(X^{\pm}_{i2^{-m/2}}) + f(X^{\pm}_{(i+1)2^{-m/2}})\big)( W^{\pm}_{(i+1)2^{-m/2}} - W^{\pm}_{i2^{-m/2}}) = K^{\pm}_m(t) + L^{\pm}_m(t),
\]
with
\[
K^{\pm}_m(t)= \sum_{i=1}^{\lfloor 2^{m/2}t\rfloor} f(X^{\pm}_{i2^{-m/2}})(W^{\pm}_{(i+1)2^{-m/2}}- W^{\pm}_{i2^{-m/2}}),
\]
\[
L^{\pm}_m(t) = \sum_{i=1}^{\lfloor 2^{m/2}t\rfloor}\frac12\big(f(X^{\pm}_{(i+1)2^{-m/2}}) - f(X^{\pm}_{i2^{-m/2}})\big)( W^{\pm}_{(i+1)2^{-m/2}} - W^{\pm}_{i2^{-m/2}}).
\]
It is clear that $\displaystyle{K^{\pm}_m(t) \underset{m\to\infty}{\overset{L^2}{\longrightarrow}} \int_0^t f(X^{\pm}_s)dW^{\pm}_s}$. On the other hand $L^{\pm}_m(t)$ converges to $0$ in $L^2$ as $m\to\infty$. Indeed, by independence,
\begin{align}
 &E[L^{\pm}_m(t)^2]\notag\\ 
&= \frac14\sum_{i,j=1}^{\lfloor 2^{m/2}t\rfloor}E\bigg[ \big(f(X^{\pm}_{(i+1)2^{-m/2}}) - f(X^{\pm}_{i2^{-m/2}})\big)\big(f(X^{\pm}_{(j+1)2^{-m/2}}) - f(X^{\pm}_{j2^{-m/2}})\big)\bigg] \notag \\
& \hskip1.6cm \times E\bigg[ (W^{\pm}_{(i+1)2^{-m/2}} - W^{\pm}_{i2^{-m/2}})(W^{\pm}_{(j+1)2^{-m/2}} - W^{\pm}_{j2^{-m/2}})\bigg] \notag \\
&= \frac14\sum_{i=1}^{\lfloor 2^{m/2}t\rfloor}E\bigg[ \big(f(X^{\pm}_{(i+1)2^{-m/2}}) - f(X^{\pm}_{i2^{-m/2}})\big)^2\bigg] \times E\bigg[\big(W^{\pm}_{(i+1)2^{-m/2}} - W^{\pm}_{i2^{-m/2}}\big)^2\bigg] \notag \\
&= \frac{2^{-m/2}}{4} \sum_{i=1}^{\lfloor 2^{m/2}t\rfloor}E\bigg[ f'(X_{\theta_i})^2\big( X^{\pm}_{(i+1)2^{-m/2}}) - X^{\pm}_{i2^{-m/2}} \big)^2\bigg]\label{inequalite},
\end{align}
where $\theta_i$ denotes a random real number satisfying $i2^{-m/2} < \theta_i < (i+1)2^{-m/2}$. Since $f \in C_b^{\infty}$ and by Cauchy-Schwarz inequality, we deduce that  
\begin{align*}
(\ref{inequalite}) &\leq C_f 2^{-m/2} \sum_{i=1}^{\lfloor 2^{m/2}t\rfloor}E\bigg[\big( X^{\pm}_{(i+1)2^{-m/2}}) - X^{\pm}_{i2^{-m/2}} \big)^4\bigg]^{1/2}\\
& = C_f 2^{-m/2}\lfloor 2^{m/2}t\rfloor 2^{-mH} \sqrt{3}\\
& \leq  C_f  2^{-mH} t,  
\end{align*}
from which the claim follows.
Summarizing, we just showed that  
\[(X, A^+_{n,m}, A^-_{n,m}) \overset{\rm f.d.d.}{\longrightarrow} (X, \beta_{2r-1}\int_0^{.} f(X^+_s)dW^+_s,\beta_{2r-1}\int_0^{.} f(X^-_s)dW^-_s) \]
 as $n\to\infty$ then $m\to\infty$.\\

{\bf (b) $B^{\pm}_{n,m}(t)$ converges to $0$ in $L^2(\Omega)$ as $n\to\infty$ and then $m\to\infty$}.

It suffices to prove that for all $k \in \{ 2, \ldots,r\}$,  
\begin{equation}
B^{\pm,k}_{n,m}(t)\overset{L^2}{\longrightarrow} 0, \label{long1}
\end{equation}
 as $n\to\infty$ and then $m\to\infty$, where $B^{\pm,k}_{n,m}(t)$ is defined as follows
\[
B^{\pm,k}_{n,m}(t):= 2^{-n/4} \sum_{i=1}^{\lfloor 2^{m/2}t\rfloor}\sum_{j=(i-1)2^{\frac{n-m}{2}}}^{i2^{\frac{n-m}{2}}-1}
 \frac12\big(f(X^{\pm}_{(j+1)2^{-n/2}}) - f(X^{\pm}_{(i+1)2^{-m/2}})\big) H_{2k-1}\big(X^{n,\pm}_{j+1}- X^{n,\pm}_j\big).
\]
With obvious notation, we have that 
\[
B^{\pm,k}_{n,m}(t)= 2^{-n/4} \sum_{i=1}^{\lfloor 2^{m/2}t\rfloor}\sum_{j=(i-1)2^{\frac{n-m}{2}}}^{i2^{\frac{n-m}{2}}-1}\Delta_{i,j}^{n,m}f(X^{\pm}) H_{2k-1}\big(X^{n,\pm}_{j+1}- X^{n,\pm}_j\big).
\]
It suffices to prove the convergence to $0$ of $B^{+,k}_{n,m}(t)$, the proof for $B^{-,k}_{n,m}(t)$ being exactly the same. In fact, the reader can find this proof in the proof of \cite[Theorem 1, (1.15)]{NNT} at page 1073.

{\bf (c)  (\ref{analyse2}) converges to $0$ in $L^2(\Omega)$ as $n\to\infty$ and then $m\to\infty$}. 
 
It suffices to prove that for all $k \in \{ 2, \ldots,r\}$,  $\displaystyle{ J^{\pm,k}_{n,m}(t)\overset{L^2}{\longrightarrow} 0 }$ as $n\to\infty$ and then $m\to\infty$, where $J^{\pm,k}_{n,m}(t)$ is defined as follows,
\begin{eqnarray*}
&& J_{n,m}^{\pm,k}(t) = 2^{-n/4}\sum_{j= \lfloor 2^{m/2}t\rfloor 2^{\frac{n-m}{2}}}^{\lfloor 2^{n/2}t\rfloor -1}\frac12\big(f(X^{\pm}_{j2^{-n/2}}) + f(X^{\pm}_{(j+1)2^{-n/2}})\big)H_{2k-1}\big(X^{n,\pm}_{j+1}- X^{n,\pm}_j\big)\\
 && \hskip1.4cm = 2^{-n/4}\sum_{j= \lfloor 2^{m/2}t\rfloor 2^{\frac{n-m}{2}}}^{\lfloor 2^{n/2}t\rfloor -1}\Theta_j^n f(X^{\pm}) H_{2k-1}\big(X^{n,\pm}_{j+1}- X^{n,\pm}_j\big),
\end{eqnarray*}
with obvious notation. We will only prove the convergence to 0 of $J_{n,m}^{+,k}(t)$, the proof for $J_{n,m}^{-,k}(t)$ being exactly the same. Using the relationship between Hermite polynomials and multiple stochastic integrals, namely
$H_r\big(2^{nH/2}(X^{+}_{(j+1)2^{-n/2}} - X^{+}_{j2^{-n/2}}) \big) = 2^{nrH/2}I_r(\delta_{(j+1)2^{-n/2}}^{\otimes r})$, we obtain, 
using (\ref{product}) as well,
\begin{eqnarray}
E\big[(J^{+,k}_{n,m}(t))^2 \big] &=& \bigg| 2^{-n/2} 2^{nH(2k-1)}\sum_{j,j'= \lfloor 2^{m/2}t\rfloor 2^{\frac{n-m}{2}}}^{\lfloor 2^{n/2}t \rfloor -1}\sum_{l=0}^{2k-1}l! 
\binom{2k-1}{l}^2 \times
\notag
\end{eqnarray}
\begin{eqnarray}
&&  \hskip1.3cm  E\big[ \Theta_j^n f(X^+)\Theta_{j'}^n f(X^+)I_{2(2k-1)-2l}(\delta_{(j+1)2^{-n/2}}^{\otimes (2k-1-l)}\otimes\delta_{(j'+1)2^{-n/2}}^{\otimes (2k-1-l)})\big] \notag\\
&& \hskip7cm \times \langle \delta_{(j+1)2^{-n/2}}; \delta_{(j'+1)2^{-n/2}}\rangle^l \bigg| \notag\\
&\leq & 2^{-n/2} 2^{nH(2k-1)}\sum_{j,j'= \lfloor 2^{m/2}t\rfloor 2^{\frac{n-m}{2}}}^{\lfloor 2^{n/2}t \rfloor -1}\sum_{l=0}^{2k-1}l!\binom{2k-1}{l}^2 \notag\\
&& \hskip1cm  \times \bigg| E\big[ \Theta_j^n f(X^+)\Theta_{j'}^n f(X^+)I_{2(2k-1)-2l}(\delta_{(j+1)2^{-n/2}}^{\otimes (2k-1-l)}\otimes\delta_{(j'+1)2^{-n/2}}^{\otimes (2k-1-l)})
\big] \bigg| \notag\\
&& \hskip6.7cm \times \big|\langle \delta_{(j+1)2^{-n/2}}; \delta_{(j'+1)2^{-n/2}}\rangle \big|^l \notag \\
&=&  \sum_{l=0}^{2k-1} l!\binom{2k-1}{l}^2 Q_{n,m}^{+,l}(t), \label{grand1}
\end{eqnarray}
with obvious notation. Thanks both to the duality formula (\ref{adjoint}) and to (\ref{Leibnitz1}), we have 
\begin{eqnarray*}
d_n^{(+,l)}(j,j')&:=& E\big[ \Theta_j^n f(X^+)\Theta_{j'}^n f(X^+)I_{2(2k-1)-2l}(\delta_{(j+1)2^{-n/2}}^{\otimes (2k-1-l)}\otimes\delta_{(j'+1)2^{-n/2}}^{\otimes (2k-1-l)})
\big]\\ 
&=& E \big[ \big \langle D^{2(2k-1-l)}(\Theta_j^n f(X^+)\Theta_{j'}^n f(X^+))\: ; \: \delta_{(j+1)2^{-n/2}}^{\otimes (2k-1-l)}\otimes\delta_{(j'+1)2^{-n/2}}^{\otimes (2k-1-l)} \big \rangle \big]\\
&=& \frac{1}{4} \sum_{a=0}^{2(2k-1-l)}\binom{2(2k-1-l)}{a} E\bigg[ \bigg \langle \big(f^{(a)}(X^+_{j2^{-n/2}})\xi_{j2^{-n/2}}^{\otimes a} + f^{(a)}(X^+_{(j+1)2^{-n/2}})\\
&& \times \xi_{(j+1)2^{-n/2}}^{\otimes a} \big)\tilde{\otimes} \big(f^{(2(2k-1-l)-a)}(X^+_{j'2^{-n/2}})\xi_{j'2^{-n/2}}^{\otimes (2(2k-1-l)-a)} + \\
&& f^{(2(2k-1-l)-a)}(X^+_{(j'+1)2^{-n/2}}) \xi_{(j'+1)2^{-n/2}}^{\otimes (2(2k-1-l)-a)} \big) \: ; \: \delta_{(j+1)2^{-n/2}}^{\otimes (2k-1-l)}\otimes\delta_{(j'+1)2^{-n/2}}^{\otimes (2k-1-l)} \bigg \rangle \bigg].
\end{eqnarray*}
At this stage, the proof of the claim ${\bf (c)}$ is going to be different according to the value of $l$:
\begin{itemize}
\item If $l= 2k-1$  in (\ref{grand1}) then
\begin{eqnarray}
&&  Q_{n,m}^{+,2k-1}(t) = 2^{-n/2} 2^{nH(2k-1)}\sum_{j,j'= \lfloor 2^{m/2}t\rfloor 2^{\frac{n-m}{2}}}^{\lfloor 2^{n/2}t \rfloor -1} \bigg|E\big[ \Theta_j^n f(X^+)\Theta_{j'}^n f(X^+)\big]\bigg| \notag\\
&& \hskip8.5cm \times \big|\langle \delta_{(j+1)2^{-n/2}}; \delta_{(j'+1)2^{-n/2}}\rangle \big|^{2k-1} \notag\\
&\leq & C_f 2^{-n/2} 2^{nH(2k-1)}\sum_{j,j'= \lfloor 2^{m/2}t\rfloor 2^{\frac{n-m}{2}}}^{\lfloor 2^{n/2}t \rfloor -1} \big|\langle \delta_{(j+1)2^{-n/2}}; \delta_{(j'+1)2^{-n/2}}\rangle \big|^{2k-1} \notag \\
&=& C_f 2^{-n/2}\sum_{j,j'= \lfloor 2^{m/2}t\rfloor 2^{\frac{n-m}{2}}}^{\lfloor 2^{n/2}t \rfloor -1}
\big|\frac12( |j-j'+1|^{2H} + |j-j'-1|^{2H} - 2 |j-j'|^{2H}) \big|^{2k-1}\notag 
\end{eqnarray}
\begin{eqnarray}
&=&  C_f 2^{-n/2}\sum_{j = \lfloor 2^{m/2}t\rfloor 2^{\frac{n-m}{2}}}^{\lfloor 2^{n/2}t \rfloor -1}\sum_{p =j - \lfloor 2^{n/2}t \rfloor + 1}^{j - \lfloor 2^{m/2}t\rfloor 2^{\frac{n-m}{2}}} \big|\frac12( |p+1|^{2H} + |p-1|^{2H} - 2 |p|^{2H}) \big|^{2k-1}\notag \\ \label{idiot1}
\end{eqnarray}
where we have the first inequality since $f$ belongs to $C_b^{\infty}$ and the last one follows by the change of variable $p=j-j'$. Using the notation (\ref{rho}), and by a Fubini argument, we get that the  quantity given in (\ref{idiot1}) is equal to 
\begin{eqnarray}
&& C_f 2^{-n/2} \sum_{p = \lfloor 2^{m/2}t\rfloor 2^{\frac{n-m}{2}} - \lfloor 2^{n/2}t \rfloor +1}^{\lfloor 2^{n/2}t \rfloor - \lfloor 2^{m/2}t\rfloor 2^{\frac{n-m}{2}} -1} |\rho(p)|^{2k-1} \bigg( (p+ \lfloor 2^{n/2}t \rfloor)\wedge 
\lfloor 2^{n/2}t \rfloor - ( p+ \notag \\
&& \hskip6cm \lfloor 2^{m/2}t\rfloor 2^{\frac{n-m}{2}})\vee \lfloor 2^{m/2}t\rfloor 2^{\frac{n-m}{2}} \bigg).\label{idiot2}
\end{eqnarray}
By separating the cases when $ 0 \leq p \leq \lfloor 2^{n/2}t \rfloor - \lfloor 2^{m/2}t\rfloor 2^{\frac{n-m}{2}} -1$ or when $\lfloor 2^{m/2}t\rfloor 2^{\frac{n-m}{2}} - \lfloor 2^{n/2}t \rfloor +1 \leq p < 0$ we deduce that
 \begin{eqnarray*}
 && 0 \leq \bigg(\frac{(p+ \lfloor 2^{n/2}t \rfloor)}{2^{n/2}}\wedge \frac{(\lfloor 2^{n/2}t \rfloor)}{2^{n/2}} - \frac{( p+ \lfloor 2^{m/2}t\rfloor 2^{\frac{n-m}{2}})}{2^{n/2}}\vee \lfloor 2^{m/2}t\rfloor 2^{-m/2}\bigg) \\
 && \leq \lfloor 2^{n/2}t \rfloor 2^{-n/2} - \lfloor 2^{m/2}t \rfloor 2^{-m/2} = \big|\lfloor 2^{n/2}t \rfloor 2^{-n/2} - \lfloor 2^{m/2}t \rfloor 2^{-m/2} \big|\\
 && \leq \big|\lfloor 2^{n/2}t \rfloor 2^{-n/2} -t \big|+ \big|t- \lfloor 2^{m/2}t \rfloor 2^{-m/2} \big| \leq 2^{-n/2} + 2^{-m/2}.
 \end{eqnarray*}
As a result, the quantity given in (\ref{idiot2}) is bounded by 
\[
 C_f \sum_{p \in \Z } |\rho(p)|^{2k-1} ( 2^{-n/2} + 2^{-m/2}),
\]
with $\sum_{p \in \Z} |\rho(p)|^{2k-1} < \infty$ (because  $H< 1/2 \leq 1 - \frac{1}{4k-2}$). Finally, we have
 \begin{equation}
 Q_{n,m}^{+,2k-1}(t) \leq C(2^{-n/2} + 2^{-m/2}). \label{grand2}
 \end{equation}

 \item \underline{Preparation to the cases  $ 0\leq l \leq 2k-2$} In order to handle the terms  $Q_{n,m}^{+,l}(t)$ whenever $ 0\leq l \leq 2k-2$, we will make use of the following decomposition:
\begin{equation}
|d_n^{(+ ,l)}(j,j')| \leq \frac14 \big(\Omega^{(1,l)}_n(j,j') + \Omega^{(2,l)}_n(j,j') + \Omega^{(3,l)}_n(j,j') + \Omega^{(4,l)}_n(j,j')\big), \label{decomposition}
\end{equation}
where
 \begin{eqnarray*}
 && \Omega^{(1,l)}_n(j,j') = \sum_{a=0}^{2(2k-1-l)}\binom{2(2k-1-l)}{a} \big| E[f^{(a)}(X^+_{j2^{-n/2}})f^{(2(2k-1-l)-a)}(X^+_{j'2^{-n/2}})]\big|\\
 && \hskip4cm \times \big| \big \langle \xi_{j2^{-n/2}}^{\otimes a}\tilde{\otimes}\xi_{j'2^{-n/2}}^{\otimes (2(2k-1-l)-a)}; \delta_{(j+1)2^{-n/2}}^{\otimes (2k-1-l)}\otimes\delta_{(j'+1)2^{-n/2}}^{\otimes (2k-1-l)}\big \rangle \big|\\
 && \Omega^{(2,l)}_n(j,j') = \sum_{a=0}^{2(2k-1-l)}\binom{2(2k-1-l)}{a} \big| E[f^{(a)}(X^+_{j2^{-n/2}})f^{(2(2k-1-l)-a)}(X^+_{(j'+1)2^{-n/2}})]\big|\\
 && \hskip4cm \times \big| \big \langle \xi_{j2^{-n/2}}^{\otimes a}\tilde{\otimes}\xi_{(j'+1)2^{-n/2}}^{\otimes (2(2k-1-l)-a)}; \delta_{(j+1)2^{-n/2}}^{\otimes (2k-1-l)}\otimes\delta_{(j'+1)2^{-n/2}}^{\otimes (2k-1-l)}\big \rangle \big|\\
 && \Omega^{(3,l)}_n(j,j') = \sum_{a=0}^{2(2k-1-l)}\binom{2(2k-1-l)}{a} \big| E[f^{(a)}(X^+_{(j+1)2^{-n/2}})f^{(2(2k-1-l)-a)}(X^+_{j'2^{-n/2}})]\big|\\
 && \hskip4cm \times \big| \big \langle \xi_{(j+1)2^{-n/2}}^{\otimes a}\tilde{\otimes}\xi_{j'2^{-n/2}}^{\otimes (2(2k-1-l)-a)}; \delta_{(j+1)2^{-n/2}}^{\otimes (2k-1-l)}\otimes\delta_{(j'+1)2^{-n/2}}^{\otimes (2k-1-l)}\big \rangle \big|\\
 && \Omega^{(4,l)}_n(j,j') = \sum_{a=0}^{2(2k-1-l)}\binom{2(2r-1-l)}{a} \big| E[f^{(a)}(X^+_{(j+1)2^{-n/2}})f^{(2(2k-1-l)-a)}(X^+_{(j'+1)2^{-n/2}})]\big|\\
 && \hskip4cm \times \big| \big \langle \xi_{(j+1)2^{-n/2}}^{\otimes a}\tilde{\otimes}\xi_{(j'+1)2^{-n/2}}^{\otimes (2(2k-1-l)-a)}; \delta_{(j+1)2^{-n/2}}^{\otimes (2k-1-l)}\otimes \delta_{(j'+1)2^{-n/2}}^{\otimes (2k-1-l)}\big \rangle \big|.
 \end{eqnarray*}
 
 \item \underline{For $ 1\leq l \leq 2k-2$ :} Since $f$ belongs to $C_b^{\infty}$ and thanks to (\ref{0}),  we deduce that
 \[
 d_n^{(+,l)}(j,j') \leq C (2^{-nH})^{2(2k-1-l)}.
 \]
  As a consequence of  this previous inequality we have 
 \begin{eqnarray}
 && Q_{n,m}^{+,l}(t) \notag\\
 &&\leq C (2^{-nH})^{2(2k-2)}\:2^{-n/2}\: 2^{nH(2k-1)}\sum_{j,j'= \lfloor 2^{m/2}t\rfloor 2^{\frac{n-m}{2}}}^{\lfloor 2^{n/2}t \rfloor -1} \big|\langle \delta_{(j+1)2^{-n/2}}; \delta_{(j'+1)2^{-n/2}}\rangle \big|^l \notag\\
 && \leq C (2^{-nH})^{2(2k-2)}\: 2^{nH(2k-1)} 2^{-nHl} \bigg(\sum_{p \in \Z}|\rho(p)|^l\bigg) ( 2^{-n/2} + 2^{-m/2})\notag\\
 && \leq C\: 2^{-nH(2k-2)}(2^{-n/2} + 2^{-m/2}), \label{grand3}
 \end{eqnarray}
 where we have the second inequality by the same arguments that have been used previously in the case $l =2k-1$.

 \item \underline{For $l=0$ :} Thanks to the decomposition (\ref{decomposition}) we get
 \begin{equation}
   Q_{n,m}^{+,0}(t) \leq \frac14 2^{-n/2}2^{nH(2k-1)}\sum_{k'=1}^4 \sum_{j,j'= \lfloor 2^{m/2}t\rfloor 2^{\frac{n-m}{2}}}^{\lfloor 2^{n/2}t \rfloor -1}\Omega^{(k',0)}_n(j,j') \label{Omega1}
  \end{equation}
 
 We will study only the term corresponding to $\Omega^{(2,0)}_n(j,j')$  in (\ref{Omega1}), which is representative to the difficulty. It is given by
 
 \begin{eqnarray*}
  && \frac14 2^{-n/2}2^{nH(2k-1)}\sum_{j,j'= \lfloor 2^{m/2}t\rfloor 2^{\frac{n-m}{2}}}^{\lfloor 2^{n/2}t \rfloor -1} \sum_{a=0}^{2(2k-1)}\binom{2(2k-1)}{a} \big| E[f^{(a)}(X^+_{j2^{-n/2}})\\
  &&  \times f^{(2(2k-1)-a)}(X^+_{(j'+1)2^{-n/2}})]\big| \big| \big \langle \xi_{j2^{-n/2}}^{\otimes a}\tilde{\otimes}\xi_{(j'+1)2^{-n/2}}^{\otimes (2(2k-1)-a)}; \delta_{(j+1)2^{-n/2}}^{\otimes (2k-1)}\otimes\delta_{(j'+1)2^{-n/2}}^{\otimes (2k-1)}\big \rangle \big|\notag\\
 &\leq &  C 2^{-n/2}2^{nH(2k-1)}\sum_{j,j'= \lfloor 2^{m/2}t\rfloor 2^{\frac{n-m}{2}}}^{\lfloor 2^{n/2}t \rfloor -1} \sum_{a=0}^{2(2k-1)}\big| \big \langle \xi_{j2^{-n/2}}^{\otimes a}\tilde{\otimes}\xi_{(j'+1)2^{-n/2}}^{\otimes (2(2k-1)-a)};  \\
 && \hskip9cm \delta_{(j+1)2^{-n/2}}^{\otimes (2k-1)}\otimes\delta_{(j'+1)2^{-n/2}}^{\otimes (2k-1)}\big \rangle \big|. \notag
 \end{eqnarray*}
 We define $E_n^{(a,k)}(j,j'):= \big| \big \langle \xi_{j2^{-n/2}}^{\otimes a}\tilde{\otimes}\xi_{(j'+1)2^{-n/2}}^{\otimes (2(2k-1)-a)}; \delta_{(j+1)2^{-n/2}}^{\otimes (2k-1)}\otimes\delta_{(j'+1)2^{-n/2}}^{\otimes (2k-1)}\big \rangle \big|.$
 By (\ref{0}), we thus get, with $\tilde{c}_a$  some combinatorial constants,
  \begin{eqnarray*}
  E_n^{(a,k)}(j,j')  &\leq& \tilde{c}_a\,2^{-nH(4k-3)}
  \big(
   |\langle \xi_{j2^{-n/2}}; \delta_{(j+1)2^{-n/2}}\rangle|
  + | \langle \xi_{j2^{-n/2}}; \delta_{(j'+1)2^{-n/2}}\rangle|\\
&&  \hskip1cm+  |\langle \xi_{(j'+1)2^{-n/2}}; \delta_{(j+1)2^{-n/2}}\rangle|
  + | \langle \xi_{(j'+1)2^{-n/2}}; \delta_{(j'+1)2^{-n/2}}\rangle|
  \big).
  \end{eqnarray*}
For instance,
  we can write
  \begin{eqnarray*}
 && \sum_{j,j'=\lfloor 2^{m/2}t\rfloor 2^{\frac{n-m}{2}}}^{\lfloor 2^{n/2}t\rfloor -1}
 | \langle \xi_{(j'+1)2^{-n/2}}; \delta_{(j+1)2^{-n/2}}\rangle|\\
&  =&2^{-nH-1} \sum_{j,j'=\lfloor 2^{m/2}t\rfloor 2^{\frac{n-m}{2}}}^{\lfloor 2^{n/2}t\rfloor -1}
 \big|
 (j+1)^{2H}-j^{2H}+|j'-j+1|^{2H}-|j'-j|^{2H}
 \big|\\
 &\leq& 2^{-nH-1} \sum_{j,j'=\lfloor 2^{m/2}t\rfloor 2^{\frac{n-m}{2}}}^{\lfloor 2^{n/2}t\rfloor -1}
 \big((j+1)^{2H}-j^{2H}\big)\\
 &&+
 2^{-nH-1} \sum_{\lfloor 2^{m/2}t\rfloor 2^{\frac{n-m}{2}}\leq j\leq j'\leq \lfloor 2^{n/2}t\rfloor-1}
 \big((j'-j+1)^{2H}-(j'-j)^{2H}\big)\\
  &&+
 2^{-nH-1} \sum_{\lfloor 2^{m/2}t\rfloor 2^{\frac{n-m}{2}}\leq j'<j\leq \lfloor 2^{n/2}t\rfloor-1}
 \big((j-j')^{2H}-(j-j'-1)^{2H}\big)\\
 &\leq&\frac32\,2^{-nH}\big(\lfloor 2^{n/2}t\rfloor - \lfloor 2^{m/2}t\rfloor 2^{\frac{n-m}{2}}\big) \lfloor 2^{n/2}t\rfloor ^{2H} \leq \frac{3t^{2H}}2\big( 2^{n/2}t - \lfloor 2^{m/2}t\rfloor 2^{\frac{n-m}{2}}\big).
  \end{eqnarray*}
  Similarly,
  \begin{eqnarray*}
   \sum_{j,j'=\lfloor 2^{m/2}t\rfloor 2^{\frac{n-m}{2}}}^{\lfloor 2^{n/2}t\rfloor -1}
  |\langle \xi_{j2^{-n/2}}; \delta_{(j+1)2^{-n/2}}\rangle|&\leq&\frac{3t^{2H}}2\big( 2^{n/2}t - \lfloor 2^{m/2}t\rfloor 2^{\frac{n-m}{2}}\big);\\
   \sum_{j,j'=\lfloor 2^{m/2}t\rfloor 2^{\frac{n-m}{2}}}^{\lfloor 2^{n/2}t\rfloor -1}
  |\langle \xi_{j2^{-n/2}}; \delta_{(j'+1)2^{-n/2}}\rangle|&\leq&\frac{3t^{2H}}2\big( 2^{n/2}t - \lfloor 2^{m/2}t\rfloor 2^{\frac{n-m}{2}}\big);\\
     \sum_{j,j'=\lfloor 2^{m/2}t\rfloor 2^{\frac{n-m}{2}}}^{\lfloor 2^{n/2}t\rfloor -1}
  |\langle \xi_{(j'+1)2^{-n/2}}; \delta_{(j'+1)2^{-n/2}}\rangle|&\leq&\frac{3t^{2H}}2\big( 2^{n/2}t - \lfloor 2^{m/2}t\rfloor 2^{\frac{n-m}{2}}\big).
  \end{eqnarray*}
  As a consequence, we deduce
  \begin{equation}
  Q_{n,m}^{(+,0)}(t)\leq  C\,2^{-nH(2k-2)} (t - \lfloor 2^{m/2}t\rfloor 2^{\frac{-m}{2}}) \leq C\,2^{-nH(2k-2)}2^{-m/2}. \label{grand4}
  \end{equation}
  \end{itemize}
  Combining (\ref{grand2}), (\ref{grand3}) and (\ref{grand4}) finally shows 
  \begin{eqnarray*}
  && E\big[ \big(J^{+,k}_{n,m}(t)\big)^2 \big] \leq C \bigg( 2^{-n/2} + 2^{-m/2}+ 2^{-nH(2k-2)}(2^{-n/2} + 2^{-m/2}) \\
  && \hskip4cm + 2^{-nH(2k-2)} 2^{-m/2} \big).
  \end{eqnarray*}
  So, we deduce that $J^{+,k}_{n,m}(t)$ converges to $0$ in $L^2(\Omega)$ as $n\to\infty$ and then $m\to\infty$.
 Finally, thanks to (a), (b) and (c), (\ref{principal conv2}) holds true. 
 \end{proof}

 \subsection{\underline{Step 2}: Limit of $2^{-n/4}W_{n}^{(1)}(f,Y_{T_{\lfloor 2^n t\rfloor,n}})$ }
 Thanks to  (\ref{H >1/6 , r=1}), for $H> \frac16$, $2^{-\frac{nH}{2}}W_{n}^{(1)}(f,Y_{T_{\lfloor 2^n t\rfloor,n}}) \underset{n\to \infty}{\overset{P}{\longrightarrow}}\int_0^{Y_t} f(X_s)d^{\circ}X_s$. Thus, since $H< \frac12$, we deduce that 
 \begin{equation}
 2^{-n/4}W_{n}^{(1)}(f,Y_{T_{\lfloor 2^n t\rfloor,n}}) \underset{n\to \infty}{\overset{P}{\longrightarrow}}0. \label{H>1/6 , r=1, fin}
\end{equation}  

\subsection{\underline{Step 3}: Moment bounds for $W_n^{(2r-1)}(f,\cdot)$}
We recall the following result from \cite{RZ2}. Fix an integer $r\geq 1$ as well as a function $f\in C^\infty_b$. There exists a constant $c >0$   such that, for all real numbers $s<t$ and all $n\in\N$,
\begin{eqnarray*}
E\big[ \big(W_{n}^{(2r-1)}(f,t) -W_{n}^{(2r-1)}(f,s) \big)^2\big]
&\leq& c\,\max(|s|^{2H},|t|^{2H})\big(|t-s|2^{n/2}+1\big).
\end{eqnarray*}

\subsection{\underline{Step 4}: Last step in the proof of (\ref{fdd thm1})}
Following \cite{kh-lewis1}, we introduce the following natural definition for two-sided stochastic integrals: for $u\in \R$, let
\begin{eqnarray}\label{integrale}
\int_0^uf(X_s)dW_s =\left\{
\begin{array}{lcl}
\int_0^uf(X^+_s)dW^+_s  &&\mbox{if $ u \geq 0$}\\
 \int_0^{-u}f(X^-_s)dW^-_s  &&\mbox{if $u < 0$}
  \end{array}
\right.,
\end{eqnarray}
where $W^+$ and $W^-$ are defined in Proposition \ref{principal proposition}, $X^+$ and $X^-$ are defined in Section 4, and $\int_0^\cdot f(X^{\pm}_s)dW^{\pm}_s$ must be  understood in the Wiener-It\^o sense.

 Using (\ref{transforming}), (\ref{H>1/6 , r=1, fin}), the conclusion of Step 3 (to pass from $Y_{T_{\lfloor 2^n t\rfloor,n}}$ to $Y_t$)  and since by \cite[Lemma 2.3]{kh-lewis1}, we have $Y_{T_{\lfloor 2^n t\rfloor,n}} \overset{L^2}{\longrightarrow} Y_t$ as $n \to \infty$, we deduce that the limit
of
$2^{-n/4}V_n^{(2r-1)}(f,t) $
is the same as that of
\[
2^{-n/4}\sum_{l=2}^{r}\kappa_{r,l}W_{n}^{(2l-1)}(f,Y_t).
\]

Thus, the proof of (\ref{fdd thm1}) follow directly from  (\ref{principal conv2}), the definition of the integral in (\ref{integrale}),  as well as the fact that $X$, $W$ and $Y$ are independent.

\section{Proof of (\ref{L2 thm1})}

We suppose that $H > \frac12$.  The proof of (\ref{L2 thm1}) will be done in several steps:

\subsection{\underline{Step 1}: Limits and moment bounds for $W_n^{(2i-1)}(f,\cdot)$}
We recall the following It\^o-type formula from \cite[Theorem 4]{NNT} (see also \cite[Theorem 1.3]{RZ3} for an extension  of this formula to the bi-dimensional case). For  all $t\in \R$, the following change-of-variable formula holds true for $H > \frac12$
\begin{equation}
F(X_t)-F(0) = \int_0^t f(X_s)d^{\circ}X_s, \label{limit 1'}
\end{equation}
where $F$ is a primitive of $f$ and $\int_0^t f(X_s)d^{\circ}X_s$ is the Stratonovich integral of $f(X)$ with respect to $X$ defined as the limit in probability of $2^{-\frac{nH}{2}}W_{n}^{(1)}(f,t)$ as $n \to \infty$.

For the rest of the proof, we suppose that  $f\in C_b^\infty$. The following proposition will play a pivotal role in the proof of (\ref{L2 thm1}).
\begin{prop}
 There exists a positive constant $C$, independent of $n$ and $t$, such that
for all $i\geq 1$ and $t\in \R$, we have
\begin{equation}
 E\big[\big(2^{-\frac{nH}{2}}W_{n}^{(2i-1)}(f,t)\big)^2\big] 
\leq  C\: \psi(t,H,i,n),\label{limit 2}
\end{equation}
where, we have
\begin{eqnarray*}
&& \psi(t,H,i,n):= |t|^{(2H-1)(4i-3)}\:|t|^{2H + 1}\:2^{-n(2i-2)(1-H)} \notag\\
&& + C\sum_{a=1}^{2i-2}\bigg(\big[ |t|(1+n) +t^2\big] \:|t|^{2(2H-1)(2i-1-a)}\:2^{-\frac{n}{2}(2H-1)}\:2^{-n(1-H)[2i-1-a]} \notag\\
 &&+  |t|^{2(1-(1-H)a)}\:|t|^{2(2H-1)(2i-1-a)}\:2^{-n(1-H)[2i-2]}{\bf 1}_{\{H > 1 - \frac{1}{2a}\}}\bigg) + C\big[ |t|(1+n) +t^2\big] 2^{-n(H- \frac12)} \notag\\
&& + C\:|t|^{2(1-(1-H)(2i-1))}\: 2^{-n(1-H)(2i-2)}{\bf 1}_{\{H > 1 - \frac{1}{(4i-2)}\}}.
\end{eqnarray*}
\end{prop}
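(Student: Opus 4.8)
The plan is to expand $2^{-\frac{nH}{2}}W_n^{(2i-1)}(f,t)$ into multiple Wiener--It\^o integrals, take its second moment, and control each resulting chaos separately. I would only treat $t\ge 0$, writing $X$ for $X^+$; the case $t<0$ is \emph{mutatis mutandis} the same with $X^-$. Using the identity $H_{2i-1}\big(2^{\frac{nH}{2}}(X_{(j+1)2^{-n/2}}-X_{j2^{-n/2}})\big)=2^{\frac{nH(2i-1)}{2}}I_{2i-1}\big(\delta_{(j+1)2^{-n/2}}^{\otimes(2i-1)}\big)$ recalled in the excerpt, together with $2^{-\frac{nH}{2}}2^{\frac{nH(2i-1)}{2}}=2^{nH(i-1)}$, this gives
\[
2^{-\frac{nH}{2}}W_n^{(2i-1)}(f,t)=2^{nH(i-1)}\sum_{j=0}^{\lfloor 2^{n/2}t\rfloor-1}\Delta_{j,n}f(X)\,I_{2i-1}\big(\delta_{(j+1)2^{-n/2}}^{\otimes(2i-1)}\big),
\]
and, squaring and expanding $I_{2i-1}(\cdot)I_{2i-1}(\cdot)$ by the product formula (\ref{product}) (the contractions reducing to $\delta_{(j+1)2^{-n/2}}^{\otimes(2i-1)}\otimes_l\delta_{(j'+1)2^{-n/2}}^{\otimes(2i-1)}=\langle\delta_{(j+1)2^{-n/2}};\delta_{(j'+1)2^{-n/2}}\rangle^l\,\delta_{(j+1)2^{-n/2}}^{\otimes(2i-1-l)}\otimes\delta_{(j'+1)2^{-n/2}}^{\otimes(2i-1-l)}$),
\[
E\big[\big(2^{-\frac{nH}{2}}W_n^{(2i-1)}(f,t)\big)^2\big]=2^{2nH(i-1)}\sum_{l=0}^{2i-1}l!\binom{2i-1}{l}^{2}\sum_{j,j'=0}^{\lfloor 2^{n/2}t\rfloor-1}\langle\delta_{(j+1)2^{-n/2}};\delta_{(j'+1)2^{-n/2}}\rangle^{l}\,d_n^{(l)}(j,j'),
\]
with $d_n^{(l)}(j,j'):=E\big[\Delta_{j,n}f(X)\,\Delta_{j',n}f(X)\,I_{2(2i-1-l)}\big(\delta_{(j+1)2^{-n/2}}^{\otimes(2i-1-l)}\otimes\delta_{(j'+1)2^{-n/2}}^{\otimes(2i-1-l)}\big)\big]$. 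I would then treat the three regimes $l=2i-1$, $1\le l\le 2i-2$ and $l=0$ separately; I expect them to produce, respectively, the last two lines of $\psi$, the sum over $a$ in $\psi$ (with $a=l$), and the first term of $\psi$.

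For $l=2i-1$ the inner multiple integral is the constant $1$, so $d_n^{(2i-1)}(j,j')=E[\Delta_{j,n}f(X)\Delta_{j',n}f(X)]$, of modulus $\le\|f\|_\infty^{2}$ since $f\in C_b^\infty$; the term is then $\le C\,2^{2nH(i-1)}\sum_{j,j'}|\langle\delta_{(j+1)2^{-n/2}};\delta_{(j'+1)2^{-n/2}}\rangle|^{2i-1}$, and I would bound the double sum by part 3 of Lemma \ref{tech-lemma} for the exponent $2i-1$, i.e.\ (\ref{2}) if $H<1-\tfrac{1}{4i-2}$, (\ref{3}) if $H=1-\tfrac{1}{4i-2}$, (\ref{4}) if $H>1-\tfrac{1}{4i-2}$. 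Carrying the prefactor $2^{2nH(i-1)}$ through, the $2^{n(\frac12-(2i-1)H)}$-type contributions collapse to $2^{-n(H-\frac12)}$, which (bounding the $t$-factor by $t(1+n)+t^2$ uniformly over the three cases) yields the term $C\,[\,|t|(1+n)+t^2\,]\,2^{-n(H-\frac12)}$ of $\psi$, while the extra summand of (\ref{4}) becomes $C\,|t|^{2(1-(1-H)(2i-1))}\,2^{-n(1-H)(2i-2)}$, which is exactly the last term of $\psi$, carrying the indicator $\mathbf{1}_{\{H>1-\frac{1}{4i-2}\}}$.

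For $1\le l\le 2i-2$ and for $l=0$ I would compute $d_n^{(l)}(j,j')$ as in the proof of (\ref{fdd thm1}), Step 1, item (c): by the duality formula (\ref{adjoint}), $d_n^{(l)}(j,j')=E\big[\big\langle D^{2(2i-1-l)}\big(\Delta_{j,n}f(X)\Delta_{j',n}f(X)\big);\delta_{(j+1)2^{-n/2}}^{\otimes(2i-1-l)}\otimes\delta_{(j'+1)2^{-n/2}}^{\otimes(2i-1-l)}\big\rangle\big]$, and the Leibniz rule (\ref{Leibnitz1}) expresses $D^{2(2i-1-l)}(\Delta_{j,n}f(X)\Delta_{j',n}f(X))$ as a finite sum, with bounded coefficients $f^{(a)}(X_\bullet)f^{(b)}(X_\bullet)$, of symmetrized tensor powers of $\varepsilon_{j2^{-n/2}},\varepsilon_{(j+1)2^{-n/2}},\varepsilon_{j'2^{-n/2}},\varepsilon_{(j'+1)2^{-n/2}}$ of total degree $2(2i-1-l)$. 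Expanding the inner products combinatorially turns $|d_n^{(l)}(j,j')|$ into a bounded combination of products of $2(2i-1-l)$ elementary scalar factors, each of the form $|\langle\varepsilon_{j2^{-n/2}};\delta_{(j'+1)2^{-n/2}}\rangle|$, $|\langle\varepsilon_{(j+1)2^{-n/2}};\delta_{(j'+1)2^{-n/2}}\rangle|$ or an analogue with $j$ and $j'$ exchanged; by (\ref{1})--(\ref{1'}) with $q=1$ — the only place where $H>\frac12$ intervenes — each such factor is $\le C\,2^{-n/2}|t|^{2H-1}$, so that $|d_n^{(l)}(j,j')|\le C\,2^{-n(2i-1-l)}|t|^{2(2H-1)(2i-1-l)}$. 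Inserting this into the double sum, summing $|\langle\delta_{(j+1)2^{-n/2}};\delta_{(j'+1)2^{-n/2}}\rangle|^{l}$ over $j,j'$ by part 3 of Lemma \ref{tech-lemma} for the exponent $l$ (when $l\ge 1$) or by the crude count $(\lfloor 2^{n/2}t\rfloor)^2\le 2^{n}t^{2}$ (when $l=0$), and multiplying once more by $2^{2nH(i-1)}$, I expect to recover precisely the $a$-th summand of $\psi$ with $a=l$ for $1\le l\le 2i-2$ — the indicator $\mathbf{1}_{\{H>1-\frac{1}{2a}\}}$ appearing exactly when $\sum_{p\in\Z}|\rho(p)|^{a}=\infty$ forces the use of (\ref{4}) instead of (\ref{2})--(\ref{3}) — and the first term of $\psi$ for $l=0$.

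The hard part is not any individual estimate — each appeal to Lemma \ref{tech-lemma} is a black box, and the combinatorics of (\ref{Leibnitz1}) is routine — but the bookkeeping needed to match, for every value of $l$ and of the Leibniz index, each power of $2^{n}$ and of $|t|$ against the intended term of $\psi$, while correctly tracking which of the three regimes of part 3 of Lemma \ref{tech-lemma} is active — hence which of the indicators $\mathbf{1}_{\{H>1-\frac{1}{2a}\}}$, $\mathbf{1}_{\{H>1-\frac{1}{4i-2}\}}$ actually shows up — and absorbing all combinatorial constants into $C$. One must also be careful that the uniform bound $C\,2^{-n/2}|t|^{2H-1}$ on the elementary factors is not too lossy in the borderline regimes where $2i-1-l$ is small, which is precisely where the sharper estimate (\ref{4}) and the extra non-negligible term in $\psi$ become necessary.
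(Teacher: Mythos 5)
Your proposal is correct and follows essentially the same route as the paper's proof: chaos expansion via (\ref{linear-isometry}), the product formula (\ref{product}), the duality relation (\ref{adjoint}), the Leibniz rule (\ref{Leibnitz1}), and the case split $l=2i-1$, $1\leq l\leq 2i-2$, $l=0$, with parts 2 and 3 of Lemma \ref{tech-lemma} doing the quantitative work exactly as you indicate. The only (harmless) deviation is in the $l=0$ term, where you bound all $4i-2$ elementary factors by $C\,2^{-n/2}|t|^{2H-1}$ and multiply by the crude count $2^{n}t^{2}$ of summands, whereas the paper keeps one factor $|\langle\varepsilon_{\cdot};\delta_{\cdot}\rangle|$ intact and sums it over $j,j'$ via (\ref{5})--(\ref{6}); since $2^{n}t^{2}\cdot 2^{-n/2}|t|^{2H-1}=2^{n/2}|t|^{2H+1}$, both give exactly the first term $|t|^{(2H-1)(4i-3)}|t|^{2H+1}2^{-n(2i-2)(1-H)}$ of $\psi$.
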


\begin{proof}{}
 Set $
\phi_n(j,j'):= \Delta_{j,n} f(X)\Delta_{j',n} f(X),$ where we recall that
 $\Delta_{j,n} f(X):=  \frac12(f(X_{j2^{-n/2}})+ f(X_{(j+1)2^{-n/2}})$. Fix $t \geq 0$ (the proof in the case $t < 0$ is similar), for all $i\geq 1$, we have
\begin{eqnarray*}
 &&E\big[ \big(2^{-\frac{nH}{2}}W_{n}^{(2i-1)}(f,t) \big)^2 \big]= E\big[ \big(2^{-\frac{nH}{2}} W_{+,n}^{(2i-1)}(f,t)\big)^2\big] \notag\\
 &=& 2^{-nH}\sum_{j,j'=0}^{\lfloor 2^{\frac{n}{2}} t \rfloor -1}E\bigg(\phi_n(j,j')H_{2i-1}\big(X^{n,+}_{j+1} - X^{n,+}_{j}\big) H_{2i-1}\big(X^{n,+}_{j'+1} - X^{n,+}_{j'}\big)\bigg)\notag\\
 &=& 2^{-nH(1 - (2i-1))}\sum_{j,j'=0}^{\lfloor 2^{\frac{n}{2}} t \rfloor -1}E\bigg(\phi_n(j,j')I_{2i-1}\big(\delta_{(j+1)2^{-n/2}}^{\otimes 2i-1}\big) I_{2i-1}\big(\delta_{(j'+1)2^{-n/2}}^{\otimes 2i-1}\big)\bigg)\notag\\
 \end{eqnarray*}
 \begin{eqnarray}
 &=& 2^{-nH(2-2i)}\sum_{a=0}^{2i-1} a! \binom{2i-1}{a}^2\sum_{j,j'=0}^{\lfloor 2^{\frac{n}{2}} t \rfloor -1}E\bigg(\phi_n(j,j')\notag\\
 && \hspace{2cm} \times I_{4i-2-2a}\big(\delta_{(j+1)2^{-n/2}}^{\otimes 2i-1-a}\otimes\delta_{(j'+1)2^{-n/2}}^{\otimes 2i-1-a}\big)\bigg) \langle \delta_{(j+1)2^{-n/2}}, \delta_{(j'+1)2^{-n/2}} \rangle^a\notag\\
 &=& 2^{-nH(2 - 2i)}\sum_{a=0}^{2i-1} a! \binom{2i-1}{a}^2\sum_{j,j'=0}^{\lfloor 2^{\frac{n}{2}} t \rfloor -1}E\bigg(\bigg \langle D^{4i-2-2a}\big(\phi_n(j,j')\big) ,\notag\\
 && \hspace{2cm} \delta_{(j+1)2^{-n/2}}^{\otimes 2i-1-a}\otimes\delta_{(j'+1)2^{-n/2}}^{\otimes 2i-1-a}\bigg\rangle \bigg) \langle \delta_{(j+1)2^{-n/2}}, \delta_{(j'+1)2^{-n/2}} \rangle^a\notag\\
 &=& \sum_{a=0}^{2i-1}a! \binom{2i-1}{a}^2 Q_n^{(i,a)}(t),\label{moment bounds}
 \end{eqnarray}
 with obvious notation at the last equality and  with the third equality following from (\ref{linear-isometry}), the fourth one from (\ref{product}) and the fifth one from (\ref{adjoint}). We have the following estimates.
  
 \begin{itemize}
 
 \item \underline{Case $a= 2i-1$} 
 \begin{eqnarray*}
 |Q_n^{(i,2i-1)}(t)| &\leq & 2^{-nH(2 - 2i)}\sum_{j,j'=0}^{\lfloor 2^{\frac{n}{2}} t \rfloor -1}E\big( \big|\phi_n(j,j')\big| \big)\\
 && \hspace{4cm} \times \big|\langle \delta_{(j+1)2^{-n/2}}, \delta_{(j'+1)2^{-n/2}} \rangle\big|^{2i-1}\\
 & \leq & C 2^{-nH(2-2i)}\sum_{j,j'=0}^{\lfloor 2^{\frac{n}{2}} t \rfloor -1}\big|\langle \delta_{(j+1)2^{-n/2}}, \delta_{(j'+1)2^{-n/2}} \rangle\big|^{2i-1}.
 \end{eqnarray*}
Now, we distinguish three cases: 
\begin{itemize}

\item [(a)] \underline{If $H < 1 - \frac{1}{(4i-2)}:$} by (\ref{2}) we have

\begin{eqnarray*}
 |Q_n^{(i,2i-1)}(t)| & \leq & C\: t\: 2^{-nH(2-2i)}\: 2^{n(\frac12 -(2i-1)H)}= C\: t\: 2^{-n(H- \frac12)}.
 \end{eqnarray*}

\item[(b)] \underline{If $H = 1 - \frac{1}{(4i-2)}:$}  by (\ref{3}) we have

\begin{eqnarray*}
 |Q_n^{(i,2i-1)}(t)| & \leq & C [ t(1+n) +t^2] 2^{-nH(2-2i)}\: 2^{n(\frac12 -(2i-1)H)}\\
 &=& C [ t(1+n) +t^2] 2^{-n(H- \frac12)}.
 \end{eqnarray*}

\item[(c)] \underline{If $H > 1 - \frac{1}{(4i-2)}:$} by (\ref{4}) we have

\begin{eqnarray*}
 |Q_n^{(i,2i-1)}(t)| & \leq & C \:t \:2^{-nH(2-2i)}\: 2^{n(\frac12 -(2i-1)H)}\\
 && + C\:t^{2-(2-2H)(2i-1)}\:2^{-nH(2-2i)}\:2^{n(1-(2i-1))}\\
 &=& C\:t\: 2^{-n(H- \frac12)} + C\:t^{2(1-(1-H)(2i-1))}\: 2^{-n(1-H)(2i-2)}.
 \end{eqnarray*}
So, we deduce that 
\begin{eqnarray}
 |Q_n^{(i,2i-1)}(t)| &\leq& C \big[ |t|(1+n) +t^2\big] 2^{-n(H- \frac12)} \label{majoration 1}\\
&& + C\:|t|^{2(1-(1-H)(2i-1))}\: 2^{-n(1-H)(2i-2)}{\bf 1}_{\{H > 1 - \frac{1}{(4i-2)}\}} \notag
\end{eqnarray}
\end{itemize}

\item \underline{Preparation to the cases where $0\leq a \leq 2i-2$} \\
  
 Thanks to (\ref{Leibnitz1}) we have
 \begin{eqnarray}
 && D^{4i-2-2a}\big(\phi_n(j,j')\big)= D^{4i-2-2a}\big( \Delta_{j,n} f(X)\Delta_{j',n} f(X)\big)\leq C\sum_{l=0}^{4i-2-2a}\notag\\
 && \bigg(f^{(l)}(X_{j2^{-n/2}})\varepsilon_{j2^{-n/2}}^{\otimes l} + f^{(l)}(X_{(j+1)2^{-n/2}})\varepsilon_{(j+1)2^{-n/2} }^{\otimes l}\bigg) \tilde{\otimes}\notag
 \end{eqnarray}
 \begin{eqnarray}
 && \bigg(f^{(4i-2-2a-l)}(X_{j'2^{-n/2}})\varepsilon_{j'2^{-n/2}}^{\otimes 4i-2-2a -l} + f^{(4i-2-2a-l)}(X_{(j'+1)2^{-n/2}})\varepsilon_{(j'+1)2^{-n/2} }^{\otimes 4i-2-2a -l}\bigg)\notag\\
 && = C \sum_{l=0}^{4i-2-2a}\bigg(f^{(l)}(X_{j2^{-n/2}})f^{(4i-2-2a-l)}(X_{j'2^{-n/2}})\varepsilon_{j2^{-n/2}}^{\otimes l}\tilde{\otimes}\varepsilon_{j'2^{-n/2}}^{\otimes 4i-2-2a -l}+ f^{(l)}(X_{j2^{-n/2}})\notag\\
 &&\times f^{(4i-2-2a-l)}(X_{(j'+1)2^{-n/2}}) \varepsilon_{j2^{-n/2}}^{\otimes l}\tilde{\otimes}\varepsilon_{(j'+1)2^{-n/2} }^{\otimes 4i-2-2a -l}+f^{(l)}(X_{(j+1)2^{-n/2}})f^{(4i-2-2a-l)}(X_{j'2^{-n/2}})\notag\\
 && \times \varepsilon_{(j+1)2^{-n/2} }^{\otimes l}\tilde{\otimes}\varepsilon_{j'2^{-n/2}}^{\otimes 4i-2-2a -l} + f^{(l)}(X_{(j+1)2^{-n/2}})f^{(4i-2-2a-l)}(X_{(j'+1)2^{-n/2}})\varepsilon_{(j+1)2^{-n/2} }^{\otimes l}\tilde{\otimes}\notag\\
 &&\varepsilon_{(j'+1)2^{-n/2} }^{\otimes 4i-2-2a -l}\bigg)\label{Leibnitz2}
 \end{eqnarray}
  So, we have
 
 \item \underline{Case $ 1\leq a \leq 2i-2$} 
 
 \begin{eqnarray*}
 &&|Q_n^{(i,a)}(t)|\\
  &\leq & C 2^{-nH(2-2i)}\sum_{l=0}^{4i-2-2a}\sum_{j,j'=0}^{\lfloor 2^{\frac{n}{2}} t \rfloor -1} \\
 && \bigg| \bigg \langle \bigg(\varepsilon_{j2^{-n/2}}^{\otimes l} + \varepsilon_{(j+1)2^{-n/2} }^{\otimes l}\bigg) \tilde{\otimes}\bigg(\varepsilon_{j'2^{-n/2}}^{\otimes 4i-2-2a -l} + \varepsilon_{(j'+1)2^{-n/2} }^{\otimes 4i-2-2a -l}\bigg),\\
 &&  \delta_{(j+1)2^{-n/2}}^{\otimes 2i-1-a}\otimes\delta_{(j'+1)2^{-n/2}}^{\otimes 2i-1-a} \bigg \rangle \bigg|\big|\langle \delta_{(j+1)2^{-n/2}}, \delta_{(j'+1)2^{-n/2}} \rangle\big|^a\\
 &\leq & C \:t^{(2H-1)(4i-2-2a)}\:2^{-nH(2-2i)}(2^{-\frac{n}{2}})^{4i-2-2a}\sum_{j,j'=0}^{\lfloor 2^{\frac{n}{2}} t \rfloor -1}\big|\langle \delta_{(j+1)2^{-n/2}}, \delta_{(j'+1)2^{-n/2}} \rangle\big|^a,\\
 \end{eqnarray*}
 where we have the first inequality because $f\in C^\infty_b$ and thanks to (\ref{Leibnitz2}), and the second one thanks to (\ref{1}) and (\ref{1'}). Now, we distinguish three cases: 
\begin{itemize}

\item [(a)] \underline{If $H < 1 - \frac{1}{2a}:$} by (\ref{2}) we have

\begin{eqnarray*}
 |Q_n^{(i,a)}(t)| & \leq & C\: t\:t^{(2H-1)(4i-2-2a)}\:2^{-nH(2-2i)}2^{-n(2i-1-a)}\: 2^{n(\frac12 -aH)}\\
 &=& C\:t^{2(2H-1)(2i-1-a)+1}\:2^{-\frac{n}{2}(2H-1)}\:2^{-n(1-H)[2i-1-a]}.
 \end{eqnarray*}

\item[(b)] \underline{If $H = 1 - \frac{1}{2a}:$}  by (\ref{3}) we have

\begin{eqnarray*}
 &&|Q_n^{(i,a)}(t)|\\
  & \leq & C [ t(1+n) +t^2] \:t^{(2H-1)(4i-2-2a)}\:2^{-nH(2-2i)}2^{-n(2i-1-a)}\: 2^{n(\frac12 -aH)}\\
 &=& C[ t(1+n) +t^2] \:t^{2(2H-1)(2i-1-a)}\:2^{-\frac{n}{2}(2H-1)}\:2^{-n(1-H)[2i-1-a]}.
 \end{eqnarray*}

\item[(c)] \underline{If $H > 1 - \frac{1}{2a}:$} by (\ref{4}) we have

\begin{eqnarray*}
 |Q_n^{(i,a)}(t)| & \leq & C\: t\:t^{(2H-1)(4i-2-2a)}\:2^{-nH(2-2i)}2^{-n(2i-1-a)}\: 2^{n(\frac12 -aH)}\\
 && + C\: t^{2-(2-2H)a}\:t^{(2H-1)(4i-2-2a)}\:2^{-nH(2-2i)}2^{-n(2i-1-a)}\: 2^{n(1-a)}\\
 &=& C\:t^{2(2H-1)(2i-1-a)+1}\:2^{-\frac{n}{2}(2H-1)}\:2^{-n(1-H)[2i-1-a]}\\
 && + C\: t^{2(1-(1-H)a)}\:t^{2(2H-1)(2i-1-a)}\:2^{-n(1-H)[2i-2]}.
 \end{eqnarray*}
 So, we deduce that
 \begin{eqnarray}
 |Q_n^{(i,a)}(t)| & \leq & C\big[ |t|(1+n) +t^2\big] \:|t|^{2(2H-1)(2i-1-a)}\:2^{-\frac{n}{2}(2H-1)}\:2^{-n(1-H)[2i-1-a]} \notag\\
 &&+ C\: |t|^{2(1-(1-H)a)}\:|t|^{2(2H-1)(2i-1-a)}\:2^{-n(1-H)[2i-2]}{\bf 1}_{\{H > 1 - \frac{1}{2a}\}}\notag\\ \label{majoration 2}
 \end{eqnarray}
\end{itemize}

\item \underline{Case $a=0$}
\begin{eqnarray*}
Q_n^{(i,0)}(t)= 2^{-nH(2 - 2i)}\sum_{j,j'=0}^{\lfloor 2^{\frac{n}{2}} t \rfloor -1}E\bigg(\bigg \langle D^{4i-2}\big(\phi_n(j,j')\big) ,\delta_{(j+1)2^{-n/2}}^{\otimes 2i-1}\otimes\delta_{(j'+1)2^{-n/2}}^{\otimes 2i-1}\bigg\rangle \bigg). 
\end{eqnarray*}
By (\ref{Leibnitz2}) we deduce that
\begin{eqnarray}
 &&|Q_n^{(i,0)}(t)|\leq  C 2^{-nH(2-2i)}\sum_{l=0}^{4i-2}\sum_{j,j'=0}^{\lfloor 2^{\frac{n}{2}} t \rfloor -1} \big| \big \langle \big(\varepsilon_{j2^{-n/2}}^{\otimes l} + \varepsilon_{(j+1)2^{-n/2} }^{\otimes l}\big) \notag \\         
 && \tilde{\otimes}\big(\varepsilon_{j'2^{-n/2}}^{\otimes 4i-2 -l} + \varepsilon_{(j'+1)2^{-n/2} }^{\otimes 4i-2 -l}\big),
 \delta_{(j+1)2^{-n/2}}^{\otimes 2i-1}\otimes\delta_{(j'+1)2^{-n/2}}^{\otimes 2i-1} \big \rangle \big|. \label{Q_n^0}
 \end{eqnarray}
 We define 
 \begin{eqnarray*}
 E_n^{(i,l)}(j,j')&:=& \big| \big \langle \big(\varepsilon_{j2^{-n/2}}^{\otimes l} + \varepsilon_{(j+1)2^{-n/2}}^{\otimes l}\big) \tilde{\otimes}\big(\varepsilon_{j'2^{-n/2}}^{\otimes 4i-2 -l} + \varepsilon_{(j'+1)2^{-n/2}}^{\otimes 4i-2 -l}\big),\\
 && \hspace{4cm} \delta_{(j+1)2^{-n/2}}^{\otimes 2i-1}\otimes\delta_{(j'+1)2^{-n/2}}^{\otimes 2i-1} \big \rangle \big|.
 \end{eqnarray*}
   
 Observe that by (\ref{1}) and (\ref{1'}), we have
 \begin{eqnarray*}
 && E_n^{(i,l)}(j,j') \leq C t^{(2H-1)(4i-3)}(2^{-\frac{n}{2}})^{4i-3}\: \bigg(\big|\big \langle \big(\varepsilon_{j2^{-n/2}} + \varepsilon_{(j+1)2^{-n/2} }\big),\delta_{(j'+1)2^{-n/2}} \big \rangle \big|\\
 & +& \big|\big \langle \big(\varepsilon_{j'2^{-n/2}} + \varepsilon_{(j'+1)2^{-n/2} }\big),\delta_{(j+1)2^{-n/2}} \big \rangle \big| + \big|\big \langle \big(\varepsilon_{j2^{-n/2}} + \varepsilon_{(j+1)2^{-n/2} }\big),\delta_{(j+1)2^{-n/2}} \big \rangle \big|\\
& +& \big|\big \langle \big(\varepsilon_{j'2^{-n/2}} + \varepsilon_{(j'+1)2^{-n/2} }\big),\delta_{(j'+1)2^{-n/2}} \big \rangle \big| \bigg).
 \end{eqnarray*}
By combining these previous estimates with (\ref{Q_n^0}), (\ref{5}) and (\ref{6}) we deduce that
\begin{eqnarray}
 |Q_n^{(i,0)}(t)|&\leq & C\:|t|^{(2H-1)(4i-3)}\:|t|^{2H + 1}\:2^{-nH(2-2i)}\:(2^{-\frac{n}{2}})^{4i-3}\:2^{\frac{n}{2} }\notag\\
 &=& C\:|t|^{(2H-1)(4i-3)}\:|t|^{2H + 1}\:2^{-n(2i-2)(1-H)}. \label{majoration 3}
  \end{eqnarray}
  By combining (\ref{moment bounds}) with (\ref{majoration 1}), (\ref{majoration 2}) and (\ref{majoration 3}), we deduce that (\ref{limit 2}) holds true.
 \end{itemize}
\end{proof}

\subsection{\underline{Step 2}: Limit of $2^{-\frac{nH}{2}}W_{n}^{(2i-1)}(f,Y_{T_{\lfloor 2^n t\rfloor,n}})$}
Let us prove that for $i\geq 2$, 
\begin{equation}
2^{-\frac{nH}{2}}W_{n}^{(2i-1)}(f,Y_{T_{\lfloor 2^n t\rfloor,n}}) \underset{n\to \infty}{\overset{L^2}{\longrightarrow}} 0. \label{W_n}
\end{equation}
 Due to the independence between $X$ and $Y$ and thanks to (\ref{limit 2}), we have
\begin{eqnarray*}
&& E\big[\big(2^{-\frac{nH}{2}}W_{n}^{(2i-1)}(f,Y_{T_{\lfloor 2^n t\rfloor,n}})\big)^2\big] = E\bigg[E\big[\big(2^{-\frac{nH}{2}}W_{n}^{(2i-1)}(f,Y_{T_{\lfloor 2^n t\rfloor,n}})\big)^2 | Y \big] \bigg]\\
& \leq& C E\big[\psi(Y_{T_{\lfloor 2^n t\rfloor,n}},H,i,n)\big].
\end{eqnarray*}
It suffices to prove that 
\begin{equation}
E\big[\psi(Y_{T_{\lfloor 2^n t\rfloor,n}},H,i,n)\big] \underset{n\to \infty}{\longrightarrow} 0. \label{fi_n Y_n}
\end{equation}
For simplicity, we write $Y_n(t)$ instead of $Y_{T_{\lfloor 2^n t\rfloor,n}}$. We have 
\begin{eqnarray}
&& E[\psi(Y_n(t),H,i,n)]= E\big[|Y_n(t)|^{(2H-1)(4i-3)}\:|Y_n(t)|^{2H + 1}\big]\:2^{-n(2i-2)(1-H)} \label{big fi_n}\\
&&+ C\sum_{a=1}^{2i-2}\bigg(E\bigg[\big[ |Y_n(t)|(1+n) +\big(Y_n(t)\big)^2\big] \:|Y_n(t)|^{2(2H-1)(2i-1-a)}\bigg]\:2^{-\frac{n}{2}(2H-1)}\:2^{-n(1-H)[2i-1-a]} \notag\\
 &&+  E\bigg[|Y_n(t)|^{2(1-(1-H)a)}\:|Y_n(t)|^{2(2H-1)(2i-1-a)}\bigg]\:2^{-n(1-H)[2i-2]}{\bf 1}_{\{H > 1 - \frac{1}{2a}\}}\bigg)\notag\\
 &&+ CE\big[ |Y_n(t)|(1+n) +\big(Y_n(t)\big)^2\big] 2^{-n(H- \frac12)}  + C E\big[|Y_n(t)|^{2(1-(1-H)(2i-1))}\big]\: 2^{-n(1-H)(2i-2)}\notag\\
 && \hspace{8.2cm} \times{\bf 1}_{\{H > 1 - \frac{1}{(4i-2)}\}}.\notag
\end{eqnarray}
Let us prove that, for all $1 \leq a \leq 2i-2$
\[
E\big[|Y_n(t)|^{2(1-(1-H)a)}\:|Y_n(t)|^{2(2H-1)(2i-1-a)}\big]\:2^{-n(1-H)[2i-2]}{\bf 1}_{\{H > 1 - \frac{1}{2a}\}} \underset{n \to \infty}{\longrightarrow} 0,
\]
 (the proof of the convergence to 0 of the other terms in (\ref{big fi_n}) is similar). In fact, by H\"older inequality, we have
 \begin{eqnarray*}
&& E\big[|Y_n(t)|^{2(1-(1-H)a)}\:|Y_n(t)|^{2(2H-1)(2i-1-a)}\big]{\bf 1}_{\{H > 1 - \frac{1}{2a}\}}\\
&& \leq E\big[|Y_n(t)|^{4(1-(1-H)a)}\big]^\frac12 E\big[|Y_n(t)|^{4(2H-1)(2i-1-a)}\big]^\frac12{\bf 1}_{\{H > 1 - \frac{1}{2a}\}}.
\end{eqnarray*}
 Observe that for $ H > 1 - \frac{1}{2a}$ we have $2 < 4(1-(1-H)a)< 4$. So, by H\"older inequality, we deduce that $E\big[|Y_n(t)|^{4(1-(1-H)a)}\big]^\frac12 \leq E\big[(Y_n(t))^4\big]^{\frac12(1-(1-H)a)} \leq C$ for all $n \in \N$, where we have the last inequality by Lemma \ref{bounded sequence}.  On the other hand since $ H> \frac12$ we have $4(2H-1)(2i-1-a) >0$ and it is clear that there exists an integer $k_0> 1$ such that $\frac{2k_0}{4(2H-1)(2i-1-a)} > 1$. Thus, by H\"older inequality, we have $E\big[|Y_n(t)|^{4(2H-1)(2i-1-a)}\big]^\frac12 \leq E\big[ (Y_n(t))^{2k_0}\big]^{\frac{(2H-1)(2i-1-a)}{k_0}}\leq C$ for all $n \in \N$, where we have the last inequality by Lemma \ref{bounded sequence}. Finally, we deduce that
 \[
 E\big[|Y_n(t)|^{2(1-(1-H)a)}\:|Y_n(t)|^{2(2H-1)(2i-1-a)}\big]\:2^{-n(1-H)[2i-2]}{\bf 1}_{\{H > 1 - \frac{1}{2a}\}} \leq C 2^{-n(1-H)[2i-2]} \underset{n\to \infty}{\to} 0.
 \]
 Thus, (\ref{fi_n Y_n}) holds true. 
 
\subsection{\underline{Step 3}: Limit of $V_n^{(1)}(f,\cdot)$}
 Recall that for all $t \geq 0$ and $r\geq 1$,
 \[
V_n^{(r)}(f,t):=\sum_{k=0}^{\lfloor 2^n t \rfloor -1} \frac12\big(f(Z_{T_{k,n}})+f(Z_{T_{k+1,n}})\big)\big[\big(2^{\frac{nH}{2}}(Z_{T_{k+1,n}}-Z_{T_{k,n}})\big)^{r} - \mu_r \big].
\]
We claim that 
\begin{equation}
2^{-\frac{nH}{2}}V_n^{(1)}(f,t)\underset{n\to \infty}{\overset{L^2}{\longrightarrow}} \int_0^{Y_t} f(X_s)d^{\circ}X_s. \label{V_n^1}
\end{equation}
We will make use of the following Taylor's type formula (if interested the reader can find a proof of this formula, e.g., in \cite{GRV} page 1788).  Fix $f \in C_b^{\infty}$, let $F$ be a primitive of $f$. For any $a,\: b \in \mathbb{R}$,
  \begin{eqnarray}
F(b) - F(a) \notag
 &=& \frac12\big(f(a)+f(b)\big)(b-a) -
 \frac1{24}\big(f''(a)+f''(b)\big)(b-a)^3 \\
&&+ O ( |b-a|^{5}),\notag
  \end{eqnarray}
 where $|O ( |b-a|^{5})| \leq C_{F}|b-a|^{5}$, $C_F$ being a constant depending only on $F$.
One can thus write
\begin{eqnarray}
&& F(Z_{T_{\lfloor 2^n t\rfloor,n}}) - F(0)=\sum_{k=0}^{\lfloor 2^{n}t\rfloor -1}\!\!\!\big( F(Z_{T_{k+1,n}}) - F(Z_{T_{k,n}})\big)\notag\\
&\!\!\!=&
2^{-\frac{nH}{2}}V_n^{(1)}(f,t) -\frac{2^{-\frac{3nH}{2}}}{12}
V_n^{(3)}(f'',t)+ \sum_{k=0}^{\lfloor 2^n t \rfloor -1}O ( |Z_{T_{k+1,n}}-Z_{T_{k,n}}|^{5}). \label{Ito ancien}
\end{eqnarray}
Thanks to the Minkowski inequality, we have 
 \begin{eqnarray*}
&& \| \sum_{k=0}^{\lfloor 2^n t \rfloor -1}O ( |Z_{T_{k+1,n}}-Z_{T_{k,n}}|^{5})\|_2 \leq C_F \sum_{k=0}^{\lfloor 2^n t \rfloor -1}\||Z_{T_{k+1,n}}-Z_{T_{k,n}}|^{5}\|_2.
\end{eqnarray*}
Due to the independence between $X$ and $Y$, the self-similarity and the stationarity of increments of $X$, we have 
\begin{eqnarray*}
\||Z_{T_{k+1,n}}-Z_{T_{k,n}}|^{5}\|_2 &=& \big(E[(Z_{T_{k+1,n}}-Z_{T_{k,n}})^{10}]\big)^\frac12 = 
\big(E\big[E[(Z_{T_{k+1,n}}-Z_{T_{k,n}})^{10}\: |\: Y]\big]\big)^\frac12\\
&=& \big(2^{-5nH}E[X_1^{10}]\big)^\frac12 = 2^{-\frac{5nH}{2}} \|X_1^5\|_2.
\end{eqnarray*}
 Finally, thanks to the previous calculation and since $H> \frac12$, we deduce that
\begin{eqnarray}
\| \sum_{k=0}^{\lfloor 2^n t \rfloor -1}O ( |Z_{T_{k+1,n}}-Z_{T_{k,n}}|^{5})\|_2 \leq C_F \sum_{k=0}^{\lfloor 2^n t \rfloor -1} 2^{-\frac{5nH}{2}}\|X_1^{5}\|_2 \leq C_F \|X_1^{5}\|_2 t\,2^{n(1-\frac{5H}{2})} \underset{n\to\infty}{\longrightarrow} 0. \notag\\\label{reste 1}
\end{eqnarray}
By (\ref{transforming}), we have $2^{-\frac{3nH}{2}}V_n^{(3)}(f,t)
= 2^{-\frac{3nH}{2}}W_{n}^{(3)}(f,Y_{T_{\lfloor 2^n t\rfloor,n}}) + 32^{-\frac{3nH}{2}}W_{n}^{(1)}(f,Y_{T_{\lfloor 2^n t\rfloor,n}}).$ By (\ref{W_n}), we have that $2^{-\frac{3nH}{2}}W_{n}^{(3)}(f,Y_{T_{\lfloor 2^n t\rfloor,n}})$ converges to 0 in $L^2$ as $n \to \infty$. By (\ref{limit 2}) and thanks to the independence of $X$ and $Y$, we deduce that 
\begin{eqnarray*}
E\big[\big(2^{-\frac{3nH}{2}}W_{n}^{(1)}(f,Y_{T_{\lfloor 2^n t\rfloor,n}})\big)^2\big]&\leq& C 2^{-2nH}\bigg(2^{-n(H - \frac12)}\big[(1+n)E[|Y_{T_{\lfloor 2^n t\rfloor,n}}|] + E[(Y_{T_{\lfloor 2^n t\rfloor,n}})^2] \big]\\
&& + E\big[|Y_{T_{\lfloor 2^n t\rfloor,n}}|^{2H}\big] + E\big[|Y_{T_{\lfloor 2^n t\rfloor,n}}|^{4H}\big]\bigg),
\end{eqnarray*}
by H\"older inequality and thanks to Lemma \ref{bounded sequence}, we can prove easily that the last quantity converges to  0 as $n \to \infty$. Finally, we get
\begin{equation}
2^{-\frac{3nH}{2}}V_n^{(3)}(f,t) \underset{n\to \infty}{\overset{L^2}{\longrightarrow}} 0. \label{reste 2}
\end{equation}
Now, let us prove that
\begin{equation}
F(Z_{T_{\lfloor 2^n t\rfloor,n}}) - F(0) \underset{n\to \infty}{\overset{L^2}{\longrightarrow}} F(Z_t) -F(0). \label{reste 3}
\end{equation}
In fact, as it has been mentioned in the introduction, $T_{\lfloor 2^n t\rfloor,n} \overset{a.s.}{\longrightarrow}  t$ as $n\to \infty$ (see \cite[Lemma 2.2]{kh-lewis1} for a precise statement), and thanks to the continuity of $F$ as well as the continuity of the paths of $Z$, we have
 \begin{equation}
 F(Z_{T_{\lfloor 2^n t\rfloor,n}}) - F(0) \underset{n\to \infty}{\overset{a.s.}{\longrightarrow}} F(Z_t) - F(0). \label{convergence a.s.}
 \end{equation} 
  In addition, by the mean value theorem, and since $f$ is bounded, we have that
$
\big| F(Z_{T_{\lfloor 2^n t\rfloor,n}}) - F(0) \big| \leq \sup_{x\in \R}|f(x)| |Z_{T_{\lfloor 2^n t\rfloor,n}}|,
$
so, we deduce that 
\[
\|F(Z_{T_{\lfloor 2^n t\rfloor,n}}) - F(0) \|_4 \leq \sup_{x\in \R}|f(x)| \|Z_{T_{\lfloor 2^n t\rfloor,n}}\|_4.
\]
 Due to independence between $X$ and $Y$, and to the self-similarity of $X$, we have $\|Z_{T_{\lfloor 2^n t\rfloor,n}}\|_4 = \|X_{Y_{T_{\lfloor 2^n t\rfloor,n}}}\|_4 = \||Y_{T_{\lfloor 2^n t\rfloor,n}}|^H X_1\|_4 = \||Y_{T_{\lfloor 2^n t\rfloor,n}}|^H\|_4 \|X_1\|_4$. By H\"older inequality, we have $\||Y_{T_{\lfloor 2^n t\rfloor,n}}|^H\|_4 \leq (\|Y_{T_{\lfloor 2^n t\rfloor,n}}\|_4)^H.$ Finally, we have
\[
\|F(Z_{T_{\lfloor 2^n t\rfloor,n}}) - F(0) \|_4 \leq \sup_{x\in \R}|f(x)| \|X_1\|_4 (\|Y_{T_{\lfloor 2^n t\rfloor,n}}\|_4)^H.
\]
Thanks to Lemma \ref{bounded sequence} and to the previous inequality, we deduce that the sequence $\big(F(Z_{T_{\lfloor 2^n t\rfloor,n}}) - F(0)\big)_{n \in \N}$ is bounded in $L^4$. Combining this fact with (\ref{convergence a.s.}) we deduce that (\ref{reste 3}) holds true. 

Finally, combining (\ref{Ito ancien}) with (\ref{reste 1}), (\ref{reste 2}) and (\ref{reste 3}), we deduce that
\[
2^{-\frac{nH}{2}}V_n^{(1)}(f,t)\underset{n\to \infty}{\overset{L^2}{\longrightarrow}} F(Z_t) -F(0).
\]
By (\ref{limit 1'}), we have $F(X_t)-F(0) = \int_0^t f(X_s)d^{\circ}X_s$ which implies that $F(Z_t) -F(0)= \int_0^{Y_t} f(X_s)d^{\circ}X_s$. So, we deduce finally that (\ref{V_n^1}) holds true.

\subsection{\underline{Step 4}: Last step in the proof of (\ref{L2 thm1})}
Thanks to  (\ref{transforming}), we have
\begin{eqnarray*}
V_n^{(2r-1)}(f,t)
&=& \sum_{i=1}^{r}\kappa_{r,i}W_{n}^{(2i-1)}(f,Y_{T_{\lfloor 2^n t\rfloor,n}}).
\end{eqnarray*}
For $r=1$, (\ref{L2 thm1}) holds true by (\ref{V_n^1}). For $r\geq 2$, we have  $2^{-\frac{nH}{2}} V_n^{(2r-1)}(f,t) = \kappa_{r,1} 2^{-\frac{nH}{2}}V_n^{(1)}(f,t) + \sum_{i=2}^{r}\kappa_{r,i}2^{-\frac{nH}{2}}W_{n}^{(2i-1)}(f,Y_{T_{\lfloor 2^n t\rfloor,n}})$. Combining this equality with (\ref{W_n}) and (\ref{V_n^1}) we deduce that (\ref{L2 thm1}) holds true. 

\section{Proof of (\ref{fdd thm2})}
Recall that for all $t \geq 0$ and $r\geq 1$,
 \[
V_n^{(2r)}(f,t):=\sum_{k=0}^{\lfloor 2^n t \rfloor -1} \frac12\big(f(Z_{T_{k,n}})+f(Z_{T_{k+1,n}})\big)\big[\big(2^{\frac{nH}{2}}(Z_{T_{k+1,n}}-Z_{T_{k,n}})\big)^{2r} - \mu_{2r}\big],
\]
and for all $i \in \Z$,
 $\Delta_{i,n} f(X):=  \frac12(f(X_{i2^{-n/2}})+ f(X_{(i+1)2^{-n/2}})$. Thanks to Lemma \ref{lemme-algebric}, we have 
\begin{eqnarray*}
2^{-\frac{n}{2}}V_n^{(2r)}(f,t) &=& 2^{-\frac{n}{2}}\sum_{i\in \Z}\Delta_{i,n} f(X)\big[(X_{i+1}^{(n)} - X_i^{(n)})^{2r} - \mu_{2r}\big] (U_{i,n}(t) + D_{i,n}(t))\\
&=& \sum_{i\in \Z}\Delta_{i,n} f(X)\big[(X_{i+1}^{(n)} - X_i^{(n)})^{2r} - \mu_{2r}\big] \mathcal{L}_{i,n}(t),
\end{eqnarray*}
with obvious notation at the last line.
Fix $t\geq 0$. In order to study the asymptotic behavior of $2^{-\frac{n}{2}}V_n^{(2r)}(t)$ as $n$ tends to infinity (after using the adequate normalization according to the value of the Hurst parameter $H$) , we shall consider (separately) the cases when $n$ is even and when $n$ is odd.

 When $n$ is even, for any even integers $n\geq m\geq 0$ and any integer $p\geq 0$, one can decompose $2^{-\frac{n}{2}}V_n^{(2r)}(t)$ as
\[
2^{-\frac{n}{2}}V_n^{(2r)}(t) = A_{m,n,p}^{(2r)}(t) + B_{m,n,p}^{(2r)}(t) + C_{m,n,p}^{(2r)}(t) + D_{m,n,p}^{(2r)}(t) + E_{n,p}^{(2r)}(t) ,
\]
where
\begin{eqnarray*}
A_{m,n,p}^{(2r)}(t) &=&
 \sum_{-p 2^{m/2} +1\leq j\leq p 2^{m/2}} \sum_{i=(j-1)2^{\frac{n-m}{2}}}^{j2^{\frac{n-m}{2}}-1}
  \Delta_{i,n} f(X)\big[(X_{i+1}^{(n)} - X_i^{(n)})^{2r} - \mu_{2r}\big]\\
  && \hspace{6cm} \times(\mathcal{L}_{i,n}(t) - L_t^{i2^{-n/2}}(Y))\\
B_{m,n,p}^{(2r)}(t) &=&
 \sum_{-p 2^{m/2}+1\leq j\leq p 2^{m/2}}  \sum_{i=(j-1)2^{\frac{n-m}{2}}}^{j2^{\frac{n-m}{2}}-1}
  \Delta_{i,n} f(X)\big[(X_{i+1}^{(n)} - X_i^{(n)})^{2r} - \mu_{2r}\big]\\
  &&\hskip6cm \times(L_t^{i2^{-n/2}}(Y)-L_t^{j2^{-m/2}}(Y))\\
C_{m,n,p}^{(2r)}(t) &=&
 \sum_{-p 2^{m/2}+1\leq j\leq p 2^{m/2}}L_t^{j2^{-m/2}}(Y) \sum_{i=(j-1)2^{\frac{n-m}{2}}}^{j2^{\frac{n-m}{2}}-1}
  (\Delta_{i,n} f(X)-\Delta_{j,m} f(X))\\
  &&\hskip6cm \times \big[(X_{i+1}^{(n)} - X_i^{(n)})^{2r} - \mu_{2r}\big]\\
  \end{eqnarray*}
  \begin{eqnarray*}
 D_{m,n,p}^{(2r)}(t)&=& 
 \sum_{-p 2^{m/2}+1\leq j\leq p 2^{m/2}}\Delta_{j,m} f(X) L_t^{j2^{-m/2}}(Y) \sum_{i=(j-1)2^{\frac{n-m}{2}}}^{j2^{\frac{n-m}{2}}-1}\big[(X_{i+1}^{(n)} - X_i^{(n)})^{2r}\\
  &&\hskip10cm   - \mu_{2r}\big]\\
E_{n,p}^{(2r)}(t) &=&
 \sum_{i\geq p 2^{n/2}} \Delta_{i,n} f(X)\big[(X_{i+1}^{(n)} - X_i^{(n)})^{2r} - \mu_{2r}\big]\mathcal{L}_{i,n}(t)\\
 && + \sum_{i < -p2^{n/2}} \Delta_{i,n} f(X)\big[(X_{i+1}^{(n)} - X_i^{(n)})^{2r} - \mu_{2r}\big]\mathcal{L}_{i,n}(t).
\end{eqnarray*}
We can see  that since we have taken even integers $n\geq m\geq 0$ then $2^{m/2}$, $2^{\frac{n-m}{2}}$ and $2^{n/2}$ are integers as well. This justifies the validity of the previous decomposition.

When $n$ is odd, for any odd integers $n \geq m \geq 0$ we can work with the same decomposition for  $V_n^{(2r)}(t)$. The only difference is that we have to replace the sum $\sum_{-p 2^{m/2} +1\leq j\leq p 2^{m/2}}$ in $A_{m,n,p}^{(2r)}(t)$, $B_{m,n,p}^{(2r)}(t)$, $C_{m,n,p}^{(2r)}(t)$ and $D_{m,n,p}^{(2r)}(t)$ by $\sum_{-p 2^{\frac{m+1}{2}} +1\leq j\leq p 2^{\frac{m+1}{2}}}$. And instead of $\sum_{i\geq p 2^{n/2}}$ and $\sum_{i < -p2^{n/2}}$ in $E_{n,p}^{(2r)}(t)$, we must consider $\sum_{i\geq p 2^{\frac{n+1}{2}}}$ and $\sum_{i < -p2^{\frac{n+1}{2}}}$ respectively. The analysis can then be done {\sl mutatis mutandis}.

Suppose that $\frac14<H\leq \frac12$. Firstly, we will prove that $2^{-\frac{n}{4}}A_{m,n,p}^{(2r)}(t)$, $2^{-\frac{n}{4}}B_{m,n,p}^{(2r)}(t)$, $2^{-\frac{n}{4}}C_{m,n,p}^{(2r)}(t)$ and $2^{-\frac{n}{4}}E_{n,p}^{(2r)}(t)$ converge to $0$ in $L^2$  by letting $n$, then $m$, then $p$ tend to infinity. Secondly, we will study  the f.d.d. convergence in law  of $\big( 2^{-\frac{n}{4}}D_{m,n,p}^{(2r)}(t) \big)_{t\geq 0}$,
which will then be equivalent to the f.d.d. convergence in law  of
$\big( 2^{-\frac{3n}{4}}V_n^{(2r)}(t)\big)_{t\geq 0}$. 
\begin{itemize}
\item[(1)] \underline{$2^{-\frac{n}{4}}A_{m,n,p}^{(2r)}(t)\underset{n\to \infty}{\overset{L^2}{\longrightarrow}} 0$\::}
\end{itemize}
We have, for all $r \in \N^*$, 
\begin{equation}
x^{2r} =\sum_{a=1}^r b_{2r,a}H_{2a}(x)+ \mu_{2r}, \label{hermite polynomial pair}
\end{equation}
 where $H_n$ is the $n$th Hermite polynomial, $\mu_{2r}=E[N^{2r}]$ with $N \sim \mathcal{N}(0,1)$, and $b_{2r,a}$ are some explicit constants (if interested, the reader can find these explicit constants, e.g.,  in \cite[Corollary 1.2]{RZ1}). We deduce that 
\begin{eqnarray}
 A_{m,n,p}^{(2r)}(t)&=& \sum_{a=1}^r b_{2r,a}\,  \sum_{-p 2^{m/2} +1\leq j\leq p 2^{m/2}} \sum_{i=(j-1)2^{\frac{n-m}{2}}}^{j2^{\frac{n-m}{2}}-1}
  \Delta_{i,n} f(X)H_{2a}(X_{i+1}^{(n)} - X_i^{(n)})\notag\\
  && \hspace{6cm} \times(\mathcal{L}_{i,n}(t) - L_t^{i2^{-n/2}}(Y))\notag\\
  &=&\sum_{a=1}^rb_{2r,a}A_{m,n,p,a}^{(2r)}(t), \label{decomposition A}
\end{eqnarray}
with obvious notation at the last line. It suffices to prove that for any fixed $m$ and $p$ and for all $a\in\{1, \ldots,r\}$
\begin{equation}
2^{-\frac{n}{4}}A_{m,n,p,a}^{(2r)}(t)\underset{n\to \infty}{\overset{L^2}{\longrightarrow}} 0. \label{1, H< 3/4}
\end{equation}
Set $
\phi_n(i,i'):= \Delta_{i,n} f(X)\Delta_{i',n} f(X).$ Thanks to (\ref{linear-isometry}), (\ref{adjoint}), (\ref{product}) and to the independence of $X$ and $Y$, we have
\begin{eqnarray}
&&E[(2^{-\frac{n}{4}} A_{m,n,p,a}^{(2r)}(t))^2] = \bigg|2^{2nHa - \frac{n}{2}}\sum_{-p 2^{m/2} +1\leq j, j'\leq p 2^{m/2}} \sum_{i=(j-1)2^{\frac{n-m}{2}}}^{j2^{\frac{n-m}{2}}-1} \sum_{i'=(j'-1)2^{\frac{n-m}{2}}}^{j'2^{\frac{n-m}{2}}-1}E\big[\phi_n(i,i')\notag\\
&& \times I_{2a}(\delta_{(i+1)2^{-\frac{n}{2}}})I_{2a}(\delta_{(i'+1)2^{-\frac{n}{2}}})\big]E\big[(\mathcal{L}_{i,n}(t) - L_t^{i2^{-\frac{n}{2}}}(Y))(\mathcal{L}_{i',n}(t) - L_t^{i'2^{-\frac{n}{2}}}(Y))\big]\bigg|\notag
\end{eqnarray}
\begin{eqnarray}
&\leq & 2^{2nHa -\frac{n}{2}}\sum_{-p 2^{m/2} +1\leq j, j'\leq p 2^{m/2}} \sum_{i=(j-1)2^{\frac{n-m}{2}}}^{j2^{\frac{n-m}{2}}-1} \sum_{i'=(j'-1)2^{\frac{n-m}{2}}}^{j'2^{\frac{n-m}{2}}-1}\big|E\big[\phi_n(i,i')I_{2a}(\delta_{(i+1)2^{-\frac{n}{2}}}^{\otimes 2a})\times\notag\\
&&  I_{2a}(\delta_{(i'+1)2^{-\frac{n}{2}}}^{\otimes 2a})\big]\big|\|\mathcal{L}_{i,n}(t) - L_t^{i2^{-\frac{n}{2}}}(Y)\|_2\times \|\mathcal{L}_{i',n}(t) - L_t^{i'2^{-\frac{n}{2}}}(Y)\|_2\notag\\
&\leq & 2^{2nHa -\frac{n}{2}}\sum_{l=0}^{2a}l!\binom{2a}{l}^2\sum_{-p 2^{m/2} +1\leq j, j'\leq p 2^{m/2}} \sum_{i=(j-1)2^{\frac{n-m}{2}}}^{j2^{\frac{n-m}{2}}-1} \sum_{i'=(j'-1)2^{\frac{n-m}{2}}}^{j'2^{\frac{n-m}{2}}-1}\big|E\big[\phi_n(i,i')\times\notag\\
&& I_{4a-2l}\big(\delta_{(i+1)2^{-n/2}}^{\otimes 2a-l}\otimes\delta_{(i'+1)2^{-n/2}}^{\otimes 2a-l}\big)\big]\big| |\langle \delta_{(i+1)2^{-n/2}}, \delta_{(i'+1)2^{-n/2}} \rangle|^l \|\mathcal{L}_{i,n}(t) - L_t^{i2^{-\frac{n}{2}}}(Y)\|_2 \notag\\
&& \times \|\mathcal{L}_{i',n}(t) - L_t^{i'2^{-\frac{n}{2}}}(Y)\|_2 \notag\\
&=& 2^{2nHa -\frac{n}{2}}\sum_{l=0}^{2a}l!\binom{2a}{l}^2\sum_{-p 2^{m/2} +1\leq j, j'\leq p 2^{m/2}} \sum_{i=(j-1)2^{\frac{n-m}{2}}}^{j2^{\frac{n-m}{2}}-1} \sum_{i'=(j'-1)2^{\frac{n-m}{2}}}^{j'2^{\frac{n-m}{2}}-1}\big|E\big[\big\langle D^{4a-2l}\big(\phi_n(i,i')\big)\notag\\
&& , \delta_{(i+1)2^{-n/2}}^{\otimes 2a-l}\otimes\delta_{(i'+1)2^{-n/2}}^{\otimes 2a-l}\big\rangle\big]\big| |\langle \delta_{(i+1)2^{-n/2}}, \delta_{(i'+1)2^{-n/2}} \rangle|^l  \|\mathcal{L}_{i,n}(t) - L_t^{i2^{-\frac{n}{2}}}(Y)\|_2 \notag\\
&& \times \|\mathcal{L}_{i',n}(t) - L_t^{i'2^{-\frac{n}{2}}}(Y)\|_2\notag\\
&=& \sum_{l=0}^{2a}l!\binom{2a}{l}^2 \Upsilon_n^{(l,a)}(t), \label{reference 1}
\end{eqnarray}
by obvious notation at the last line. By the points 2 and 3 of Proposition \ref{local time}, see also (3.14) in \cite{RZ1} for the detailed proof, we have
\[ \|\mathcal{L}_{i,n}(t) - L_t^{i2^{-\frac{n}{2}}}(Y)\|_2 \leq 2\sqrt{\mu}\|K\|_4 \:t^{1/8} n2^{-n/4}\:2^{-n/8}|i|^{1/4} + 2\|K\|_{4}\|L_{t}^0(Y)\|_2^{1/2}n2^{-n/4}.\]
 Since $ -p 2^{m/2}+1\leq j \leq p 2^{m/2}$ and $(j-1)2^{\frac{n-m}{2}}\leq i\leq j2^{\frac{n-m}{2}}-1$, we deduce that $-p 2^{n/2} \leq i\leq p 2^{n/2} -1$. So, $|i| \leq p2^{n/2}$. Consequently we have that  $|i|^{1/4} \leq p^{1/4}2^{n/8}$, which shows that $ \|\mathcal{L}_{i,n}(t) - L_t^{i2^{-\frac{n}{2}}}(Y)\|_2 \leq C(p^{1/4} +1)n2^{-\frac{n}{4}}$. Finally, we deduce that
\begin{equation}
\|\mathcal{L}_{i,n}(t) - L_t^{i2^{-\frac{n}{2}}}(Y)\|_2\times \|\mathcal{L}_{i',n}(t) - L_t^{i'2^{-\frac{n}{2}}}(Y)\|_2 \leq C(p^{1/4} +1)^2n^22^{-\frac{n}{2}}. \label{temps local bound 1}
\end{equation}
Now, observe that, by the same arguments that has been used to show  (\ref{Leibnitz2}) and since $f\in C_b^\infty$, we have
\begin{eqnarray}
&&\Theta_{i,i',n}^{(a,l)} := \big|E\big[\big\langle D^{4a-2l}\big(\phi_n(i,i') \big) , \delta_{(i+1)2^{-n/2}}^{\otimes 2a-l}\otimes\delta_{(i'+1)2^{-n/2}}^{\otimes 2a-l}\big\rangle\big]\big| \notag\\
&\leq& C\sum_{k=0}^{4a-2l}\binom{4a-2l}{k} \bigg| \big \langle \big(\varepsilon_{i2^{-n/2}}^{\otimes k} + \varepsilon_{(i+1)2^{-n/2} }^{\otimes k}\big) \tilde{\otimes}\big(\varepsilon_{i'2^{-n/2}}^{\otimes 4a-2l -k} + \varepsilon_{(i'+1)2^{-n/2} }^{\otimes 4a-2l -k}\big),\notag\\ &&\hspace{3.5cm}
 \delta_{(i+1)2^{-n/2}}^{\otimes 2a-l}\otimes\delta_{(i'+1)2^{-n/2}}^{\otimes 2a-l} \big \rangle \bigg|.\notag
\end{eqnarray}
Since  $H\leq \frac12$, thanks to (\ref{0}), we have $\Theta_{i,i',n}^{(a,l)}\leq C 2^{-nH(4a-2l)}$. So, by combining (\ref{reference 1}) with (\ref{temps local bound 1}), for $l=0$, we have
\begin{equation} \Upsilon_n^{(0,a)}(t) \leq C 2^{2nHa-\frac{n}{2}} \sum_{i,i'= -p2^{\frac{n}{2}}}^{p2^{\frac{n}{2}} -1} (2^{-4nHa}(p^{\frac14} + 1)^2 n^2 2^{-\frac{n}{2}})\leq C(p(p^{\frac14} + 1))^2n^22^{-2nHa}, \label{upsilon1}
\end{equation}
for $l \neq 0$, we have
\[ \Upsilon_n^{(l,a)}(t) \leq C(p^{\frac14} + 1)^2 n^2 2^{2nHa-n}\bigg(2^{-nH(4a-2l)} \sum_{i,i'= -p2^{\frac{n}{2}}}^{p2^{\frac{n}{2}} -1}|\langle \delta_{(i+1)2^{-n/2}}, \delta_{(i'+1)2^{-n/2}} \rangle|^l\bigg).
\]
By the same arguments that has been used in the proof of (\ref{2}), one can prove that for $H< 1- \frac{1}{2l}$, we have
\begin{eqnarray}
\sum_{i,i'= -p2^{\frac{n}{2}}}^{p2^{\frac{n}{2}} -1}|\langle \delta_{(i+1)2^{-n/2}}, \delta_{(i'+1)2^{-n/2}} \rangle|^l \leq C_{H,l}\,p\, 2^{n(\frac12 -lH)}. \label{new delta 1}
\end{eqnarray}
For $H=\frac{1}{2}$, thanks to (\ref{inner product 2}) and to the discussion of the case $H= \frac12$ after (\ref{alpha}), we have
\begin{equation*}
\sum_{i,i'= -p2^{\frac{n}{2}}}^{p2^{\frac{n}{2}} -1}|\langle \delta_{(i+1)2^{-n/2}}, \delta_{(i'+1)2^{-n/2}} \rangle| \leq 2^{-\frac{n}{2}}\big(p2^{\frac{n}{2}} -(-p2^{\frac{n}{2}})\big)=2p, \label{new delta 2}
\end{equation*}
thus, (\ref{new delta 1}) holds true for $l=1$ and $H= \frac12$.
So, since  $H\leq \frac12$, we deduce that 
\begin{eqnarray}
\sum_{l=1}^{2a}\Upsilon_n^{(l,a)}(t) &\leq& C p(p^{\frac14} + 1)^2 n^2\sum_{l=1}^{2a} 2^{2nHa-n}(2^{-nH(4a-2l)}2^{n(\frac12 -lH)})\notag\\
&& = C p(p^{\frac14} + 1)^2 n^2 2^{-\frac{n}{2}}\sum_{l=1}^{2a}2^{-nH(2a-l)}. \label{upsilon2}
 \end{eqnarray}
 By combining (\ref{reference 1}) with (\ref{upsilon1}) and (\ref{upsilon2}), we deduce that (\ref{1, H< 3/4}) holds true for $H\leq \frac12$.

\begin{itemize}
\item[(2)] \underline{$2^{-\frac{n}{4}}B_{m,n,p}^{(2r)}(t)\overset{L^2}{\longrightarrow} 0$ as $m\to \infty$, uniformly on $n$\::}
\end{itemize}
Using (\ref{hermite polynomial pair}), we get 
\begin{eqnarray}
 B_{m,n,p}^{(2r)}(t)&=& \sum_{a=1}^r b_{2r,a}\,  \sum_{-p 2^{m/2} +1\leq j\leq p 2^{m/2}} \sum_{i=(j-1)2^{\frac{n-m}{2}}}^{j2^{\frac{n-m}{2}}-1}
  \Delta_{i,n} f(X)H_{2a}(X_{i+1}^{(n)} - X_i^{(n)})\notag\\
  && \hspace{6cm} \times(L_t^{i2^{-n/2}}(Y)- L_t^{j2^{-m/2}}(Y) )\notag\\
  &=&\sum_{a=1}^rb_{2r,a}B_{m,n,p,a}^{(2r)}(t), \label{decomposition B}
\end{eqnarray}
with obvious notation at the last line. It suffices to prove that for any fixed $p$ and for all $a\in\{1, \ldots,r\}$
\begin{equation}
2^{-\frac{n}{4}}B_{m,n,p,a}^{(2r)}(t)\underset{m\to\infty}{\overset{L^2}{\longrightarrow}} 0, \label{2, H< 3/4}
\end{equation}
uniformly on $n$.
By the same arguments that has been used to prove (\ref{reference 1}), we get
\begin{eqnarray*}
&&E[(2^{-\frac{n}{4}} B_{m,n,p,a}^{(2r)}(t))^2] \notag\\
&\leq& 2^{2nHa -\frac{n}{2}}\sum_{l=0}^{2a}l!\binom{2a}{l}^2\sum_{-p 2^{m/2} +1\leq j, j'\leq p 2^{m/2}} \sum_{i=(j-1)2^{\frac{n-m}{2}}}^{j2^{\frac{n-m}{2}}-1} \sum_{i'=(j'-1)2^{\frac{n-m}{2}}}^{j'2^{\frac{n-m}{2}}-1}\big|E\big[\big\langle D^{4a-2l}\big(\phi_n(i,i')\big)\notag\\
&& , \delta_{(i+1)2^{-n/2}}^{\otimes 2a-l}\otimes\delta_{(i'+1)2^{-n/2}}^{\otimes 2a-l}\big\rangle\big]\big| |\langle \delta_{(i+1)2^{-n/2}}, \delta_{(i'+1)2^{-n/2}} \rangle|^l  \big|E[(L_t^{i2^{-n/2}}(Y)- L_t^{j2^{-m/2}}(Y)) \notag\\
&& \times (L_t^{i'2^{-n/2}}(Y)- L_t^{j'2^{-m/2}}(Y))]\big|,
\end{eqnarray*}
by Proposition \ref{local time} (point 2) and Cauchy-Schwarz, we have
\begin{eqnarray*}
&&\big|E[(L_t^{i2^{-n/2}}(Y)- L_t^{j2^{-m/2}}(Y))(L_t^{i'2^{-n/2}}(Y)- L_t^{j'2^{-m/2}}(Y))]\big|\\
&\leq & \mu^2 \sqrt{t} \sqrt{|i2^{-n/2}-j2^{-m/2}||i'2^{-n/2}-j'2^{-m/2}|}\leq \mu^2 \sqrt{t}2^{-m/2}.
\end{eqnarray*}
So, we deduce that
\begin{eqnarray}
&&E[(2^{-\frac{n}{4}} B_{m,n,p,a}^{(2r)}(t))^2] \leq C 2^{-\frac{m}{2}} 2^{2nHa -\frac{n}{2}}\sum_{l=0}^{2a}l!\binom{2a}{l}^2\sum_{-p 2^{m/2} +1\leq j, j'\leq p 2^{m/2}} \sum_{i=(j-1)2^{\frac{n-m}{2}}}^{j2^{\frac{n-m}{2}}-1}\notag\\
&& \sum_{i'=(j'-1)2^{\frac{n-m}{2}}}^{j'2^{\frac{n-m}{2}}-1}\big|E\big[\big\langle D^{4a-2l}\big(\phi_n(i,i')\big)
 , \delta_{(i+1)2^{-n/2}}^{\otimes 2a-l}\otimes\delta_{(i'+1)2^{-n/2}}^{\otimes 2a-l}\big\rangle\big]\big| |\langle \delta_{(i+1)2^{-n/2}}, \delta_{(i'+1)2^{-n/2}} \rangle|^l \notag\\ 
 &=& \sum_{l=0}^{2a}l!\binom{2a}{l}^2\Lambda_{n,m}^{(l,a)}(t),\label{reference 2}
\end{eqnarray}
by obvious notation at the last line. 

By the same arguments that has been used in the proof of (\ref{1, H< 3/4}), we have, for $\frac14<H\leq \frac12$, and $l=0$
\begin{equation} 
\Lambda_{n,m}^{(0,a)}(t) \leq C 2^{-\frac{m}{2}} 2^{2nHa-\frac{n}{2}} \sum_{i,i'= -p2^{\frac{n}{2}}}^{p2^{\frac{n}{2}} -1} (2^{-4nHa})\leq C p^22^{-\frac{m}{2}}2^{-n(2Ha-\frac12)} \leq C p^2 2^{-\frac{m}{2}} , \label{lambda1}
\end{equation}
for $l \neq 0$, we have
\[ \Lambda_{n,m}^{(l,a)}(t) \leq C2^{-\frac{m}{2}} 2^{2nHa-\frac{n}{2}}\bigg(2^{-nH(4a-2l)} \sum_{i,i'= -p2^{\frac{n}{2}}}^{p2^{\frac{n}{2}} -1}|\langle \delta_{(i+1)2^{-n/2}}, \delta_{(i'+1)2^{-n/2}} \rangle|^l\bigg).
\]
So,  thanks to (\ref{new delta 1}), we deduce that 
\begin{eqnarray}
\sum_{l=1}^{2a}\Lambda_{n,m}^{(l,a)}(t) &\leq&  C\, p\, 2^{-\frac{m}{2}} 2^{2nHa-\frac{n}{2}}(\sum_{l=1}^{2a}2^{-nH(4a-2l)}2^{n(\frac12 -lH)})\notag\\
&& = C\, p\, 2^{-\frac{m}{2}} (\sum_{l=1}^{2a}2^{-nH(2a-l)})\leq C\,p\,2^{-\frac{m}{2}}. \label{lambda2}
 \end{eqnarray}
 By combining (\ref{reference 2}) with (\ref{lambda1}) and (\ref{lambda2}), we deduce that (\ref{2, H< 3/4}) holds true for $\frac14<H\leq \frac12$.

 \begin{itemize}
\item[(3)] \underline{$2^{-\frac{n}{4}}C_{m,n,p}^{(2r)}(t)\overset{L^2}{\longrightarrow} 0$ as $n\to \infty$, then $m\to \infty$\::}
\end{itemize}
Using (\ref{hermite polynomial pair}), we get
\begin{eqnarray*}
C_{m,n,p}^{(2r)}(t)&=& \sum_{a=1}^r b_{2r,a}\sum_{-p 2^{m/2}+1\leq j\leq p 2^{m/2}}L_t^{j2^{-m/2}}(Y) \sum_{i=(j-1)2^{\frac{n-m}{2}}}^{j2^{\frac{n-m}{2}}-1}
  (\Delta_{i,n} f(X)-\Delta_{j,m} f(X))\\
  &&\hskip6cm \times H_{2a}(X_{i+1}^{(n)} - X_i^{(n)})\\
  \end{eqnarray*}
  \begin{eqnarray*}
  &=&\sum_{a=1}^r b_{2r,a}\sum_{-p 2^{m/2}+1\leq j\leq p 2^{m/2}}L_t^{j2^{-m/2}}(Y) \sum_{i=(j-1)2^{\frac{n-m}{2}}}^{j2^{\frac{n-m}{2}}-1}
  \frac12(f(X_{i2^{-\frac{n}{2}}})-f(X_{j2^{-\frac{m}{2}}}))\\
  &&\hskip6cm \times H_{2a}(X_{i+1}^{(n)} - X_i^{(n)})\\
  && +\sum_{a=1}^r b_{2r,a}\sum_{-p 2^{m/2}+1\leq j\leq p 2^{m/2}}L_t^{j2^{-m/2}}(Y) \sum_{i=(j-1)2^{\frac{n-m}{2}}}^{j2^{\frac{n-m}{2}}-1}
  \frac12(f(X_{(i+1)2^{-\frac{n}{2}}})\\
  &&\hskip6cm -f(X_{(j+1)2^{-\frac{m}{2}}}))H_{2a}(X_{i+1}^{(n)} - X_i^{(n)})\\
  &=& \sum_{a=1}^rb_{2r,a} \big(C_{m,n,p,a}^{(1)}(t) + C_{m,n,p,a}^{(2)}(t)\big),
\end{eqnarray*}
with obvious notation. It suffices to prove that for any fixed $p$ and for all $a\in\{1, \ldots,r\}$
\begin{equation}
2^{-\frac{n}{4}}C_{m,n,p,a}^{(2)}(t)\overset{L^2}{\longrightarrow} 0, \label{3, H< 3/4}
\end{equation}
as $n \to \infty$, then $m \to \infty$. By obvious notation, we have
\[C_{m,n,p,a}^{(2)}(t)= \sum_{-p 2^{m/2}+1\leq j\leq p 2^{m/2}}L_t^{j2^{-m/2}}(Y) \sum_{i=(j-1)2^{\frac{n-m}{2}}}^{j2^{\frac{n-m}{2}}-1}\Delta_{i,j}^{n,m}f(X) H_{2a}(X_{i+1}^{(n)} - X_i^{(n)}).\]
Thanks to the independence of $X$ and $Y$, and to the first point of Proposition \ref{local time}, we have
\begin{eqnarray*}
&&E\big[\big(2^{-\frac{n}{4}}C_{m,n,p,a}^{(2)}(t)\big)^2\big]=2^{-\frac{n}{2}}\bigg|\sum_{-p 2^{m/2}+1\leq j,j'\leq p 2^{m/2}}E\big(L_t^{j2^{-m/2}}(Y)L_t^{j'2^{-m/2}}(Y)\big)\\
&& \sum_{i=(j-1)2^{\frac{n-m}{2}}}^{j2^{\frac{n-m}{2}}-1}\sum_{i'=(j'-1)2^{\frac{n-m}{2}}}^{j'2^{\frac{n-m}{2}}-1}
  E\bigg(\Delta_{i,j}^{n,m}f(X)\Delta_{i',j'}^{n,m}f(X)H_{2a}(X_{i+1}^{(n)} - X_i^{(n)})H_{2a}(X_{i'+1}^{(n)} - X_{i'}^{(n)})\bigg)\bigg|\\
  && \leq C 2^{-\frac{n}{2}}\sum_{-p 2^{m/2}+1\leq j,j'\leq p 2^{m/2}}\\
&&  \sum_{i=(j-1)2^{\frac{n-m}{2}}}^{j2^{\frac{n-m}{2}}-1}\sum_{i'=(j'-1)2^{\frac{n-m}{2}}}^{j'2^{\frac{n-m}{2}}-1}
  \bigg|E\bigg(\Delta_{i,j}^{n,m}f(X)\Delta_{i',j'}^{n,m}f(X)H_{2a}(X_{i+1}^{(n)} - X_i^{(n)})H_{2a}(X_{i'+1}^{(n)} - X_{i'}^{(n)})\bigg)\bigg|,
\end{eqnarray*}
by the same arguments that has been used previously for several times, we deduce that
\begin{eqnarray}
&&E\big[\big(2^{-\frac{n}{4}}C_{m,n,p,a}^{(2)}(t)\big)^2 \big]\leq    2^{-n/2} 2^{2nHa} \sum_{-p 2^{m/2}+1\leq j,j'\leq p 2^{m/2}}\sum_{i=(j-1)2^{\frac{n-m}{2}}}^{j2^{\frac{n-m}{2}}-1}\sum_{i'=(j'-1)2^{\frac{n-m}{2}}}^{j'2^{\frac{n-m}{2}}-1}  \notag\\
&& \sum_{l=0}^{2a}l! \binom{2a}{l}^2 \bigg| E\big[ \Delta_{i,j}^{n,m}f(X)\Delta_{i',j'}^{n,m}f(X)I_{4a-2l}(\delta_{(j+1)2^{-n/2}}^{\otimes (2a-l)}\otimes\delta_{(j'+1)2^{-n/2}}^{\otimes (2a-l)})
\big] \bigg| \notag\\
&& \hskip8.7cm \times \big|\langle \delta_{(j+1)2^{-n/2}}; \delta_{(j'+1)2^{-n/2}}\rangle \big|^l \notag \\
&=&  2^{-n/2} 2^{2nHa}\sum_{l=0}^{2a} l!
\binom{2a}{l}^2
O_{n,m}^{l}(t), \label{reference 3}
\end{eqnarray}
with obvious notation. Following the proof of (\ref{long1}), we get that
\begin{itemize}
\item If $l=2a$ then
the term $O_{n,m}^{2a}(t)$ in (\ref{reference 3}) can be bounded by
\begin{eqnarray*}
&&\frac14 \sup_{|x-y| \leq 2^{-m/2}} E(|f(X_x) - f(X_y)|^2)\sum_{i,i'= -p2^{\frac{n}{2}}}^{p2^{\frac{n}{2}} -1}|\langle \delta_{(i+1)2^{-n/2}} ; \delta_{(i'+1)2^{-n/2}} \rangle|^{2a}. 
\end{eqnarray*}
Since $H\leq \frac12$ and thanks to (\ref{new delta 1}), observe that 
 \begin{equation}
 O_{n,m}^{2a}(t) \leq C\, p\, 2^{n(\frac12 - 2Ha)}  \sup_{|x-y| \leq 2^{-m/2}} E(|f(X_x) - f(X_y)|^2). \label{O1}
 \end{equation}
 
\item If $1 \leq l \leq 2a-1$ then, by (\ref{new delta 1}) among other things used in the proof of (\ref{long1}), we have
\begin{eqnarray}
 O_{n,m}^{l}(t) &\leq &C (2^{-nH})^{(4a-2l)}\sum_{i,i'= -p2^{\frac{n}{2}}}^{p2^{\frac{n}{2}} -1}\big|\langle \delta_{(i+1)2^{-n/2}}; \delta_{(i'+1)2^{-n/2}}\rangle \big|^l \notag\\
&\leq& C\,p\, 2^{-nH(4a-2l)}\: 2^{n(\frac12 - lH)}. \label{O2}
\end{eqnarray}

\item If  $l=0$ then 
\begin{equation}
O_{n,m}^{0}(t) \leq C (2^{-nH})^{4a}(2p2^{\frac{n}{2}})^2 \leq C\,p^2\, 2^{-4nHa} 2^n. \label{O3}
\end{equation}
\end{itemize}
By combining (\ref{reference 3}) with (\ref{O1}), (\ref{O2}) and (\ref{O3}), we get
\begin{eqnarray*}
E\big[\big(2^{-\frac{n}{4}}C_{m,n,p,a}^{(2)}(t)\big)^2 \big]&\leq & C \bigg( \sup_{|x-y| \leq 2^{-m/2}} E(|f(X_x) - f(X_y)|^2) + p(\sum_{l=1}^{2a-1} 2^{-nH(2a-l)})\\
 && \hspace{3cm} + p^2 2^{-n(2Ha - \frac12)}\bigg), 
\end{eqnarray*}
it is then clear that, since $\frac14 < H \leq \frac12$, the last quantity converges to 0 as $n \to \infty$ and then $m\to \infty$. Finally, we have proved that (\ref{3,  H< 3/4}) holds true.
 
\begin{itemize}
\item[(4)] \underline{$2^{-\frac{n}{4}}E_{n,p}^{(2r)}(t)\overset{L^2}{\longrightarrow} 0$ as $p\to \infty$, uniformly on $n$\::}
\end{itemize}
Using (\ref{hermite polynomial pair}), we get 
\begin{eqnarray}
 E_{n,p}^{(2r)}(t)&=& \sum_{a=1}^r b_{2r,a}\bigg(
 \sum_{i\geq p 2^{n/2}} \Delta_{i,n} f(X)H_{2a}(X_{i+1}^{(n)} - X_i^{(n)}) \mathcal{L}_{i,n}(t)\notag\\
 && + \sum_{i < -p2^{n/2}} \Delta_{i,n} f(X)H_{2a}(X_{i+1}^{(n)} - X_i^{(n)})\mathcal{L}_{i,n}(t)\bigg)\notag\\
  &=&\sum_{a=1}^rb_{2r,a}E_{n,p,a}^{(2r)}(t), \label{decomposition E}
\end{eqnarray}
with obvious notation at the last line. It suffices to prove that for all $a\in\{1, \ldots,r\}$
\begin{equation}
2^{-\frac{n}{4}}E_{n,p,a}^{(2r)}(t)\underset{p\to\infty}{\overset{L^2}{\longrightarrow}} 0, \label{4, H< 3/4}
\end{equation}
 uniformly on $n$. By the same arguments that has been used previously, we have
\begin{eqnarray}
&&E[(2^{-\frac{n}{4}} E_{n,p,a}^{(2r)}(t))^2] \notag\\
&\leq& 22^{2nHa -\frac{n}{2}}\sum_{l=0}^{2a}l!\binom{2a}{l}^2\sum_{i,i'\geq p 2^{n/2}} \big|E\big[\big\langle D^{4a-2l}\big(\phi_n(i,i')\big), \delta_{(i+1)2^{-n/2}}^{\otimes 2a-l}\otimes\delta_{(i'+1)2^{-n/2}}^{\otimes 2a-l}\big\rangle\big]\big|\notag\\
&& \times |\langle \delta_{(i+1)2^{-n/2}}, \delta_{(i'+1)2^{-n/2}} \rangle|^l  \big|E[\mathcal{L}_{i,n}(t)\mathcal{L}_{i',n}(t) ]\big|\label{first E}\\
&&+ 22^{2nHa -\frac{n}{2}}\sum_{l=0}^{2a}l!\binom{2a}{l}^2\sum_{i,i' < -p2^{n/2}} \big|E\big[\big\langle D^{4a-2l}\big(\phi_n(i,i')\big), \delta_{(i+1)2^{-n/2}}^{\otimes 2a-l}\otimes\delta_{(i'+1)2^{-n/2}}^{\otimes 2a-l}\big\rangle\big]\big|\notag\\
&& \times |\langle \delta_{(i+1)2^{-n/2}}, \delta_{(i'+1)2^{-n/2}} \rangle|^l  \big|E[\mathcal{L}_{i,n}(t)\mathcal{L}_{i',n}(t) ]\big|.\notag
\end{eqnarray} 
It suffices to prove the convergence to 0 of the quantity given in (\ref{first E}). We have, 
\begin{eqnarray*}
&& 2^{2nHa -\frac{n}{2}}\sum_{l=0}^{2a}l!\binom{2a}{l}^2\sum_{i,i'\geq p 2^{n/2}} \big|E\big[\big\langle D^{4a-2l}\big(\phi_n(i,i')\big), \delta_{(i+1)2^{-n/2}}^{\otimes 2a-l}\otimes\delta_{(i'+1)2^{-n/2}}^{\otimes 2a-l}\big\rangle\big]\big|\notag\\
&&\hspace{2cm} \times |\langle \delta_{(i+1)2^{-n/2}}, \delta_{(i'+1)2^{-n/2}} \rangle|^l  \big|E[\mathcal{L}_{i,n}(t)\mathcal{L}_{i',n}(t) ]\big|\notag\\
&& = \sum_{l=0}^{2a}l!\binom{2a}{l}^2 \Omega_{n,p}^{(l,a)}(t), 
\end{eqnarray*}
with obvious notation at the last line. It is enough to prove that, for all $l \in \{0, \ldots,2a\}$:
\begin{equation}
\Omega_{n,p}^{(l,a)}(t)\underset{p\to\infty}{\longrightarrow} 0, \label{4', H<3/4}
\end{equation}
  uniformly on $n$.
By the same arguments that has been used in the proof of (\ref{1, H< 3/4}), for $\frac14<H\leq \frac12$, we have

{\bf \underline{For $l=0:$}}
\begin{equation*} 
\Omega_{n,p}^{(0,a)}(t) \leq C  2^{2nHa-\frac{n}{2}}2^{-4nHa} \sum_{i,i'\geq p 2^{n/2}}\big|E[\mathcal{L}_{i,n}(t)\mathcal{L}_{i',n}(t) ]\big|. 
\end{equation*}
By the third point of Proposition \ref{local time}, we have
\[ |\mathcal{L}_{i,n}(t)| \leq L_t^{i2^{-n/2}}(Y)+2Kn2^{-n/4}\sqrt{L_t^{i2^{-n/2}}(Y)}\] so that
\begin{equation*} 
E\big[ \mathcal{L}_{i,n}(t)^2\big] \leq 2E\big[ L_t^{i2^{-n/2}}(Y)^2\big] + 8n^2 2^{-n/2}\|K^2\|_2\|L_t^{i2^{-n/2}}(Y)\|_2,
\end{equation*}
which implies
\begin{equation}\label{local1}
\|\mathcal{L}_{i,n}(t)\|_2 \leq C\|L_t^{i2^{-n/2}}(Y)\|_2 + Cn 2^{-n/4}\|L_t^{i2^{-n/2}}(Y)\|_2^{\frac12}.
\end{equation}
On the other hand, thanks to the point 1 of Proposition \ref{local time}, we have
\begin{equation}\label{local2}
 E\big[ L_t^{i2^{-n/2}}(Y)^2\big] \leq C t \exp\big(-\frac{(i2^{-n/2})^2}{2t}\big).
\end{equation}
Consequently, we get
\begin{equation}\label{local3}
\|L_{t}^{i2^{-n/2}}(Y)\|_2 \leq C t^{1/2} \exp\big(-\frac{(i2^{-n/2})^2}{4t}\big).
\end{equation}
By combining (\ref{local1}) with (\ref{local2}) and (\ref{local3}), we deduce that
\begin{eqnarray}\label{local4}
\|\mathcal{L}_{i,n}(t)\|_2 &\leq& C \exp\big(-\frac{(i2^{-n/2})^2}{4t}\big) + Cn 2^{-n/4}\exp\big(-\frac{(i2^{-n/2})^2}{8t}\big)\notag\\
&\leq & C \exp\big(-\frac{(i2^{-n/2})^2}{4t}\big) + C\exp\big(-\frac{(i2^{-n/2})^2}{8t}\big).
\end{eqnarray}
Observe that, by Cauchy-Schwarz inequality, we have
\begin{eqnarray*} 
&&\Omega_{n,p}^{(0,a)}(t) \leq C  \big(2^{-\frac{n}{2}}\sum_{i\geq p 2^{n/2}}\|\mathcal{L}_{i,n}(t)\|_2\big)\big(2^{-2nHa}\sum_{i'\geq p 2^{n/2}}\|\mathcal{L}_{i',n}(t)\|_2\big).
\end{eqnarray*}
Thanks to (\ref{local4}), we get
\begin{align*}
 2^{-\frac{n}{2}}\sum_{i \geq p2^{n/2}}\|\mathcal{L}_{i,n}(t) \|_2&\leq C  2^{-n/2}\sum_{i \geq p2^{n/2}}\exp\big(-\frac{(i2^{-n/2})^2}{4t}\big)\\
&\hskip0.4cm + C 2^{-n/2}\sum_{i \geq p2^{n/2}}\exp\big(-\frac{(i2^{-n/2})^2}{8t}\big).
\end{align*}
But, for $k \in \{4,8\},$
\[\displaystyle{2^{-n/2}\sum_{i \geq p2^{n/2}}\exp\big(-\frac{(i2^{-n/2})^2}{kt}\big) \leq  \int_{p-1}^{\infty}\exp\big(\frac{-x^2}{kt}\big) dx  }.\]
On the other hand, since $H>\frac14$, we have
\[
2^{-2nHa}\sum_{i'\geq p 2^{n/2}}\|\mathcal{L}_{i',n}(t)\|_2 \leq 2^{-n(2Ha-\frac12)}2^{-\frac{n}{2}}\sum_{i'\geq p 2^{n/2}}\|\mathcal{L}_{i',n}(t)\|_2 \leq C 2^{-\frac{n}{2}}\sum_{i'\geq p 2^{n/2}}\|\mathcal{L}_{i',n}(t)\|_2 \]
Finally, we deduce that
\begin{equation}
\Omega_{n,p}^{(0,a)}(t) \leq C\bigg(\int_{p-1}^{\infty}\exp\big(\frac{-x^2}{4t}\big) dx + \int_{p-1}^{\infty}\exp\big(\frac{-x^2}{8t}\big) dx\bigg)^2\underset{p \rightarrow \infty}{\longrightarrow} 0, \label{omega1}
\end{equation}
uniformly on $n$.
 
{\bf \underline{For $l\neq 0:$}} By the same arguments that has been used in the proof of (\ref{1, H< 3/4}) and thanks to (\ref{inner product 2}),  the Cauchy-Schwarz inequality and (\ref{local4}), we have
\begin{eqnarray} 
\Omega_{n,p}^{(l,a)}(t) &\leq& C 2^{2nHa-\frac{n}{2}}\bigg(2^{-nH(4a-2l)}\sum_{i,i'\geq p 2^{n/2}} |\langle \delta_{(i+1)2^{-n/2}}, \delta_{(i'+1)2^{-n/2}} \rangle|^l\big|E[\mathcal{L}_{i,n}(t)\mathcal{L}_{i',n}(t) ]\big|\bigg)\notag\\
&\leq& C 2^{2nHa-\frac{n}{2}}2^{-nH(4a-2l)}2^{-nHl}\big(\sum_{i,i'\geq p 2^{n/2}}|\rho{(i-i')}|^l\|\mathcal{L}_{i,n}(t)\|_2\|\mathcal{L}_{i',n}(t)\|_2\big)\notag\\
&\leq& C2^{-nH(2a-l)}\big(2^{-\frac{n}{2}}\sum_{i\geq p 2^{n/2}}\|\mathcal{L}_{i,n}(t)\|_2\big)\big(\sum_{a\in\Z}|\rho{(a)}|^l\big)\notag\\
&\leq& C2^{-\frac{n}{2}}\sum_{i\geq p 2^{n/2}}\|\mathcal{L}_{i,n}(t)\|_2\notag\\
&\leq &C\bigg(\int_{p-1}^{\infty}\exp\big(\frac{-x^2}{4t}\big) dx + \int_{p-1}^{\infty}\exp\big(\frac{-x^2}{8t}\big) dx\bigg)\underset{p \rightarrow \infty}{\longrightarrow} 0, \label{omega2}
\end{eqnarray}
uniformly on $n$, and  we have the fourth inequality because , since $H\leq \frac12\leq 1 -\frac{1}{2l}$, $\sum_{a\in \Z} |\rho(a)|^l< \infty.$ By combining (\ref{omega1}) and (\ref{omega2}), we deduce that (\ref{4', H<3/4}) holds true for $\frac14 < H\leq \frac12$.

\begin{itemize}
\item[(5)] \underline{The convergence in law of $D_{m,n,p}^{(2r)}(t)$ as $n\to \infty$, then $m \to \infty$, then $p \to \infty$\,:}
\end{itemize}
Let us prove that 
\begin{equation}
\big(2^{-\frac{n}{4}}D_{m,n,p}^{(2r)}(t)\big)_{t\geq 0} \overset{f.d.d.}{\longrightarrow}\big( \gamma_{2r}\int_{-\infty}^{+\infty} f(X_s)L_t^s(Y)dW_s \big)_{ t\geq 0}, \label{5, H< 3/4}
\end{equation}
as $n\to \infty$, then $m\to \infty$, then $p\to \infty$, where $\gamma_{2r}$ and $\int_{-\infty}^{+\infty} f(X_s)L_t^s(Y)dW_s$ are defined in the point (3) of Theorem \ref{thm1}.  In fact, using the decomposition (\ref{hermite polynomial pair}), we have
\begin{eqnarray*}
2^{-\frac{n}{4}}D_{m,n,p}^{(2r)}(t)&=& 2^{-\frac{n}{4}}
 \sum_{-p 2^{m/2}+1\leq j\leq p 2^{m/2}}\Delta_{j,m} f(X) L_t^{j2^{-m/2}}(Y) \sum_{i=(j-1)2^{\frac{n-m}{2}}}^{j2^{\frac{n-m}{2}}-1}\big[(X_{i+1}^{(n)} - X_i^{(n)})^{2r}\\
  &&\hskip10cm   - \mu_{2r}\big]\\
  &=& 2^{-\frac{n}{4}}
 \sum_{-p 2^{m/2}+1\leq j\leq p 2^{m/2}}\Delta_{j,m} f(X) L_t^{j2^{-m/2}}(Y) \sum_{i=(j-1)2^{\frac{n-m}{2}}}^{j2^{\frac{n-m}{2}}-1}\sum_{a=1}^r\\
 && b_{2r,a}H_{2a}(X_{i+1}^{(n)} - X_i^{(n)}).
 \end{eqnarray*}
 It was been proved in (3.27) in \cite{RZ1} that
 \begin{equation*}
  \bigg(2^{-n/4}\sum_{i=(j-1)2^{\frac{n-m}{2}}}^{j2^{\frac{n-m}{2}}-1} H_{2a}(X_{i+1}^{(n)} - X_i^{(n)}),\, 1 \leq a \leq r : -p2^{m/2}+1 \leq j \leq p2^{m/2} \bigg) \overset{ \rm law}{\longrightarrow}
  \end{equation*}
  \[\hskip1cm \bigg( \alpha_{2a}(B_{(j+1)2^{-m/2}}^{(a)} - B_{j2^{-m/2}}^{(a)}),\, 1 \leq a \leq r : -p2^{m/2}+1 \leq j \leq p2^{m/2}\bigg)\]
 where $(B^{(1)}, \ldots , B^{(r)})$ is a $r$-dimensional two-sided Brownian motion and $\alpha_{2a}$ is defined in (\ref{alpha}). Since for any $x\in \R$, $E[ X_x H_{2a}(X_{j+1}^{n,\pm} - X_j^{n,\pm})] = 0$ (Hermite polynomials of different orders are orthogonal), and thanks to the independence between $X$ and $Y$, Peccati-Tudor Theorem (see, e.g., \cite[Theorem $6.2.3$]{NP2}) applies and yields
 \begin{equation*}
  \bigg(X_x, Y_y,2^{-n/4}\sum_{i=(j-1)2^{\frac{n-m}{2}}}^{j2^{\frac{n-m}{2}}-1} H_{2a}(X_{i+1}^{(n)} - X_i^{(n)}),\, 1 \leq a \leq r : -p2^{m/2}+1 \leq j \leq p2^{m/2} \bigg)_{x, y \in \R} \overset{ \rm f.d.d.}{\longrightarrow}
  \end{equation*}
  \[\hskip1cm \bigg(X_x, Y_y, \alpha_{2a}(B_{(j+1)2^{-m/2}}^{(a)} - B_{j2^{-m/2}}^{(a)}),\, 1 \leq a \leq r : -p2^{m/2}+1 \leq j \leq p2^{m/2}\bigg)_{x,y \in \R}\]
 where $(B^{(1)}, \ldots , B^{(r)})$ is a $r$-dimensional two-sided Brownian motion independent of $X$ and $Y$. Hence, for any fixed $m$ and $p$, we have
 \begin{equation}
 \big(2^{-\frac{n}{4}}D_{m,n,p}^{(2r)}(t)\big)_{t\geq 0} \underset{n\to \infty}{\overset{f.d.d.}{\longrightarrow}}\gamma_{2r}\bigg(\sum_{-p 2^{m/2}+1\leq j\leq p 2^{m/2}}\Delta_{j,m} f(X) L_t^{j2^{-m/2}}(Y)\big(W_{(j+1)2^{-m/2}} - W_{j2^{-m/2}}\big)\bigg)_{t\geq 0},
 \label{D1}
 \end{equation}
where  $\gamma_{2r}:= \sqrt{\sum_{a=1}^rb_{2r,a}^2 \alpha_{2a}^2}$ and $W$ is a two-sided Brownian motion. Fix $t\geq0$, observe that
\begin{eqnarray}
&&\sum_{-p 2^{m/2}+1\leq j\leq p 2^{m/2}}\Delta_{j,m} f(X) L_t^{j2^{-m/2}}(Y)\big(W_{(j+1)2^{-m/2}} - W_{j2^{-m/2}}\big)\label{D2}\\
&=&\sum_{-p 2^{m/2}+1\leq j\leq p 2^{m/2}} f(X_{j2^{-\frac{m}{2}}}) L_t^{j2^{-m/2}}(Y)\big(W_{(j+1)2^{-m/2}} - W_{j2^{-m/2}}\big) \notag\\
&&+ \sum_{-p 2^{m/2}+1\leq j\leq p 2^{m/2}} \frac12 \big(f(X_{(j+1)2^{-\frac{m}{2}}})-f(X_{j2^{-\frac{m}{2}}})\big) L_t^{j2^{-m/2}}(Y)\big(W_{(j+1)2^{-m/2}} - W_{j2^{-m/2}}\big)\notag\\
&=& \sum_{-p 2^{m/2}+1\leq j\leq p 2^{m/2}} f(X_{j2^{-\frac{m}{2}}}) L_t^{j2^{-m/2}}(Y)\big(W_{(j+1)2^{-m/2}} - W_{j2^{-m/2}}\big) \notag\\
&&+ N_{m,p}(t), \notag
\end{eqnarray} 
with obvious notation at the last line. Since $E[\int_{-\infty}^{+\infty}\big(f(X_s)L^{s}_{t}(Y)\big)^2ds]\leq C\int_{-\infty}^{+\infty}E[(L^{s}_{t}(Y))^2]ds$ $\leq C\int_{-\infty}^{+\infty}\exp{(\frac{-s^2}{2t})}ds < \infty$, where we have the second inequality by the point 1 of Proposition \ref{local time}, and thanks to the independence between $(X,Y)$ and $W$ and the a.s. continuity of $s\to f(X_s)$ and $s\to L_t^{s}(Y)$, we deduce that
\begin{eqnarray}
&&\sum_{-p 2^{m/2}+1\leq j\leq p 2^{m/2}} f(X_{j2^{-\frac{m}{2}}}) L_t^{j2^{-m/2}}(Y)\big(W_{(j+1)2^{-m/2}} - W_{j2^{-m/2}}\big)\notag\\
&&\underset{m\to \infty}{\overset{L^2}{\longrightarrow}}\int_{-p}^{+p}f(X_{s}) L_t^{s}(Y)dW_s \underset{p\to \infty}{\overset{L^2}{\longrightarrow}}\int_{-\infty}^{+\infty}f(X_{s}) L_t^{s}(Y)dW_s. \label{D3}
\end{eqnarray}
Now, let us prove that, for any fixed $p$,
\begin{equation}
N_{m,p}(t) \underset{m\to \infty}{\overset{L^2}{\longrightarrow}}0. \label{D4}
\end{equation} 
In fact, since  $f(X_{(j+1)2^{-\frac{m}{2}}})-f(X_{j2^{-\frac{m}{2}}}) = f'(X_{\theta_j})(X_{(j+1)2^{-\frac{m}{2}}}-X_{j2^{-\frac{m}{2}}})$ where $\theta_j$ is a random real number satisfying $j2^{-\frac{m}{2}}< \theta_j <(j+1)2^{-\frac{m}{2}}$, and thanks to the independence of $X$ , $Y$ and $W$, the independence of the increments of $W$, and the point 1 of Proposition \ref{local time},  we have
\begin{eqnarray*}
 E[(N_{m,p}(t))^2] &=& \frac14\sum_{-p 2^{m/2}+1\leq j,j'\leq p 2^{m/2}} E\big[(f(X_{(j+1)2^{-\frac{m}{2}}})-f(X_{j2^{-\frac{m}{2}}}))\\
&& \times(f(X_{(j'+1)2^{-\frac{m}{2}}})-f(X_{j'2^{-\frac{m}{2}}})) L_t^{j2^{-m/2}}(Y)L_t^{j'2^{-m/2}}(Y)]\\
&& \times E\big[\big(W_{(j+1)2^{-m/2}} - W_{j2^{-m/2}}\big)\big(W_{(j'+1)2^{-m/2}} - W_{j'2^{-m/2}}\big)\big]\\
\end{eqnarray*}
\begin{eqnarray*}
&=& \frac{2^{-\frac{m}{2}}}{4}\sum_{-p 2^{m/2}+1\leq j\leq p 2^{m/2}} E\big[(f'(X_{\theta_j})(X_{(j+1)2^{-\frac{m}{2}}}-X_{j2^{-\frac{m}{2}}}))^2\big] E\big[(L_t^{j2^{-m/2}}(Y))^2\big]\\
&\leq& C 2^{-\frac{m}{2}}\sum_{-p 2^{m/2}+1\leq j\leq p 2^{m/2}} E\big[(X_{(j+1)2^{-\frac{m}{2}}}-X_{j2^{-\frac{m}{2}}})^2\big]\\
&=& C 2^{-mH}2^{-\frac{m}{2}}2p2^{\frac{m}{2}}=Cp2^{-mH} \underset{m\to \infty}{\longrightarrow}0.
\end{eqnarray*} 
Thus (\ref{D4}) holds true. Thanks to (\ref{D1}), (\ref{D2}), (\ref{D3}) and (\ref{D4}), we deduce that (\ref{5, H< 3/4}) holds true.

Finally, by combining (\ref{1, H< 3/4}) with (\ref{2, H< 3/4}), (\ref{3, H< 3/4}), (\ref{4, H< 3/4}) and (\ref{5, H< 3/4}), we deduce that (\ref{fdd thm2}) holds true.

\section{Proof of Lemma \ref{tech-lemma}}

\begin{enumerate}

\item[1.]  We have, $\langle \varepsilon_u^{\otimes q}, \delta_{(j+1)2^{-n/2}}^{\otimes q} \rangle_{\mathscr{H}^{\otimes q}}= \langle \varepsilon_u, \delta_{(j+1)2^{-n/2}} \rangle_\mathscr{H}^q. $ 
Thanks to (\ref{inner product}), we have 
\[
 \langle \varepsilon_u, \delta_{(j+1)2^{-n/2}} \rangle_\mathscr{H} = E\big( X_{u}(X_{(j+1)2^{-n/2}}-X_{j2^{-n/2}})\big).
 \]
Observe that, for all $0 \leq s \leq t$ and $u\in\R$,
 \[ 
 E\big( X_{u}(X_{t}-X_{s})\big) = \frac{1}{2}\big( t^{2H} -s^{2H}\big) + \frac{1}{2}\big(|s-u|^{2H} -|t-u|^{2H}\big).
 \]
Since for $H \leq 1/2$  one has $|b^{2H} - a^{2H}|\leq |b-a|^{2H}$ for any $a, b \in \mathbb{R}_{+}$,  we immediately deduce (\ref{0}).

\item[2.] By (\ref{inner product}), for all $j, j'\in \{0, \ldots,  \lfloor 2^{n/2}t\rfloor -1\}$,
\begin{eqnarray}
&& |\langle \varepsilon_{j2^{-n/2}}, \delta_{(j'+1)2^{-n/2}} \rangle_\mathscr{H} |=\big|E[X_{j2^{-n/2}}(X_{(j'+1)2^{-n/2}}-X_{j'2^{-n/2}})]\big|\notag\\
&=& \big|2^{-nH -1}( |j'+1|^{2H} - |j'|^{2H}) + 2^{-nH-1}( |j-j'|^{2H} - |j-j'-1|^{2H}) \big|\notag\\
&\leq & 2^{-nH-1}\big| |j'+1|^{2H} - |j'|^{2H}\big| + 2^{-nH-1}\big| |j-j'|^{2H} - |j-j'-1|^{2H}\big| \label{lemme-1}.
\end{eqnarray}
We consider the function $f: [a,b] \to \R$ defined by
\[
f(x)= |x|^{2H}.
\]
Applying the mean value theorem to $f$, we have that
\begin{equation}
||b|^{2H} - |a|^{2H}| \leq 2H (|a|\vee|b|)^{2H-1}|b-a| \leq 2 (|a|\vee|b|)^{2H-1}|b-a|. \label{mean}
\end{equation}
We deduce from (\ref{mean}) that

$2^{-nH-1}||j'+1|^{2H} - |j'|^{2H}| \leq 2^{-nH}|j'+1|^{2H-1}\leq 2^{-nH}| \lfloor 2^{n/2}t\rfloor |^{2H-1}\leq 2^{-n/2}t^{2H-1}$,\\
similarly we have,\\
$2^{-nH-1}||j-j'|^{2H} - |j-j'-1|^{2H}| \leq 2^{-nH}| \lfloor 2^{n/2}t\rfloor |^{2H-1}\leq 2^{-n/2}t^{2H-1}$.

Combining the last two inequalities with (\ref{lemme-1}), and since $\langle \varepsilon_{j2^{-n/2}}^{\otimes q}, \delta_{(j'+1)2^{-n/2}}^{\otimes q} \rangle_{\mathscr{H}^{\otimes q}} = \langle \varepsilon_{j2^{-n/2}}, \delta_{(j'+1)2^{-n/2}} \rangle_\mathscr{H}^q$, we deduce that (\ref{1}) holds true. The proof of (\ref{1'}) may be done similarly.

\item [3.] By (\ref{inner product}) we have
\begin{eqnarray*}
&&|\langle \delta_{(k+1)2^{-n/2}} ; \delta_{(l+1)2^{-n/2}}\rangle_\mathscr{H}|^r = \big|E[(X_{(k+1)2^{-n/2}}-X_{k2^{-n/2}})(X_{(l+1)2^{-n/2}}-X_{l2^{-n/2}})] \big|^r\\
& =& \big|2^{-nH-1}(|k-l+1|^{2H} + |k-l-1|^{2H} -2|k-l|^{2H})\big|^r = 2^{-nrH}|\rho(k-l)|^r,
\end{eqnarray*}
where we have the last equality by the notation (\ref{rho}). So, we deduce that
\begin{eqnarray}
&&\sum_{k,l=0}^{\lfloor 2^{n/2} t \rfloor - 1}|\langle \delta_{(k+1)2^{-n/2}} ; \delta_{(l+1)2^{-n/2}}\rangle_\mathscr{H}|^r = 2^{-nrH} \sum_{k,l=0}^{\lfloor 2^{n/2} t \rfloor - 1} |\rho(k-l)|^r \notag\\
&=& 2^{-nrH} \sum_{k=0}^{\lfloor 2^{n/2} t \rfloor - 1}\sum_{p=k -\lfloor 2^{n/2} t \rfloor +1}^{k}|\rho(p)|^r \notag\\
& =& 2^{-nrH} \sum_{p=1 -\lfloor 2^{n/2} t \rfloor}^{\lfloor 2^{n/2} t \rfloor -1}|\rho(p)|^r \big((p+ \lfloor 2^{n/2} t \rfloor)\wedge \lfloor 2^{n/2} t \rfloor - p \vee 0\big)\notag\\
&\leq& 2^{-nrH}\lfloor 2^{n/2} t \rfloor \sum_{p=1 -\lfloor 2^{n/2} t \rfloor}^{\lfloor 2^{n/2} t \rfloor -1}|\rho(p)|^r \leq 2^{n(\frac12 -rH)}t\sum_{p=1 -\lfloor 2^{n/2} t \rfloor}^{\lfloor 2^{n/2} t \rfloor -1}|\rho(p)|^r, \label{lemme-2}
\end{eqnarray}
where we have the second equality by the change of variable $p=k-l$ and the third equality by a Fubini argument. Observe that $|\rho(p)|^r \sim (H(2H-1))^r p^{(2H-2)r}$ as $p \to +\infty$. So, we deduce that 
\begin{enumerate}

\item \underline{if $H< 1 - \frac{1}{2r}:$}  $\sum_{p \in\Z}|\rho(p)|^r < \infty$, by combining this fact with (\ref{lemme-2}) we deduce that (\ref{2}) holds true.

\item \underline{If $H = 1- \frac{1}{2r}:$} $|\rho(p)|^r \sim \frac{(H(2H-1))^r}{p}$ as $p \to +\infty$. So, we deduce that there exists a constant $C_{H,r} >0$ independent of $n$ and $t$ such that for all integer $n \geq 1$ and all $t \in \R_+$
\begin{eqnarray*}
&&\sum_{p=1 -\lfloor 2^{n/2} t \rfloor}^{\lfloor 2^{n/2} t \rfloor -1}|\rho(p)|^r \leq C_{H,r} \big( 1+ \sum_{p=2}^{\lfloor 2^{n/2} t \rfloor } \frac{1}{p}\big) \leq C_{H,r} \big(1+ \int_1^{2^{n/2}t} \frac{1}{x}dx\big)\\
&=& C_{H,r} \big(1+ \frac{n\log(2)}{2} + \log(t)\big)\leq C_{H,r} \big(1+ n + t\big).
\end{eqnarray*}
By combining this last inequality with (\ref{lemme-2}) we deduce that (\ref{3}) holds true.

\item \underline{If $H> 1- \frac{1}{2r}:$} $|\rho(p)|^r \sim \frac{(H(2H-1))^r}{p^{(2-2H)r}}$ as $p \to +\infty$ where $0 <(2-2H)r<1$.  So, we deduce that there exists a constant $K_{H,r} >0$ independent of $n$ and $t$ such that for all integer $n \geq 1$ and all $t \in \R_+$
\begin{eqnarray*}
&&\sum_{p=1 -\lfloor 2^{n/2} t \rfloor}^{\lfloor 2^{n/2} t \rfloor -1}|\rho(p)|^r \leq K_{H,r} \big( 1+ \sum_{p=1}^{\lfloor 2^{n/2} t \rfloor } \frac{1}{p^{(2-2H)r}}\big) \leq K_{H,r} \big(1+ \int_0^{2^{n/2}t} \frac{1}{x^{(2-2H)r}}dx\big)\\
&=&K_{H,r}\big( 1+ \frac{2^{\frac{n}{2}(1-(2-2H)r)} t^{1-(2-2H)r}}{1-(2-2H)r}\big) \leq C_{H,r}\big(1+2^{\frac{n}{2}(1-(2-2H)r)} t^{1-(2-2H)r}\big),
\end{eqnarray*}
where $C_{H,r}= K_{H,r} \vee \frac{K_{H,r}}{1-(2-2H)r}$. 
By combining the last inequality with (\ref{lemme-2}) we deduce that (\ref{4}) holds true.
\end{enumerate}

\item[4.] As it has been proved in (\ref{lemme-1}), we have
\begin{eqnarray*}
&& |\langle \varepsilon_{k2^{-n/2}}, \delta_{(l+1)2^{-n/2}} \rangle_\mathscr{H} |=\big|E[X_{k2^{-n/2}}(X_{(l+1)2^{-n/2}}-X_{l2^{-n/2}})]\big|\\
&\leq & 2^{-nH-1}\big| |l+1|^{2H} - |l|^{2H}\big| + 2^{-nH-1}\big| |k-l|^{2H} - |k-l-1|^{2H}\big|,
\end{eqnarray*}
so, by a telescoping argument we get
\begin{eqnarray}
&&\sum_{k,l=0}^{\lfloor 2^{n/2} t \rfloor - 1}|\langle \varepsilon_{k2^{-n/2}} ; \delta_{(l+1)2^{-n/2}}\rangle_\mathscr{H}|\notag\\
& \leq& 2^{\frac{n}{2}-1} t^{2H+1} + 2^{-nH -1}\sum_{k,l=0}^{\lfloor 2^{n/2} t \rfloor - 1}\big||k-l|^{2H} - |k-l-1|^{2H}\big|, \label{lemme-3}
\end{eqnarray}
by using the change of variable $p=k-l$ and a Fubini argument, among other things that has been used in the previous proof, we deduce that 
\[
2^{-nH -1}\sum_{k,l=0}^{\lfloor 2^{n/2} t \rfloor - 1}\big||k-l|^{2H} - |k-l-1|^{2H}\big| \leq  2^{\frac{n}{2}} t^{2H+1}.
\]
By combining this last inequality with (\ref{lemme-3}) we deduce that (\ref{5}) holds true. The proof of (\ref{6}) may be done similarly.
\end{enumerate}

\bigskip

{\bf Acknowledgments}. We are thankful to the referees for their careful reading of the original manuscript and for a number of suggestions. The financial support of the DFG (German Science Foundations) Research Training Group 2131 is gratefully acknowledged.

\end{document}